\newtheorem{theorem}{Theorem}[section]
\newtheorem{lemma}[theorem]{Lemma}
\newtheorem{corollary}[theorem]{Corollary}
\newtheorem{proposition}[theorem]{Proposition}
\theoremstyle{definition}
\newtheorem{definition}[theorem]{Definition}
\newtheorem{example}[theorem]{Example}
\theoremstyle{remark}
\newtheorem{remark}[theorem]{Remark}
\numberwithin{equation}{section}
\newcommand{\RR}{\mathbb R}
\newcommand{\f}[1]{\mathbf{#1}}
\newcommand{\Quad}{\mathcal{Q}} 
\newcommand{\Mesh}{\mathcal{M}} 
\newcommand{\globalspace}{\mathcal{S}} 
\newcommand{\Facep}{\mathcal{F}} 
\newcommand{\Vertices}{\mathcal{V}} 
\newcommand{\Edge}{\mathcal{E}} 
\newcommand{\Q}{Q} 
\newcommand{\vertex}{{\f v}} 
\newcommand{\edge}{\varepsilon} 
\newcommand{\midp}{{\f m}} 
\newcommand{\facep}{{\f x}} 
\newcommand{\edgenorm}{\mathbf{n}_{\edge}} 
\newcommand{\edgenormi}{\mathbf{n}_{\edge_i}} 
\newcommand{\hatquad}{\widehat{Q}} 
\newcommand{\param}{\mathbf{F}} 
\newcommand{\func}{\varphi} 
\newcommand{\dx}{\partial_1} 
\newcommand{\dy}{\partial_2} 
\newcommand{\dn}{\partial_{\edgenorm}} 
\newcommand{\dni}{\partial_{\edgenormi}} 
\newcommand{\shaperegularityparameter}{\rho}
\newcommand{\Space}{P} 
\newcommand{\Dual}{\Lambda} 
\newcommand{\basisf}{\beta} 
\newcommand{\basisfn}[2]{\basisf_{#1,#2}} 
\newcommand{\basisfhat}{\widehat{\basisf}} 
\renewcommand{\arraystretch}{1.2}
\begin{document}

\title{A family of $C^1$ quadrilateral finite elements}

\author{Mario Kapl}
\address{Johann Radon Institute for Computational and Applied Mathematics, Austrian Academy of Sciences, Austria}
\curraddr{}
\email{mario.kapl@ricam.oeaw.ac.at}
\thanks{}

\author{Giancarlo Sangalli}
\address{Dipartimento di Matematica ``F. Casorati'', Universit\`a degli Studi di Pavia, Italy; Istituto di Matematica Applicata e Tecnologie Informatiche ``E. Magenes'' (CNR), Italy}
\curraddr{}
\email{giancarlo.sangalli@unipv.it}
\thanks{}

\author{Thomas Takacs}
\address{Institute of Applied Geometry, Johannes Kepler University Linz, Austria}
\curraddr{}
\email{thomas.takacs@jku.at}
\thanks{}

\subjclass[2010]{Primary 65N30, secondary 65D07}

\date{\today}

\dedicatory{}

\begin{abstract}
We present a novel family of $C^1$ quadrilateral finite elements, which define global $C^1$ spaces over a general quadrilateral mesh with vertices of arbitrary valency. The elements  extend the construction by Brenner and Sung~\cite{BrSu05}, which is based on polynomial elements of tensor-product degree $p\geq 6$, to all degrees~$p \geq 3$. Thus, we call the family of $C^1$ finite elements \emph{Brenner-Sung quadrilaterals}. The proposed $C^1$ quadrilateral can be seen as a special case of the Argyris isogeometric element of~\cite{KaSaTa19}. The quadrilateral elements possess similar degrees of freedom as the classical Argyris triangles~\cite{ArFrSc68}. Just as for the Argyris triangle, we additionally impose $C^2$ continuity at the vertices. In this paper we focus on the lower degree cases, not covered in~\cite{BrSu05}, that may be desirable  for their lower computational cost and better conditioning of the basis: We  consider  indeed the polynomial quadrilateral of (bi-)degree~$5$, and  the  polynomial degrees~$p=3$ and $p=4$ by employing a splitting into $3\times3$ or $2\times2$ polynomial pieces, respectively. 

The proposed elements reproduce polynomials of total degree $p$. We show that the space provides optimal approximation order. Due to the interpolation properties, the error bounds are local on each element. In addition, we describe the construction of a simple, local basis and give for $p\in\{3,4,5\}$ explicit formulas for the B\'{e}zier or B-spline coefficients of the basis functions. Numerical experiments by solving the biharmonic equation demonstrate the potential of the proposed $C^1$ quadrilateral finite element for the numerical analysis of fourth order problems, also indicating that (for $p=5$) the proposed element  performs comparable or in general even better than the Argyris triangle with respect to the number of degrees of freedom.
\end{abstract}
 
\maketitle

\section{Introduction} \label{sec:introduction}

Using a standard Galerkin approach for the numerical analysis of high order problems, globally smooth function spaces are needed. E.g., for solving fourth order partial differential equations (PDEs) via the finite element method (FEM), $C^1$ finite element spaces are required. In the case of triangular meshes, two well-known examples are the Argyris element~\cite{ArFrSc68} and the Bell element~\cite{Be69}. Both elements require polynomials of degree~$p \geq 5$, and are additionally $C^2$ at the vertices. While the normal derivative along an edge is of degree~$p-1$ for the Argyris element, its degree reduces to $p-2$ for the Bell element. This leads for instance in case of polynomial degree~$p=5$ to the fact that the Argyris triangular space possesses six degrees of freedom for each vertex and one degree of freedom for each edge, while the Bell triangular space just has six degrees of freedom for each vertex and no additional degrees of freedom for the edges. For more details on the Argyris and Bell triangular element as well as on other $C^1$ triangular finite elements, we refer to the books~\cite{BrSc07,Ci02}. $C^1$ finite element spaces of lower polynomial degree are in general based on splines, which are constructed over general triangulations, see ~\cite{LaSc07}.

The design of $C^1$ finite elements over quadrilateral meshes is in general more challenging compared to the case of triangular meshes, in particular with respect to the selection of the degrees of freedom. Examples of $C^1$ quadrilateral elements are \cite{BeMa14,BoFoSc65,BrSu05,Ma01}. The Bogner-Fox-Schmit element~\cite{BoFoSc65} is a simple bivariate Hermite type $C^1$ construction which works for low polynomial degrees such as $p=3$, but is limited to tensor-product meshes. In contrast, the $C^1$ elements~\cite{BeMa14,BrSu05,Ma01} are applicable to more general quadrilateral meshes, but require a polynomial degree $p \geq 6$ in case of~\cite{BrSu05} and a polynomial degree~$p\geq5$ (for some specific settings just $p=4$) in case of~\cite{BeMa14,Ma01}. The degrees of freedom for the finite element space~\cite{BrSu05} are selected similar to the Argyris triangular finite element space~\cite{ArFrSc68} by enforcing additionally $C^2$-continuity at the vertices.

In contrast, the functions in~\cite{BeMa14,Ma01} are just $C^1$ at the vertices and the degrees of freedom are defined by means of the concept of minimal determining sets (cf. \cite{LaSc07}), which is a common strategy for the construction of $C^1$ splines over triangular meshes, see also~\cite{LaSc07}. A different but related problem is the construction of $C^1$ function spaces over general quadrilateral meshes for the design of surfaces, such as in~\cite{GrMa87,Pe90,Pe91,Re95}. The methods are based on the concept of geometric continuity~\cite{Pe02}, which is a well-known tool in computer aided geometric design for generating smooth complex surfaces. 

An alternative to FEM is the use of isogeometric analysis (IgA), which was introduced in~\cite{HuCoBa05}, and employs the same spline function space for describing the physical domain of interest and for representing the solution of the considered PDE, see e.g.~\cite{CoHuBa09,HuCoBa05} for more details. In case of a single patch geometry, this allows the direct discretization of fourth order PDEs~\cite{TaDeQu14}, such as the Kirchhoff-Love shells, e.g.~\cite{KiBlLi09,KiBaHs10}, the Navier-Stokes-Korteweg equation, e.g.~\cite{GoHuNo10}, problems of strain gradient elasticity, e.g.~\cite{gradientElast2011,KhakaloNiiranenC1}, or the Cahn-Hilliard equation, e.g.~\cite{GoCaBa08}, by just using $C^1$ splines. In case of multi-patch geometries with possibly extraordinary vertices, i.e. vertices with a patch valency different to four, the design of smooth spline spaces is challenging and is the topic of current research. 

Depending on the used type of parametrizations for the single patches of the given unstructured quadrilateral mesh, different techniques for the design of a $C^1$ spline space over this mesh have been developed. Possible examples in the case of planar, unstructured quadrilateral meshes are to use $C^1$ multi-patch parametrizations with a singularity at an extraordinary vertex, e.g.~\cite{NgPe16,ToSpHu17}, multi-patch parametrizations which are $C^1$ except in the vicinity of an extraordinary vertex, e.g.~\cite{KaNgPe17,KaPe17,KaPe18,NgKaPe15}, or multi-patch parametrizations which have to be just $C^0$ at all interfaces, e.g.~\cite{BlMoVi17,ChAnRa18,ChAnRa19,CoSaTa16,KaBuBeJu16,KaSaTa17,KaSaTa17b,KaSaTa19,KaViJu15,MoViVi16}. For more details about existing $C^1$ constructions for unstructured quadrilateral meshes, we refer to the recent survey article~\cite{KaSaTa19b}. Beside this, in~\cite{BuJuMa15,SaTaVa16,ScThEv14}, different approaches for the construction of smooth spline functions of degree~$p$ are presented, which are $C^s$ ($1 \leq s \leq p-1$) everywhere, except in the vicinity of an extraordinary vertex, where they are just $C^0$. 

In this work, we present a family of $C^1$ quadrilateral finite elements, that are the low-degree (for $p\in\{3,4,5\}$) counterpart of the quadrilateral finite elements proposed in \cite{BrSu05} (for $p\geq6$) by Brenner and Sung. The interest for the low-degree case  is that the computational cost for the linear system formation (due to numerical quadrature) and solution (that depends on the matrix conditioning) is more favorable. We refer to these elements (the ones in \cite{BrSu05}  and the new ones) as {\em Brenner-Sung (BS) quadrilaterals}. These quadrilateral elements, in turn, are included in the isogeometric family of~\cite{KaSaTa19}, and, indeed, the lower degrees~$p\in\{3,4\}$ construction is based on tensor-product splines.

The BS quadrilateral possesses similar degrees of freedom as the classical $C^1$ Argyris triangle~\cite{ArFrSc68}. An advantage of the quadrilateral construction over the triangular one is the simpler extension to the lower polynomial degrees~$p=3$ and $p=4$ by just using tensor-product spline without the need of special splits for the mesh elements.
    
While in~\cite{KaSaTa19} the optimal approximation properties of the $C^1$ isogeometric spline space is just numerically shown, in this work the optimal approximation order of the BS quadrilateral space is proven. A further extension to~\cite{KaSaTa19} is that for some particular cases the B\'{e}zier or spline coefficients of the basis functions are explicitly given by simple formulas. Several numerical tests of solving the biharmonic equation also show the potential of the BS quadrilateral space for the numerical analysis of fourth order PDEs.

The outline of this paper is as follows. Section~\ref{sec:mesh} introduces the quadrilateral mesh which will be used throughout the paper. In Section~\ref{sec:Argyris}, the construction of the BS quadrilateral is described, focusing first on the  bi-quintic polynomials, and then generalizing to  splines, which allow the use of the lower polynomial degrees~$p=3$ and $p=4$. Section~\ref{sec:Argyris} also discusses the connection of the BS  quadrilateral with two well-known triangular finite elements, namely with the Argyris triangle~\cite{ArFrSc68} and with the Bell triangle~\cite{Be69}. In Section~\ref{sec:gs-appr-prop} we analyze the approximation properties of the BS  quadrilateral space. Then, Sections~\ref{sec:polynomial-representation} and~\ref{sec:Argyris_spline} describe the design of local basis functions of the BS quadrilateral space for the case of polynomials and its extension for the case of splines, respectively, giving for the low-degree $p\in\{3,4,5\}$ cases the  explicit  B\'{e}zier and spline coefficients of the basis functions. The isoparametric extension of the BS quadrilateral, and its relation to the isogeometric element of~\cite{KaSaTa19}, is briefly discussed in Section~\ref{sec:extension}. Finally, we present in Section~\ref{sec:examples} numerical benchmarks on the biharmonic equation with different quadrilateral meshes, and conclude the paper in Section~\ref{sec:conclusion}.

\section{Quadrilateral mesh} \label{sec:mesh}

We consider planar domains that allow meshing by quadrilaterals. Note that a generalization to domains with curved boundaries is possible with some additional care. We refer the reader to~\cite{BeMa14,KaSaTa17b}, where such discretizations were developed, see also Section \ref{sec:extension}.

Let $\Omega\subset\RR^2$ be an open, planar and connected region, which allows a quadrangulation, as defined below. This is the case if the boundary is piecewise linear, including all inner boundaries, if $\Omega$ is not simply connected. The coordinates in physical space are given as $(x_1,x_2)$. A quadrilateral mesh is a tuple
\begin{equation*}
 \Mesh = (\Quad,\Edge,\Vertices),
\end{equation*}
consisting of a set of elements $\Quad$, edges $\Edge$ and vertices $\Vertices$ satisfying the following properties.
\begin{itemize}
 \item Each vertex is a point in the plane, that is $\Vertices \subset \mathbb{R}^2$. 
 \item Each edge $\edge\in \Edge$ is an open segment, and there exist two vertices $\vertex_1,\vertex_2 \in \Vertices$ such that
\begin{equation*}
 \edge = \{ (1-s)\, \vertex_1 + s\,\vertex_2 : s\in\left]0,1\right[ \}.
\end{equation*}
 \item Each element $\Q \in \Quad$ is a convex, non-empty, open
   quadrilateral; there exist four vertices $\vertex_1, \ldots, \vertex_4 \in
   \Vertices$, four edges $\edge_1, \ldots, 
 \edge_4 \in \Edge$, with edge $\edge_i$ connecting $\vertex_{i}$ with
 $\vertex_{i+1}$ (modulo $4$), given in counter-clockwise order; the
 element admits a parametrization by a $\param_{\Q} : \hatquad \rightarrow
 \overline{\Q}$ which is bilinear on
 $\hatquad = \left[0,1\right]^2$, precisely:
 \begin{equation}
   \label{eq:bilinear-parametrization}
   \param_{\Q}(\xi_1,\xi_2) = (1-\xi_1)(1-\xi_2) \, \vertex_1 + \xi_1 (1-\xi_2) \, \vertex_2 + \xi_1 \xi_2 \, \vertex_3 + (1-\xi_1) \xi_2 \, \vertex_4.
 \end{equation}
 See Fig.~\ref{fig:patch} for a visualization.
\item It holds
\begin{equation*}
 \overline\Omega = \bigcup_{\Q\in\Quad} \overline{\Q}
\end{equation*}
and all intersections of different mesh elements are empty, i.e., for all $X,X'\in\Quad\cup\Edge\cup\Vertices$, with $X\neq X'$, we have $X\cap X'=\emptyset$.
\end{itemize}
 
\begin{figure}[htb]
\centering
\resizebox{0.7\textwidth}{!}{
 \begin{tikzpicture}
  \coordinate(A) at (0,0); \coordinate(B) at (2.5,-0.25); \coordinate(C) at (-0.3,2.7); \coordinate(D) at (3,3); 
  \draw[thick] (A) -- (B); \draw[thick] (C) -- (D);  \draw[thick] (A) -- (C); \draw[thick] (B) -- (D);
  
  \draw[thick] (-3,0) -- (-3,2.5) -- (-5.5,2.5) -- (-5.5,0) -- (-3,0);
  \draw[->] (-5.3,0.2) -- (-4.3,0.2);
  \draw[->] (-5.3,0.2) -- (-5.3,1.2);
  \node at (-4.3,0.5) {$\xi_1$};
  \node at (-5,1.2) {$\xi_2$};
  
  \node at (-4.25,1.25) {$\hatquad$};
  \node at (-1.75,1.6) {$\param_{\Q}$};
  \draw[thick,->] (-2.5,1.25) -- (-1,1.25);
  \node at (1.25,1.25) {$\Q$};
  
  \fill (A) circle (2pt); \fill (B) circle (2pt); \fill (C) circle (2pt); \fill (D) circle (2pt);
  \node[left] at (A) {$\vertex_1$};
  \node[right] at (B) {$\vertex_2$};
  \node[right] at (D) {$\vertex_3$};
  \node[left] at (C) {$\vertex_4$};
  
  \draw[->] (0.1,0.1) -- (1.1,0);
  \draw[->] (0.1,0.1) -- (-0.011,1.1);
  \node at (1.0,0.28) {$\xi_1$};
  \node at (0.28,1.0) {$\xi_2$};
  
  \node at (1.25,-0.3) {$\edge_1$};
  \node at (1.35,3) {$\edge_3$};
  \node at (-0.375,1.35) {$\edge_4$};
  \node at (3,1.35) {$\edge_2$};
  \end{tikzpicture}
 }
\caption{Visualization of the mapping $\param_\Q$ for a quadrilateral~$\Q$, with vertices, edges, parameter domain and local coordinates.}
\label{fig:patch}
\end{figure}
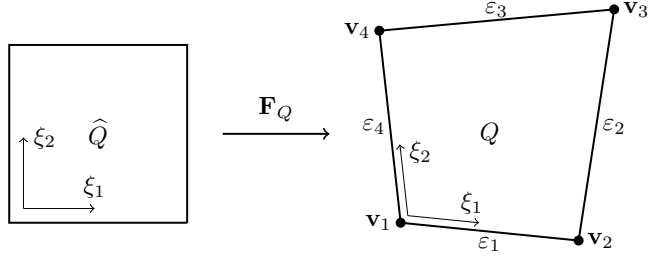

The last condition means that there are no hanging vertices in the quadrilateral mesh, i.e., all neighboring quadrilaterals share an entire edge or a vertex in their closure.

Given a $\Q\in\Quad$, we introduce the following notation: We denote by 
\begin{equation*}
 \f{t}^{(i)} = (t_1^{(i)},t_2^{(i)})^T = \vertex_{i+1} - \vertex_i
\end{equation*}
the vector corresponding to the edge $\edge_i$, and define
\begin{equation*}
 a^{(i)}=\det (\f{t}^{(i-1)}, \f{t}^{(i)} ),
\end{equation*}
with indices modulo $4$. Furthermore $h_{\edge_i} = \| \f{t}^{(i)}\|$ denotes the length of the corresponding edge $\edge_i$, $h_\Q = \max_{i}h_{\edge_i}$, and $\shaperegularityparameter_{\Q} $ its minimum angle defined as follows: If $\Q$ is a degenerate quadrilateral (that is a triangle) then $\shaperegularityparameter_{\Q} =0$, otherwise $\shaperegularityparameter_{\Q} $ is the minimum of the angles of the four triangles that are formed by the edges and the diagonals of
$\Q$. We assume however that each $\Q\in\Quad$ is non-degenerate, indeed the following holds.
\begin{proposition}\label{prop:shape-regularity}
 For each $\Q\in\Quad$, for each $i=1,\ldots,4$,
  \begin{equation}\label{eq:internal-angle}
    2 \shaperegularityparameter_{\Q} \leq \angle(\f t^{(i)},\f
    t^{(i-1)}) \leq \pi -  2 \shaperegularityparameter_{\Q} ,
  \end{equation}
 and 
  \begin{equation}\label{eq:edge-length}
c_1 (\shaperegularityparameter_{\Q})\,     h_\Q  \leq  h_{\edge_i},
  \end{equation}
 furthermore, for all $ (\xi_1,\xi_2) \in \hatquad$,
  \begin{equation}\label{eq:det_jacobian_F}
  c_2 (\shaperegularityparameter_{\Q})\,
h^2_\Q  \leq  \det(\nabla
    \param_\Q  (\xi_1,\xi_2)),
  \end{equation}
  where $c_1 (\shaperegularityparameter_{\Q}),c_2 (\shaperegularityparameter_{\Q}) \in\mathbb{R}$ are constants that depend only on $\shaperegularityparameter_{\Q}$ and are strictly positive for $\shaperegularityparameter_{\Q}>0$.
 \end{proposition}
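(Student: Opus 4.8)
The plan is to prove the three estimates in the order stated, since \eqref{eq:det_jacobian_F} will build on the first two. Throughout, let $\theta_i$ denote the interior angle of $\Q$ at the vertex $\vertex_i$. Because $\f t^{(i-1)}=\vertex_i-\vertex_{i-1}$ points \emph{into} $\vertex_i$ while $\f t^{(i)}=\vertex_{i+1}-\vertex_i$ points \emph{out of} it, we have $\angle(\f t^{(i)},\f t^{(i-1)})=\pi-\theta_i$, so \eqref{eq:internal-angle} is equivalent to the two-sided bound $2\shaperegularityparameter_\Q\le\theta_i\le\pi-2\shaperegularityparameter_\Q$ on the interior angles. I would derive these from the definition of $\shaperegularityparameter_\Q$ as the minimum angle of the four triangles obtained by cutting $\Q$ with each of its two diagonals separately (two triangles per diagonal). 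For the lower bound, the diagonal through $\vertex_i$ splits $\theta_i$ into the two angles at $\vertex_i$ of the two triangles it creates, each $\ge\shaperegularityparameter_\Q$, whence $\theta_i\ge 2\shaperegularityparameter_\Q$. For the upper bound, $\theta_i$ is one full angle of the triangle $\vertex_{i-1}\vertex_i\vertex_{i+1}$ cut off by the other diagonal $\vertex_{i-1}\vertex_{i+1}$ (indices modulo $4$), whose remaining two angles are each $\ge\shaperegularityparameter_\Q$, so $\theta_i\le\pi-2\shaperegularityparameter_\Q$.

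For \eqref{eq:edge-length} I would compare two adjacent edges via the law of sines in the triangle $\vertex_i\vertex_{i+1}\vertex_{i+2}$: the sides $h_{\edge_i}$ and $h_{\edge_{i+1}}$ are opposite to two of its angles, both lying in $[\shaperegularityparameter_\Q,\pi-2\shaperegularityparameter_\Q]$ by the previous step, an interval on which $\sin$ is bounded below by $\sin\shaperegularityparameter_\Q$ and above by $1$. Hence $h_{\edge_i}/h_{\edge_{i+1}}\in[\sin\shaperegularityparameter_\Q,\,1/\sin\shaperegularityparameter_\Q]$, and chaining this comparison across the at most two edges separating $\edge_i$ from a longest edge yields $h_{\edge_i}\ge(\sin\shaperegularityparameter_\Q)^2\,h_\Q$, so one may take $c_1(\shaperegularityparameter_\Q)=(\sin\shaperegularityparameter_\Q)^2$.

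The estimate \eqref{eq:det_jacobian_F} is the heart of the matter. Differentiating \eqref{eq:bilinear-parametrization} gives $\partial_{\xi_1}\param_\Q=(1-\xi_2)\f t^{(1)}-\xi_2\f t^{(3)}$ and $\partial_{\xi_2}\param_\Q=\xi_1\f t^{(2)}-(1-\xi_1)\f t^{(4)}$. Expanding the $2\times2$ determinant by bilinearity and rewriting the four resulting cross terms with $a^{(i)}=\det(\f t^{(i-1)},\f t^{(i)})$ and the antisymmetry of the determinant, I expect the clean identity
\[
\det(\nabla\param_\Q(\xi_1,\xi_2))=(1-\xi_1)(1-\xi_2)\,a^{(1)}+\xi_1(1-\xi_2)\,a^{(2)}+\xi_1\xi_2\,a^{(3)}+(1-\xi_1)\xi_2\,a^{(4)},
\]
that is, the Jacobian determinant is precisely the bilinear interpolant on $\hatquad$ of the four corner values $a^{(i)}$. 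Since the four weights are nonnegative and sum to one, this gives the pointwise bound $\det(\nabla\param_\Q)\ge\min_i a^{(i)}$, reducing an estimate over all of $\hatquad$ to the four corners. Writing $a^{(i)}=h_{\edge_{i-1}}h_{\edge_i}\sin\angle(\f t^{(i)},\f t^{(i-1)})$ and inserting \eqref{eq:internal-angle} (so that $\sin\angle\ge\sin 2\shaperegularityparameter_\Q$) together with \eqref{eq:edge-length} then yields $a^{(i)}\ge c_1(\shaperegularityparameter_\Q)^2\sin(2\shaperegularityparameter_\Q)\,h_\Q^2$, so one may take $c_2(\shaperegularityparameter_\Q)=c_1(\shaperegularityparameter_\Q)^2\sin(2\shaperegularityparameter_\Q)$.

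The computations are elementary; the real content is twofold. The first point is the bookkeeping that matches the interior angles of $\Q$ to the angles of the four diagonal triangles in just the right way to produce the factor $2\shaperegularityparameter_\Q$ (rather than $\shaperegularityparameter_\Q$) in \eqref{eq:internal-angle}. The second, and the step I expect to be the crux, is recognizing the Jacobian determinant as the bilinear interpolant of the turning determinants $a^{(i)}$: this is what converts a lower bound over the whole square into four corner inequalities. Positivity of $c_1(\shaperegularityparameter_\Q)$ and $c_2(\shaperegularityparameter_\Q)$ for $\shaperegularityparameter_\Q>0$ is then immediate, since both are built from $\sin\shaperegularityparameter_\Q$ and $\sin 2\shaperegularityparameter_\Q$.
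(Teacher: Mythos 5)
Your proof is correct and takes essentially the same route as the paper: the angle bounds come from the four diagonal triangles, the edge-length bound from chaining the adjacent-edge comparison twice to get $c_1(\shaperegularityparameter_{\Q})=\sin^2(\shaperegularityparameter_{\Q})$, and the Jacobian bound from reducing to the corner values $a^{(i)}$ and writing $a^{(i)} = h_{\edge_{i-1}}h_{\edge_i}\sin(\angle(\f t^{(i)},\f t^{(i-1)}))$. Your explicit identity expressing $\det(\nabla\param_\Q)$ as the bilinear interpolant of the $a^{(i)}$ is just a sharper way of stating the paper's observation that a bilinear polynomial attains its extrema at the vertices of $\hatquad$, and your law-of-sines justification fills in the one-step edge inequality that the paper asserts without proof.
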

\begin{proof}
Considering the split of $\Q$ into the four triangles formed by the edges and the diagonals, the bounds \eqref{eq:internal-angle} on $\angle(\f t^{(i)},\f t^{(i-1)})$ are straightforward. We also have, for $i\in\{1,2,3,4\}$ (modulo $4$),
\begin{equation*}
  \sin (\shaperegularityparameter_{\Q})     h_{\edge_{i+1}} \leq
  h_{\edge_i}\text{ and } \sin (\shaperegularityparameter_{\Q})     h_{\edge_{i-1}} \leq  h_{\edge_i},
\end{equation*}
repeating the same argument twice
\begin{equation*}
  \sin^2 (\shaperegularityparameter_{\Q})     h_{\edge_{i+2}} \leq  h_{\edge_i},
\end{equation*}
therefore, for all $j\in\{1,2,3,4\}$
\begin{equation*}
  \sin^2 (\shaperegularityparameter_{\Q})     h_{\edge_{j}} \leq  h_{\edge_i}
\end{equation*}
which gives the lower bound \eqref{eq:edge-length}. We have by direct calculation
\begin{equation}\label{eq:gradient-F}
 \nabla \param_{\Q} (\xi_1,\xi_2)= \left  [ \f{t}^{(1)} - \xi_2(\f{t}^{(1)} +
 \f{t}^{(3)})  \quad  -\f{t}^{(4)} + \xi_1(\f{t}^{(2)} + \f{t}^{(4)}) \right ] 
\end{equation}
and so, $ \det(\nabla \param_\Q) $ being a bilinear polynomial, its extrema are attained at the vertices of $\hatquad $, that is
\begin{equation*}
 \min(  \det(\nabla \param_\Q) ) = \min \{a^{(i)}, i=1,\ldots,4 \}.
\end{equation*}
Since
\begin{equation*}
 a^{(i)} = h_{\edge_i} h_{\edge_{i-1}} \sin(\angle(\f t^{(i)},\f t^{(i-1)})),
\end{equation*}
the lower bound \eqref{eq:det_jacobian_F} follows from \eqref{eq:internal-angle} and \eqref{eq:edge-length}.
\end{proof}
The condition $\shaperegularityparameter_{\Q} > 0 $ also implies that the parametrization
$\param_{\Q} : \hatquad \rightarrow \overline{\Q}$ is regular, that is, its inverse  $\param_{\Q}^{-1} : \overline{\Q} \rightarrow\hatquad$ has bounded derivatives too. This follows from~\eqref{eq:det_jacobian_F}.

Similar to~\cite{BrSu05}, we assume that the quadrilateral mesh $\Mesh$ is \emph{shape regular}, that is
\begin{equation}
  \label{eq:mesh-is-shape-regular}
  \shaperegularityparameter = \inf_{\Q\in\Quad} \shaperegularityparameter_{\Q} > 0.
\end{equation}

\section{$C^1$ BS quadrilateral elements} \label{sec:Argyris}

In the following we recall the definition of BS quadrilaterals from
\cite{BrSu05}, extend to the lower degree  cases, and define the
associated piecewise polynomial $C^1$ space over the
domain of interest $\Omega$, given a quadrilateral mesh $\Mesh$. In
our presentation we loosely follow the style of~\cite{BrSc07,Ci02}. The BS  quadrilateral of degree $p\geq5$ is constructed from  bi-quintic polynomials with normal derivatives across interfaces that are polynomials of degree $p-1$. For $p=5$ the degrees of freedom are given as $C^2$-data at the vertices, 
normal derivatives at the edge midpoints, as well as interior point evaluations. This is in accordance with the degrees of freedom of the Argyris triangle, 
see~\cite{ArFrSc68}. Moreover, one can define piecewise polynomial spaces of degree $p\in\{3,4\}$, where the quadrilaterals have to be considered as macro-elements and subdivided further. This is explained in more detail in Section \ref{sec:Argyris_spline}.

We denote with $\mathbb{P}^{(p,p)}$ the space of bivariate polynomials of bi-degree $(p,p)$ and with $\mathbb{P}^{p}$ the space of polynomials of total degree $p$,  either uni- or bivariate, depending on context.

In the next subsections we introduce the local spaces and degrees of freedom corresponding to a single quadrilateral $\Q$. To do this, we need the following notation.

\begin{definition}[Pre-images of points and edges]
For every point $\vertex\in\mathbb{R}^2$, with $\vertex\in\overline{\Q}$, we define $\hat{\vertex}$ as the pre-image of ${\vertex}$ under $\param_{\Q}$, i.e., 
$\hat{\vertex} = \param_{\Q}^{-1}(\vertex)$. Analogously, we define $\hat{\edge} = \param_{\Q}^{-1}(\edge)$ for $\edge\in \Edge$ with $\edge\subset\overline{\Q}$. 
In Fig.~\ref{fig:patch} we have, e.g., $\hat{\vertex}_1 = (0,0)^T$.
\end{definition}

One set of degrees of freedom is the normal derivative at the edge midpoint, where we use the following notation. For every edge $\edge\in\Edge$ between vertices 
$\vertex_1$ and $\vertex_2$, let $\midp_\edge = \frac{1}{2}\vertex_1 + \frac{1}{2}\vertex_2$ be the edge midpoint. Moreover, let $\edgenorm$ be its unit normal vector and $\dn$ be the normal derivative of a function defined on $\Omega$ across the edge $\edge$. Here we assume that the direction of the normal is fixed for every edge of the mesh $\Mesh$.

\subsection{Local space and degrees of freedom, $p=5$}

Given a quadrilateral $\Q \in\Quad$ we define the local function space and the local degrees of freedom as follows. 
\begin{definition}[BS  quadrilateral for $p=5$]\label{def:local-space-and-dofs}
Given a quadrilateral $\Q$ with vertices $\vertex_1$, $\vertex_2$, $\vertex_3$ and $\vertex_4$ and edges $\edge_1$, $\edge_2$, $\edge_3$ and $\edge_4$ following~\cite{Ci02}, we define the BS  quadrilateral of degree $p=5$ as $(\Q,\Space_\Q^5,\Dual_\Q^5)$, with
\begin{equation}\label{eq:local_space_p5}
 \Space_\Q^5 = \left\{ \func:\overline{\Q}\rightarrow \mathbb{R}, \mbox{ with } (\func \circ \param_\Q) \in\mathbb{P}^{(5,5)}, (\dni \func \circ \param_\Q)|_{\hat{\edge}_i} 
 \in \mathbb{P}^4 ,\; 1 \leq i \leq 4 \right\}
\end{equation}
and 
\begin{equation}\label{eq:local_dofs_p5}
\begin{array}{l}
 \Dual_\Q^5 = \Dual_{0,\Q} \cup \Dual_{1,\Q}^5 \cup \Dual_{2,\Q}^5, \mbox{ with } \\
 \; \Dual_{0,\Q} = \left\{\func(\vertex_i), \dx\func(\vertex_i), \dy\func(\vertex_i), \dx\dx\func(\vertex_i),\dx\dy\func(\vertex_i), \dy\dy\func(\vertex_i),\; 1 \leq i 
 \leq 4 \right\}, \\
 \; \Dual_{1,\Q}^5 = \left\{\dni \varphi(\midp_{\edge_i}),\; 1 \leq i \leq 4 \right\}, \\
 \; \Dual_{2,\Q}^5 = \left\{ \func (\facep),\; \facep \in \Facep_\Q^5 \right\}.
\end{array}
\end{equation}

The set of face points is given as 
\begin{equation*}
 \Facep_{\Q}^5 = \left\{ \param_\Q \left(\eta_1,\eta_2\right),\; \eta_1,\eta_2\in\left\{\frac{2}{5},\frac{3}{5}\right\} \right\}.
\end{equation*}
\end{definition}
See Fig.~\ref{fig:dofs} for a visualization of the local degrees of freedom of the BS  quadrilateral.

\begin{figure}[htb]
\centering
\includegraphics[width=.3\textwidth]{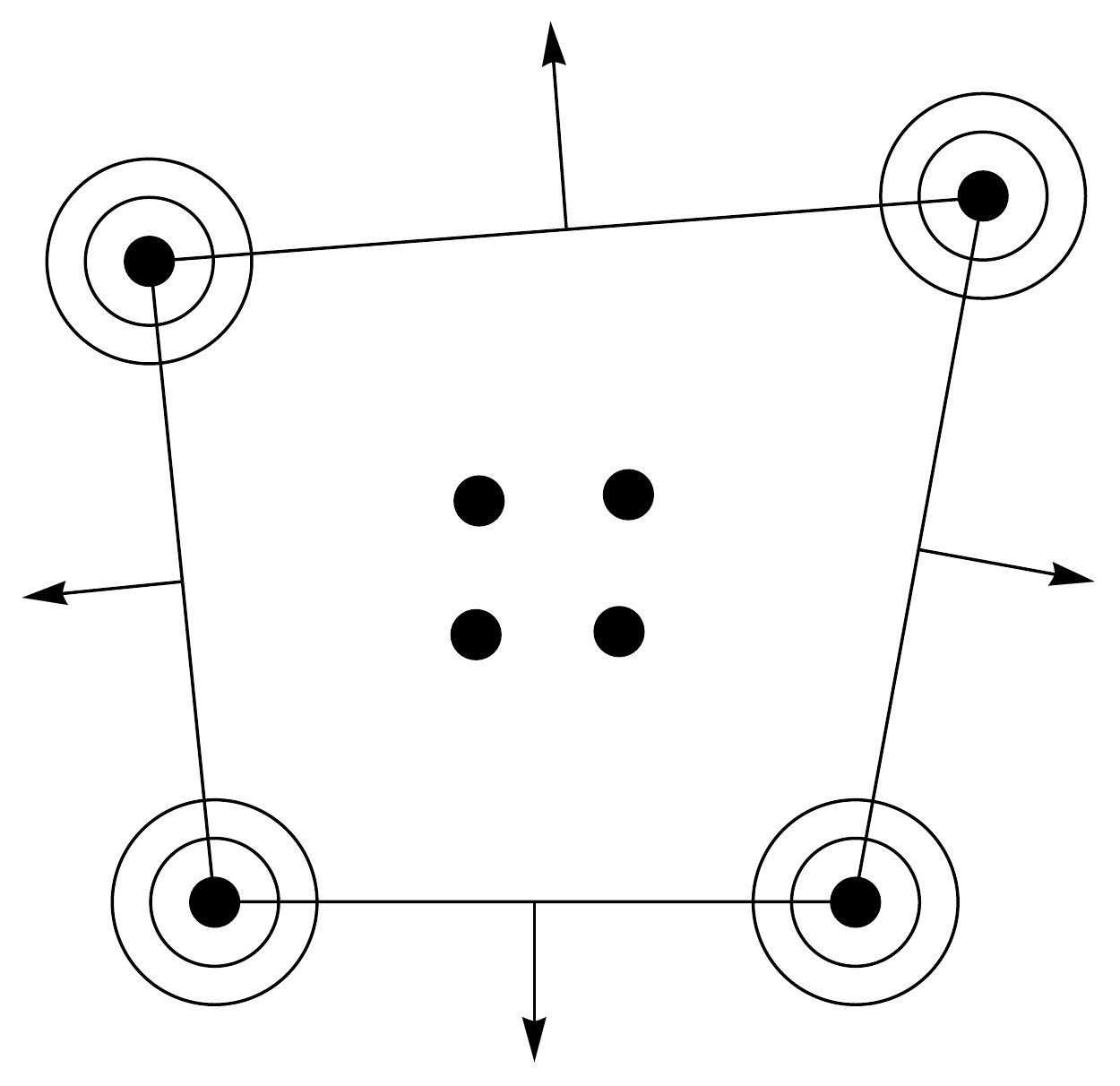}
\caption{The BS  quadrilateral for $p=5$, visualizing the degrees of freedom $\Dual_\Q^5$.}
\label{fig:dofs}
\end{figure}

The unisolvency of the degrees of freedom $\Dual_\Q^5$ for the space
$\Space_\Q^5$ follows from the basis construction in
Section~\ref{sec:polynomial-representation}.

We have already observed the similarity of this construction with the Argyris triangle. Let us recall the definition of the Argyris triangle for $p=5$ as given in~\cite{ArFrSc68,Ci02}.
\begin{definition}[Argyris triangle for $p=5$]
Given a triangle $T$ with vertices $\vertex_1$, $\vertex_2$ and $\vertex_3$ and edges $\edge_1$, $\edge_2$ and $\edge_3$ we define the Argyris triangle as $(T,\Space_T,\Dual_T)$, with
$\Space_T = \mathbb{P}^{5}$
and $\Dual_T = \Dual_{0,T} \cup \Dual_{1,T}$, with
\begin{equation*}
\begin{array}{l}
 \;\Dual_{0,T} = \left\{\func(\vertex_i), \dx\func(\vertex_i), \dy\func(\vertex_i), \dx\dx\func(\vertex_i),\dx\dy\func(\vertex_i), \dy\dy\func(\vertex_i),\; 1 \leq i 
 \leq 3 \right\}, \\
 \;\Dual_{1,T} = \left\{\dni \varphi(\midp_{\edge_i}),\; 1 \leq i \leq 3 \right\}.
\end{array}
\end{equation*}
\end{definition}
Hence, the degrees of freedom for the BS  quadrilateral ($p=5$) and Argyris triangle ($p=5$) are the same, except for the additional point evaluations at face points in the quadrilateral case. In addition, the traces as well as normal derivatives along edges are the same in both elements, i.e., for $p=5$ traces are quintic polynomials and normal derivatives are 
quartic polynomials. The degrees of freedom for the Argyris triangle are visualized in Figure~\ref{fig:dofsAB} (left).

\begin{figure}[htb]
\centering
\includegraphics[width=.3\textwidth]{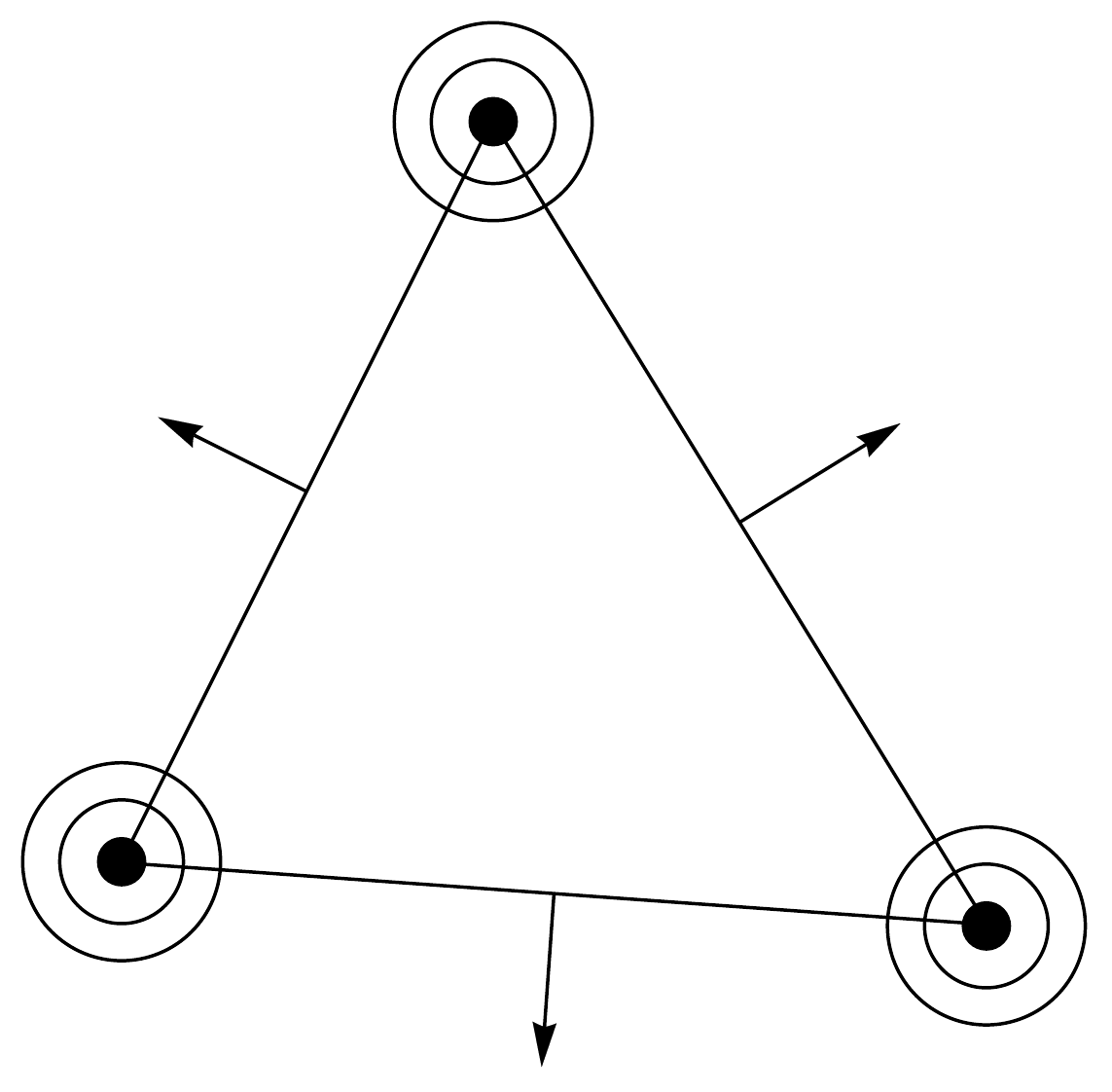}\hspace{.1\textwidth}
\includegraphics[width=.3\textwidth]{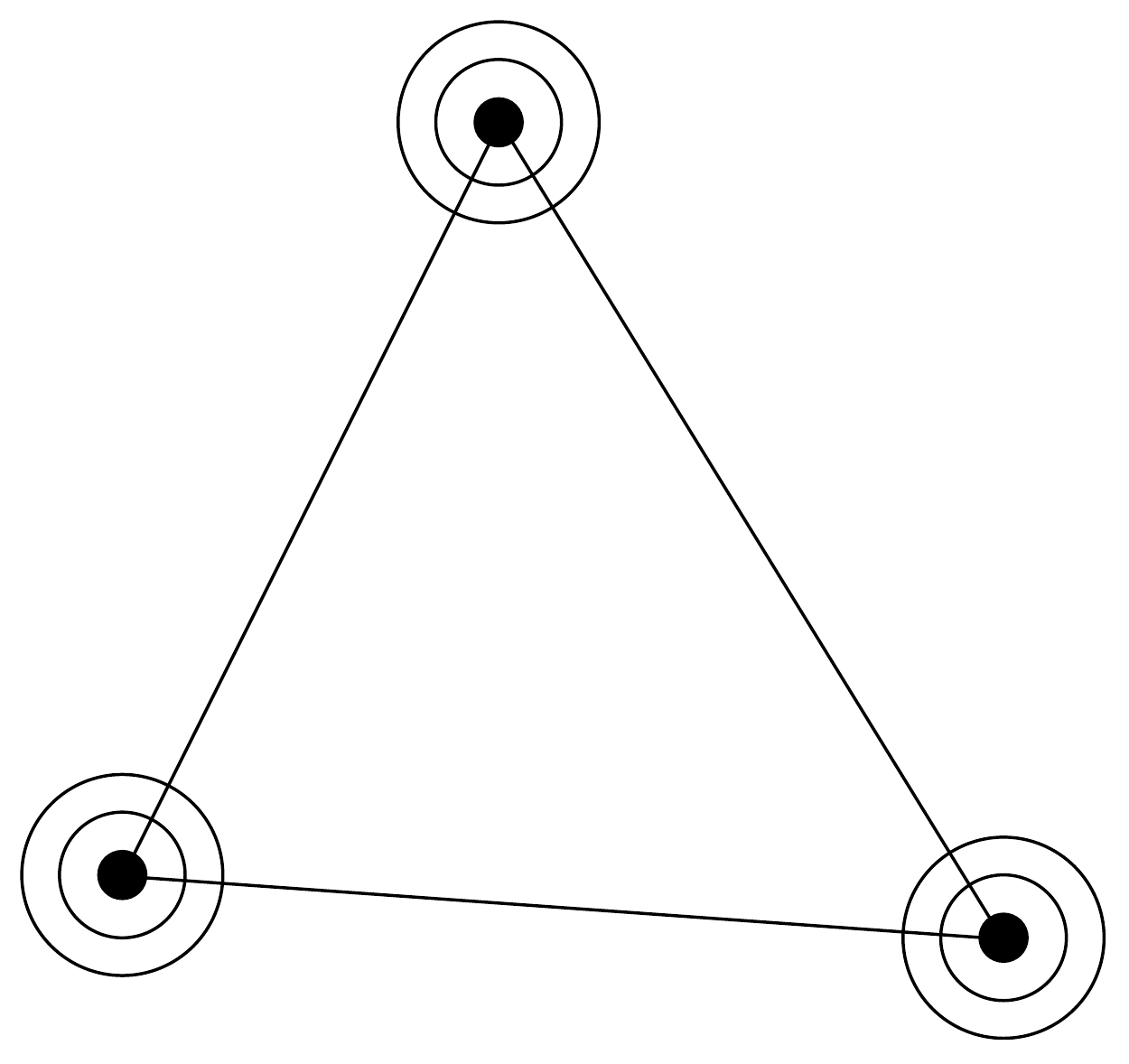}
\caption{The Argyris triangle (left) and Bell triangle (right), visualizing $\Dual_T^A$ and $\Dual_T^B$, respectively.}
\label{fig:dofsAB}
\end{figure}

In addition, the condition that the normal derivative along an edge is of degree $4$, is similar to the condition on the Bell triangular element \cite{Be69}, a quintic element, where normal derivatives are assumed to be polynomials of degree $3$, thus eliminating the normal derivative degrees of freedom and resulting in $18$ degrees of freedom per triangle.
\begin{definition}[Bell triangle for $p=5$]
Given a triangle $T$ with vertices $\vertex_1$, $\vertex_2$ and $\vertex_3$ and edges $\edge_1$, $\edge_2$ and $\edge_3$ we define the Bell triangle as $(T,\Space_T^B, \Dual_T^B)$, with
\begin{equation}
 \Space_T^B = \left\{ \func:\overline{T}\rightarrow \mathbb{R}, \mbox{ with } \func \in\mathbb{P}^{5},\; \dni \func |_{{\edge}_i} \in \mathbb{P}^3 ,\; 1 \leq i \leq 3 \right\}
\end{equation}
and $\Dual_T^B = \Dual_{0,T}$.
\end{definition}
The degrees of freedom for the Bell triangle are visualized in Figure~\ref{fig:dofsAB} (right). Both triangle elements possess variants of higher degree, see~\cite{ArFrSc68,Be69,Ci02,LaSc07}. For triangular elements, constructions of smooth spaces for lower degrees are usually based on special splits, such as the Clough-Tocher or Powell-Sabin $6$- or $12$-splits. Unlike the triangular case, in the quadrilateral case variants of lower degree are relatively straightforward and follow from the spline constructions developed in~\cite{KaSaTa19}.

\subsection{Local space and degrees of freedom,  $p\geq 6$}

Given a quadrilateral $\Q \in\Quad$ we define the local function space and the local degrees of freedom for $p\geq 6$ as follows. 
\begin{definition}[BS  quadrilateral~\cite{BrSu05}]\label{def:local-space-and-dofs-p6}
Given a quadrilateral $\Q$ with vertices $\vertex_1$, $\vertex_2$, $\vertex_3$ and $\vertex_4$ and edges $\edge_1$, $\edge_2$, $\edge_3$ and $\edge_4$ we define the BS  quadrilateral of degree $p\geq 6$ as $(\Q,\Space_\Q^p,\Dual_\Q^p)$, with
\begin{equation}
 \Space_\Q^p = \left\{ \func:\overline{\Q}\rightarrow \mathbb{R}, \mbox{ with } (\func \circ \param_\Q) \in\mathbb{P}^{(p,p)}, (\dni \func \circ \param_\Q)|_{\hat{\edge}_i} 
 \in \mathbb{P}^{p-1} ,\; 1 \leq i \leq 4 \right\}
\end{equation}
and 
\begin{equation}
\begin{array}{l}
 \Dual_\Q^p = \Dual_{0,\Q} \cup \Dual_{1,\Q}^p \cup \Dual_{2,\Q}^p, \mbox{ with } \\
 \; \Dual_{0,\Q} = \left\{\func(\vertex_i), \dx\func(\vertex_i), \dy\func(\vertex_i), \dx\dx\func(\vertex_i),\dx\dy\func(\vertex_i), \dy\dy\func(\vertex_i),\; 1 \leq i 
 \leq 4 \right\}, \\
 \; \Dual^p_{1,\Q} = \left\{\varphi(\param_{\edge_i}(\frac{j}{p})),\; \mbox{ for } 1 \leq i \leq 4,\; 3\leq j\leq p-3 \right\} \\ 
 \hspace{40pt}\cup \left\{\dni \varphi(\param_{\edge_i}(\frac{j}{p-1})),\; \mbox{ for } 1 \leq i \leq 4,\; 2\leq j\leq p-3 \right\}, \\
 \; \Dual_{2,\Q}^p = \left\{ \func (\facep),\; \facep \in \Facep_\Q^p \right\}.
\end{array}
\end{equation}
Here $\param_{\edge_i} = \param_\Q|_{\edge_i}$, and the set of face points is given as 
\begin{equation*}
 \Facep_{\Q}^p = \left\{ \param_\Q \left(\eta_1,\eta_2\right),\; \eta_1,\eta_2\in\left\{\frac{2}{p},\ldots,\frac{p-2}{p}\right\} \right\}.
\end{equation*}
\end{definition}
As one can easily see, Definition~\ref{def:local-space-and-dofs-p6}
covers also the case of
Definition~\ref{def:local-space-and-dofs}. Obviously, we have the
following. The degrees of freedom  $\Dual_\Q^p$ are unisolvent for the space
$\Space_\Q^p$. Indeed, one can show (see   \cite{BrSu05}) that  the
dimension of $\Space_\Q^p$ is  given by $ \dim(\mathbb{P}^{(p,p)})=(p+1)^2$ minus the number of constraints from $(\dni \func \circ \param_\Q)|_{\hat{\edge}_i} \in \mathbb{P}^{p-1}$, which are one per edge, that is, four.  Then  the
dimension of $\Space_\Q^p$ is  $(p+1)^2-4$ and  equals the cardinality
of $\Dual_\Q^p$.

\subsection{Local space and degrees of freedom, $p\in\{3,4\}$}\label{subsec:local_p34}

In the following we extend the construction on quadrilaterals to lower degrees $p=3$ and $p=4$ using a split into sub-elements, as in Fig.~\ref{fig:dofs34}. We assume that the 
parameter domain $\hatquad$ is split into sub-elements $\hat{q}\in s_k(\hatquad)$, with 
\begin{equation}
 s_k(\hatquad) = \left\{ \left[\frac{i}{k},\frac{i+1}{k}\right] \times \left[\frac{j}{k},\frac{j+1}{k}\right],\; 0\leq i\leq k-1,0\leq j\leq k-1\right\}.
\end{equation}
\begin{definition}[$C^1$ quadrilateral macro-element for $p\in\{3,4\}$]\label{def:local-space-and-dofs-34}
Given a quadrilateral $\Q$ with vertices $\vertex_1$, $\vertex_2$, $\vertex_3$ and $\vertex_4$ and edges $\edge_1$, $\edge_2$, $\edge_3$ and $\edge_4$ we define the $C^1$ quadrilateral macro-element of degree $p\in\{3,4\}$ as $(\Q,\Space_\Q^p,\Dual_\Q^p)$, with
\begin{equation}\label{eq:local_space_p34}
 \Space_\Q^p = \left\{ \func:\Q\rightarrow \mathbb{R}, \mbox{ with } 
 \begin{array}{ll} \func &\in C^{p-2}(\Q),\\
 (\func \circ \param_\Q)|_{\hat{q}} &\in \mathbb{P}^{(p,p)},\\
 (\func \circ \param_\Q)|_{\hat{\edge}_i} &\in C^{p-1}(\hat{\edge}_i), \\
 (\dni \func \circ \param_\Q)|_{\hat{\edge}_i\cap\hat{q}} &\in \mathbb{P}^{p-1}
\end{array}
\begin{array}{l} 
\mbox{ for } \hat{q}\in s_{6-p}(\hatquad), \\
\mbox{ for  }1\leq i\leq 4, 
\end{array} \right\}
\end{equation}
and 
\begin{equation}\label{eq:local_dofs_p34}
\begin{array}{l}
 \Dual_\Q^p = \Dual_{0,\Q} \cup \Dual^{p}_{1,\Q} \cup \Dual_{2,\Q}^p, \mbox{ with } \\
 \;\Dual_{0,\Q} = \left\{\func(\vertex_i), \dx\func(\vertex_i), \dy\func(\vertex_i), \dx\dx\func(\vertex_i),\dx\dy\func(\vertex_i), \dy\dy\func(\vertex_i),\; 1 \leq i \leq 4 
 \right\}, \\
 \; \Dual_{1,\Q}^p = \left\{\dni \func(\midp_{\edge_i}),\; 1 \leq i \leq 4 \right\}, \\
 \;\Dual_{2,\Q}^p = \left\{ \func (\facep),\; \facep \in \Facep_\Q^p \right\}.
\end{array}
\end{equation}

For $p=4$ the set of face points is given as 
\begin{equation*}
 \Facep_{\Q}^4 = \left\{ \param_\Q \left(\eta_1,\eta_2\right),\quad \eta_1,\eta_2\in\left\{\frac{1}{4},\frac{2}{4},\frac{3}{4}\right\} \right\},
\end{equation*}
for $p=3$ we have 
\begin{equation*}
 \Facep_{\Q}^3 = \left\{ \param_\Q \left(\eta_1,\eta_2\right),\quad \eta_1,\eta_2\in\left\{\frac{2}{9},\frac{4}{9},\frac{5}{9},\frac{7}{9}\right\} \right\}.
\end{equation*}
\end{definition}
As for $p=5$, the degrees of freedom $\Dual_\Q^p$ completely determine the functions from the space~$\Space_\Q^p$ and the dimension is given by $\dim(\Space_\Q^p)=|\Dual_\Q^p|= 28+(7-p)^2$. This follows as a special case of Lemma \ref{lemma:unisolvency-spline-elements}.

In Fig.~\ref{fig:dofs34} we visualize the polynomial sub-elements from~\eqref{eq:local_space_p34} and local degrees of freedom from~\eqref{eq:local_dofs_p34} for $p\in\{3,4\}$. 
\begin{figure}[htb]
\centering
\includegraphics[width=.3\textwidth]{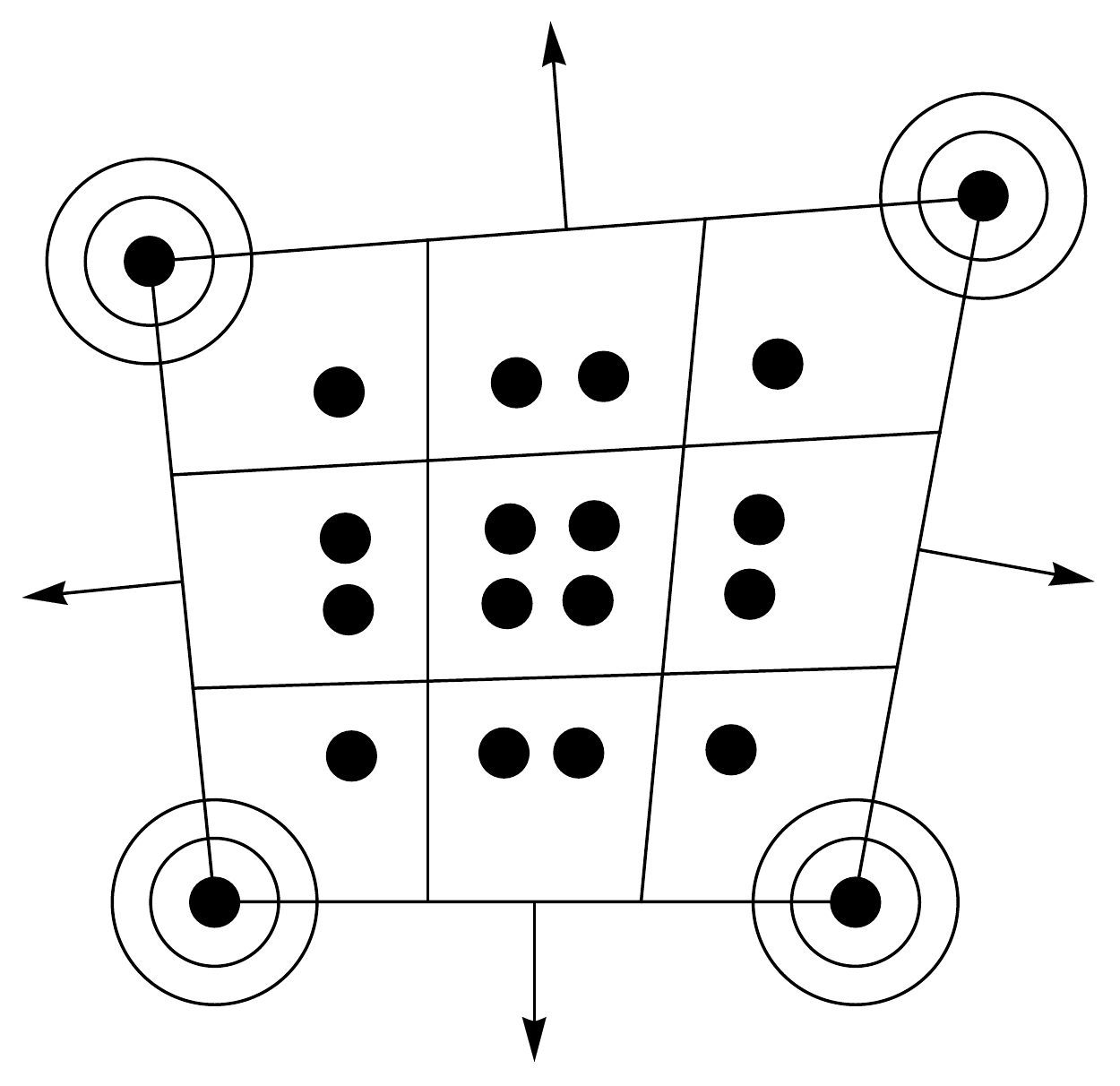}\qquad
\includegraphics[width=.3\textwidth]{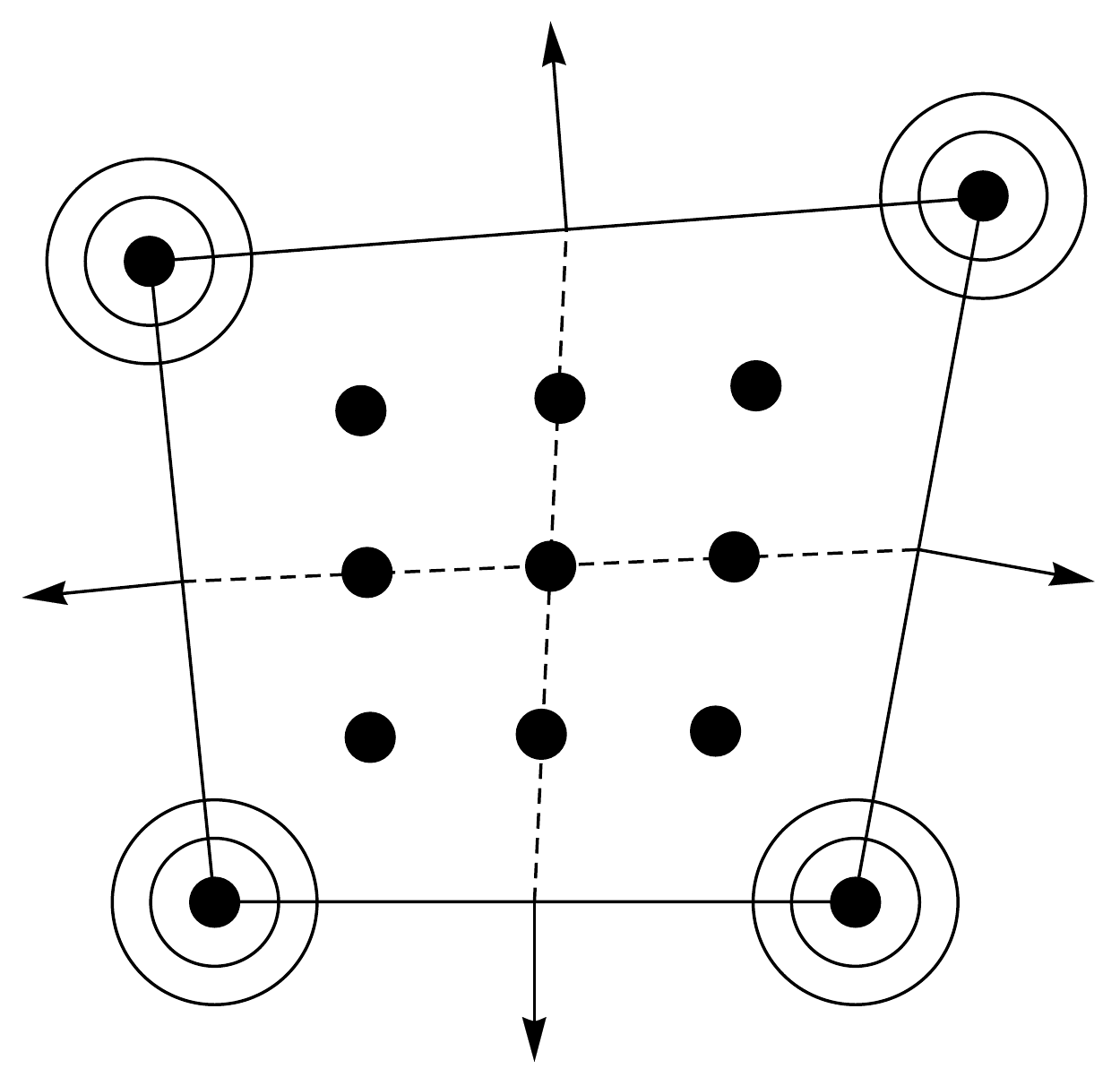}
\caption{The $C^1$ quadrilateral macro-elements for $p=3$ (left) and $p=4$ (right), visualizing $\Dual_\Q^3$ and $\Dual_\Q^4$, respectively. The solid inner lines represent lines 
of $C^1$ continuity, whereas the dashed lines are $C^2$.}
\label{fig:dofs34}
\end{figure}

\subsection{Global space and global degrees of freedom}

In this section we describe the global space and set of degrees of freedom from the local spaces and degrees of freedom defined above, with focus on the low-degree cases $p\in\{3,4,5\}$.
\begin{definition}[Global degrees of freedom]\label{def:global-dofs} Let $p\in\{3,4,5\}$. Given a quadrilateral mesh $\Mesh$ we have the degrees of freedom $\Dual^p$, given as 
\begin{itemize}
 \item $\func(\vertex)$, $\dx\func(\vertex)$, $\dy\func(\vertex)$, $\dx\dx\func(\vertex)$, $\dx\dy\func(\vertex)$ and $\dy\dy\func(\vertex)$ for all vertices $\vertex\in\Vertices$;
 \item $\dn\func(\midp_{\edge})$ for all edge midpoints $\midp_\edge$ with $\edge\in\Edge$; and
 \item $\func(\facep_{\Q})$ for all face points $\facep_{\Q}\in\Facep_{\Q}^p$ for all $\Q\in\Quad$.
\end{itemize}
\end{definition}

The global degrees of freedom in Definition~\ref{def:global-dofs} together with the finite element descriptions in Definitions~\ref{def:local-space-and-dofs} 
and~\ref{def:local-space-and-dofs-34} determine a global space $\globalspace^p(\Mesh)\subset C^1(\Omega)$.
\begin{lemma}[The $C^1$ quadrilateral space]\label{lemma:global-space}
Let $p\in\{3,4,5\}$ and let $\Mesh$ of $\Omega$ and let the space $\globalspace^p(\Mesh)$ be given by the degrees of freedom~$\Dual^p$ as in Definition~\ref{def:global-dofs}, with 
\begin{equation*}
 \globalspace^p(\Mesh) |_{\Q} = \Space^p_\Q \mbox{ for all }\Q\in\Quad,
\end{equation*}
where the local spaces $\Space_\Q^p$ are given as in Definition~\ref{def:local-space-and-dofs} or \ref{def:local-space-and-dofs-34}, respectively.
Then the global space satisfies $\globalspace^p(\Mesh)\subset C^1(\Omega)$ and we have 
\begin{equation*}
 \dim (\globalspace^p(\Mesh)) = |\Dual^p| = (7-p)^2 \cdot \left|\Quad\right| + 1 \cdot \left|\Edge\right| + 6 \cdot \left|\Vertices\right|.
\end{equation*}
\end{lemma}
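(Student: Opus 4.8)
The plan is to prove the two assertions separately: the inclusion $\globalspace^p(\Mesh)\subset C^1(\Omega)$ and the identity $\dim(\globalspace^p(\Mesh))=|\Dual^p|$ with the explicit count. The counting half is immediate from Definition~\ref{def:global-dofs}: there are $6$ degrees of freedom per vertex, one normal-derivative value per edge (boundary edges included, each belonging to a single element), and $|\Facep_\Q^p|$ interior point evaluations per element. Checking the three cases gives $|\Facep_\Q^5|=2^2$, $|\Facep_\Q^4|=3^2$ and $|\Facep_\Q^3|=4^2$, i.e. $|\Facep_\Q^p|=(7-p)^2$, so that $|\Dual^p|=6|\Vertices|+|\Edge|+(7-p)^2|\Quad|$. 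The substance of the lemma is therefore to show that the global degree-of-freedom map $\globalspace^p(\Mesh)\to\RR^{|\Dual^p|}$ is a bijection and that its elements are globally $C^1$.

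The key is a purely \emph{local} statement about a single element: for $\func\in\Space_\Q^p$ and each edge $\edge$ of $\Q$, both the trace $\func|_\edge$ and the normal derivative $\dn\func|_\edge$ are determined by the degrees of freedom attached to $\edge$, namely the $C^2$-data $\{\func,\dx\func,\dy\func,\dx\dx\func,\dx\dy\func,\dy\dy\func\}$ at the two endpoint vertices together with the single value $\dn\func(\midp_\edge)$. For $p=5$ this is elementary. The trace is a univariate polynomial of degree $5$, hence fixed by its value and its first and second tangential derivatives at the two endpoints — six quantities all read off from the vertex $C^2$-data. The normal derivative is a univariate polynomial of degree $4$: the gradient data fix its endpoint values, the Hessian data fix its endpoint tangential derivatives, and the remaining datum is $\dn\func(\midp_\edge)$. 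This Hermite interpolation problem is unisolvent, since a degree-$4$ polynomial with a double root at each endpoint and a root at the midpoint carries five roots with multiplicity and must vanish. For $p\in\{3,4\}$ the trace and normal derivative are splines on the subdivided edge, and the analogous determination by the edge degrees of freedom is exactly what the local unisolvency result, Lemma~\ref{lemma:unisolvency-spline-elements}, provides.

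Granting this local fact, $C^1$-continuity follows by gluing. Since the normal direction $\edgenorm$ is fixed once per edge of $\Mesh$, and two elements sharing an interior edge $\edge$ also share the endpoint vertex data and the value $\dn\func(\midp_\edge)$, their traces agree along $\edge$ — giving continuity of $\func$ and of the tangential derivative — while their normal derivatives agree along $\edge$ as well; hence the full gradient is continuous across $\edge$. At a vertex the common $C^2$-data make $\func$, $\nabla\func$ (indeed the Hessian) single-valued, so $\func$ is continuously differentiable on all of $\Omega$ and $\globalspace^p(\Mesh)\subset C^1(\Omega)$.

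The same local fact yields bijectivity of the global degree-of-freedom map, and hence the dimension identity. For injectivity, if all global degrees of freedom of $\func$ vanish then on each $\Q$ all local degrees of freedom $\Dual_\Q^p$ vanish, so $\func|_\Q=0$ by local unisolvency (Section~\ref{sec:polynomial-representation} for $p=5$, Lemma~\ref{lemma:unisolvency-spline-elements} for $p\in\{3,4\}$), whence $\func=0$. For surjectivity, given arbitrary prescribed values I would define $\func|_\Q\in\Space_\Q^p$ element by element via local unisolvency; because the trace and normal derivative on each edge depend only on the shared edge degrees of freedom, the pieces on adjacent elements automatically match to order $C^1$ and assemble into a single $\func\in\globalspace^p(\Mesh)\subset C^1(\Omega)$ realizing the prescribed values. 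Thus the map is a bijection and $\dim(\globalspace^p(\Mesh))=|\Dual^p|$. I expect the only genuinely delicate point to be the local determination of the edge normal derivative in the macro-element cases $p\in\{3,4\}$: there the normal-derivative trace, viewed as a free spline on the split edge, carries more degrees of freedom than the five edge data prescribe, and the reduction to the controllable dimension is forced by the interior $C^{p-2}$-continuity together with the $C^{p-1}$-trace constraint — precisely the content absorbed into Lemma~\ref{lemma:unisolvency-spline-elements}.
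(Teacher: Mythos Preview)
Your proof is correct and follows essentially the same approach as the paper's: both reduce the $C^1$ claim to showing that, on each edge, the trace and the normal derivative are uniquely determined by the shared edge degrees of freedom (the $C^2$ vertex data plus the midpoint normal derivative), and both defer the macro-element cases $p\in\{3,4\}$ to the spline unisolvency result (Lemma~\ref{lemma:unisolvency-spline-elements}). You are more explicit than the paper in two places: you spell out the Hermite-interpolation count for $p=5$, and you argue bijectivity of the global degree-of-freedom map via separate injectivity and surjectivity arguments, whereas the paper dispatches the dimension identity as ``a simple counting argument''; your closing remark correctly identifies that the only nontrivial content in the $p\in\{3,4\}$ cases is precisely what is packaged into Lemma~\ref{lemma:unisolvency-spline-elements}.
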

\begin{proof}
Note that the piecewise polynomial space $\Space_\Q^p$ from Definition~\ref{def:local-space-and-dofs-34} covers also the polynomial case $p=5$ for $k=1$, where $s_{6-p}(\hatquad) = s_{1}(\hatquad) = \{\hatquad\}$. 
To prove $\globalspace^p(\Mesh)\subset C^1(\Omega)$ we consider all
$C^1$-data along a single edge $\edge$ between two elements $\Q$ and
$\Q'$. Let $\func\in \globalspace^p(\Mesh)$ and let $\hat{q}\in
s_{6-p}(\hatquad)$. We have, since $\func |_{\Q} \in \Space^p_\Q$,
that $(\func \circ \param_\Q) |_{\hat{\edge} \cap \hat{q}} \in
\mathbb{P}^p$ and $(\func \circ \param_\Q) |_{\hat{\edge}} \in
C^{p-1}$, and  $(\dn \func \circ
\param_\Q)|_{\hat{\edge}\cap\hat{q}} \in \mathbb{P}^{p-1}$. Consequently, since $\param_\Q |_{\hat{\edge}}$ is a linear function, we have $\func |_{\edge\cap q} \in \mathbb{P}^p$, $\func |_{\edge} \in C^{p-1}$ and $\dn \func |_{{\edge}\cap{q}} \in \mathbb{P}^{p-1}$, where $\edge\cap q = \edge\cap \param_{\Q}(\hat{q}) = \edge\cap \param_{\Q'}(\hat{q}')$. Hence, $\func|_{\edge}$ is a piecewise polynomial of degree $p$, with dimension~$6$. Value, 
first and second derivative (in direction of the edge) of $\func$ at the two vertices of $\edge$ are determined by the $C^2$-data. The function $\func|_{\edge}$ is thus completely determined by the $C^2$-data. This is independent of the element $\Q$, $\Q'$ under consideration. Hence, we have $\func \in C^0 (\Omega)$. Moreover, by definition, the function $\dn \func |_{\edge}$ is a piecewise polynomial of degree $p-1$, with dimension $5$, independent of $\Q$, $\Q'$. Of those $5$ degrees of freedom, the $C^2$-data at the vertices determine two each, whereas one is determined by $\dn\func(\midp_{\edge})$. Hence, $\func|_{\edge}$ and $\dn \func |_{\edge}$ are completely determined by the global degrees of freedom and $\func \in C^1 (\Omega)$. What remains to be shown is that $\dim (\globalspace^p(\Mesh)) = |\Dual^p|$.  Its proof follows directly from a simple counting argument.
\end{proof}
We have presented Lemma~\ref{lemma:global-space} and its proof purely in terms of a finite element setting, considering the local spaces and global degrees of freedom. See~\cite[Section 4]{KaSaTa19} for a more general statement on spline patches.  Note that the space $\globalspace^p(\Mesh)$ is $C^2$ at all vertices by construction.

\begin{remark}
Since both the degrees of freedom $\Dual_\Q^p$ as well as the definition of the local space $\Space_\Q^p$ depend on derivatives in normal direction, the proposed BS quadrilaterals (including the macro-element variants) are not affine invariant, as the Argyris triangle, which possesses an affine invariant space, but no affine invariant degrees of freedom.
\end{remark}

\section{Approximation properties}

\label{sec:gs-appr-prop}

In this section we prove local and global approximation
estimates, where the error is measured only in the norms of interest $\| \cdot \|_{L^{\infty}}$,
$\|\cdot \|_{L^{2}}$,  and  $\|\cdot \|_{H^{\ell}}$, for simplicity.
For the notation concerning Sobolev spaces, we follow \cite{BrSc07}. 

Given a convex quadrilateral $\Q\in \Quad$, the main ingredient to prove the local approximation estimate is the projector 
${ \Pi_{\Space^p_{\Q}}}: C^2(\overline{\Q}) \rightarrow \Space_\Q^p$
defined by 
\begin{equation}\label{eq:projector_local}
  \Pi_{\Space^p_{\Q}}(\varphi ) = \sum_{\lambda_{\Q} \in \Dual^p_{\Q}}
  \lambda_{\Q}(\varphi ) \basisf_{\lambda_{\Q}},
\end{equation}
where $\basisf_{\lambda_{\Q}} \in \Space^p_{\Q}$ are basis functions
that satisfy $\lambda_{\Q}(\basisf_{\lambda_{\Q}}) = 1$ and
$\lambda_{\Q}'(\basisf_{\lambda_{\Q}}) = 0$ for all $\lambda_{\Q} \neq
\lambda_{\Q}' \in \Dual_{\Q}^p$. The existence of such a basis is
a consequence of the unisolvence of the set of degrees of freedom $\Dual_{\Q}^p$.  A key property for the approximation result is the basis stability stated below.

\begin{lemma}\label{lemma:basis-stability}
 Let $\Q\in \Quad$ be a convex quadrilateral. There exists a
 constant $C>0$, dependent on  $h_{\Q} $,   $\shaperegularityparameter_{\Q}$,  and $p$   such that for all  $\lambda_{\Q}  \in \Dual_{\Q}^p$
 \begin{equation*}
   \| \basisf_{\lambda_{\Q}} \| \leq C
 \end{equation*}
 where  $\| \cdot \| $ is any of the norms of interest.
 \end{lemma}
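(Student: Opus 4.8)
The plan is to prove the estimate by a scaling argument that reduces everything to a reference configuration of fixed (unit) size, where the bound follows from a compactness argument over the admissible shapes. Throughout, recall that by the unisolvency of $\Dual_\Q^p$ on $\Space_\Q^p$ (the basis construction of Section~\ref{sec:polynomial-representation} for $p=5$, and Lemma~\ref{lemma:unisolvency-spline-elements} for $p\in\{3,4\}$) the dual basis $\{\basisf_{\lambda_\Q}\}$ is well-defined, and the number of basis functions equals $\dim(\Space_\Q^p)=28+(7-p)^2$, independent of $\Q$. First I would normalize the size. Write $\param_\Q = \vertex_1 + h_\Q\,\param_{\tilde\Q}$, where $\tilde\Q = h_\Q^{-1}(\Q-\vertex_1)$ is the rescaled element, of diameter comparable to $1$ and with the same minimum angle $\shaperegularityparameter_\Q$. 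Each degree of freedom transforms under this rescaling with a fixed power of $h_\Q$: a point value is invariant, while a first (resp.\ second) physical derivative and the normal derivative at an edge midpoint pick up a factor $h_\Q^{-1}$ (resp.\ $h_\Q^{-2}$), since $\edgenorm$ is scale-invariant. Correspondingly, if $\tilde\basisf_{\tilde\lambda}$ denotes the dual basis on $\tilde\Q$, then $\basisf_{\lambda_\Q}(\f x) = h_\Q^{\,s_\lambda}\,\tilde\basisf_{\tilde\lambda}(h_\Q^{-1}(\f x-\vertex_1))$ with $s_\lambda\in\{0,1,2\}$ the order of $\lambda$. Since each norm of interest on $\Q$ differs from the corresponding norm on $\tilde\Q$ by a fixed power of $h_\Q$ and, by equivalence of norms on a fixed-dimensional space, it suffices to control a single norm (say $\|\cdot\|_{L^\infty}$), it is enough to bound $\|\tilde\basisf_{\tilde\lambda}\|_{L^\infty(\tilde\Q)}$ by a constant depending only on $\shaperegularityparameter_\Q$ and $p$.

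Next I would express the rescaled dual basis through the inverse of the degrees-of-freedom matrix. Pulling back to $\hatquad$ by the unit-size bilinear map, I fix the tensor-product Bernstein basis of $\mathbb{P}^{(p,p)}$ (for $p\in\{3,4\}$, the corresponding B-spline basis on the subdivided $\hatquad$) and restrict to the subspace cut out by the constraints $(\dni\func\circ\param)|_{\hat{\edge}_i}\in\mathbb{P}^{p-1}$. Writing $\tilde\func = \sum_k c_k\,\psi_k$ and evaluating the functionals gives a square matrix $A$ with $A_{jk}=\tilde\lambda_j(\psi_k)$; the dual basis coefficients are the columns of $A^{-1}$, so $\|\tilde\basisf_{\tilde\lambda}\|_{L^\infty}\le \|A^{-1}\|\,\max_k\|\psi_k\|_{L^\infty}$. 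By the chain rule, the entries of $A$ are polynomial expressions in the vertex coordinates of $\tilde\Q$ and in the entries of $\nabla\param$ and $(\nabla\param)^{-1}$, which are bounded in terms of $\shaperegularityparameter_\Q$ by Proposition~\ref{prop:shape-regularity}; hence $\|A\|$ is bounded, and the $\psi_k$ are fixed reference functions with fixed norms.

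The main obstacle is the control of $\|A^{-1}\|$: unlike in the affine simplicial case, the pulled-back space depends on $\Q$, since the physical normals $\edgenorm$ — and hence the constraints defining both $\Space_\Q^p$ and the admissible coefficient vectors — vary with the geometry, so one cannot simply invoke equivalence of norms on a single fixed finite-dimensional space. I would resolve this by a compactness argument. After the normalization of the first step (placing $\vertex_1$ at the origin, fixing the scale, and factoring out rotations), the admissible normalized quadrilaterals whose minimum angle is bounded below by $\shaperegularityparameter_\Q$ form a compact set; on it $A$ depends continuously on the vertex positions and is invertible everywhere by unisolvency, so the map assigning $\|A^{-1}\|$ to each such quadrilateral is continuous and everywhere finite, hence bounded by a constant depending only on $\shaperegularityparameter_\Q$ and $p$. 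Combining this with the scaling relations of the first step yields $\|\basisf_{\lambda_\Q}\|\le C(h_\Q,\shaperegularityparameter_\Q,p)$ for every norm of interest, and the same argument applies verbatim to the macro-element spaces for $p\in\{3,4\}$, taking for $\{\psi_k\}$ the reference B-spline basis on the subdivided $\hatquad$.
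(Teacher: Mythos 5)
Your proof is correct and follows essentially the same route as the paper's: both rest on the observation that the dual basis depends continuously on the element geometry (through $\param_\Q$, its inverse, and the normals entering the constraints and functionals), that unisolvency guarantees well-definedness throughout, and that the admissible geometries with minimum angle bounded below by $\shaperegularityparameter_\Q$ form a compact set, so continuity plus compactness yields the uniform bound. Your explicit $h_\Q$-rescaling and the formulation via the degrees-of-freedom matrix $A$ and $\|A^{-1}\|$ only make more concrete what the paper compresses into ``the dependence is continuous''; the paper instead keeps $h_\Q$ inside the compact parameter set $\{\f t^{(1)},\ldots,\f t^{(4)}\}$ rather than scaling it out first.
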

\begin{proof}
 Each basis function $\beta_{\lambda_{\Q}}$ can
 be obtained by imposing the conditions to
 belong to the space
 \begin{equation}
   \label{eq:basis-constraint-1}
   \basisf_{\lambda_{\Q}}  \in \Space^p_{\Q}
 \end{equation}
 and to be in  duality  to the degrees of freedom
\begin{equation}
   \label{eq:basis-constraint-2}
 \lambda_{\Q}(\basisf_{\lambda_{\Q}}) = 1 \text{ and }
\lambda_{\Q}'(\basisf_{\lambda_{\Q}}) = 0, \quad \forall \lambda_{\Q} \neq
\lambda_{\Q}' \in \Dual_{\Q}^p.
 \end{equation}
In parametric coordinates, it means that $\basisf_{\lambda_{\Q}} \circ  \param_{\Q}$ defined on $ \hatquad $ is a polynomial (for $p\geq 5$, it belongs to $\mathbb{P}^{(p,p)}$) or piecewise polynomial (for $p\in\{3,4\}$, its restriction to each subelement $\hat{q}\in s_{6-p}(\hatquad)$ belongs to $\mathbb{P}^{(p,p)}$)  that fulfills \eqref{eq:basis-constraint-1}--\eqref{eq:basis-constraint-2}. These conditions above involve the first and second derivatives of the inverse parametrization $ \param_{\Q}^{-1}$, that are well defined and  bounded on $\overline \Q$ thanks to \eqref{eq:det_jacobian_F}. Recalling the expression \eqref{eq:gradient-F} of $  \nabla \param_{\Q}$, the first and second derivatives of $ \param_{\Q}^{-1}$ are rational polynomials in  $x_1$ and $x_2$ and depend continuously on the parameters $\f{t}^{(1)}, \ldots,
\f{t}^{(4)}$. Therefore $  \|  \basisf_{\lambda_{\Q}}   \| $ only depends on  $\f{t}^{(1)}, \ldots, \f{t}^{(4)}$, the dependence is continuous and the parameters belong to the compact set
 \begin{equation*}
  \left \{ \f{t}^{(1)}, \ldots,
 \f{t}^{(4)} :  2 \shaperegularityparameter_{\Q} \leq \angle(\f t^{(i)},\f
    t^{(i-1)})) \leq \pi -  2 \shaperegularityparameter_{\Q} \text{
      and } \| \f t^{(i)} \| \leq h_\Q\right \},
\end{equation*}
thanks to Proposition~\ref{prop:shape-regularity}. Continuity and
compactness give the existence of a maximum of  
$\|\basisf_{\lambda_{\Q}}\| $ which only depends on $h_\Q$,
$\shaperegularityparameter_{\Q} $ and on $p$.
\end{proof}
Lemma \ref{lemma:basis-stability} yields the local stability of the projector.
\begin{lemma}\label{lemma:local-projector}
Let $\Q\in \Quad$ be any convex quadrilateral. There exists a
 constant $C>0$, dependent on  $h_{\Q} $,
 $\shaperegularityparameter_{\Q}$,  and $p$   such that  for all $\psi
 \in C^2(\overline{\Q})$, 
 \begin{equation*}
   \| \Pi_{\Space^p_{\Q}} \psi \| \leq C \|\psi \|_{C^2(\overline{\Q})},
 \end{equation*}
 where  $\| \cdot \| $ is one of the norms of interest.
\end{lemma}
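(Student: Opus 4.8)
The plan is to bound the projector $\Pi_{\Space^p_{\Q}}\psi = \sum_{\lambda_{\Q}\in\Dual^p_{\Q}} \lambda_{\Q}(\psi)\,\basisf_{\lambda_{\Q}}$ by controlling the two factors in each summand separately: the size of the degree of freedom $\lambda_{\Q}(\psi)$ applied to $\psi$, and the norm of the associated dual basis function $\basisf_{\lambda_{\Q}}$. The second factor is exactly the content of Lemma~\ref{lemma:basis-stability}, which gives $\|\basisf_{\lambda_{\Q}}\|\leq C$ with $C$ depending only on $h_\Q$, $\shaperegularityparameter_{\Q}$ and $p$. So the real work is to show that every degree of freedom in $\Dual^p_{\Q}$ is a bounded linear functional on $C^2(\overline{\Q})$, that is, $|\lambda_{\Q}(\psi)|\leq C'\,\|\psi\|_{C^2(\overline{\Q})}$, again with $C'$ depending only on the same data.

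First I would inspect the three families of functionals making up $\Dual^p_{\Q} = \Dual_{0,\Q}\cup\Dual^p_{1,\Q}\cup\Dual^p_{2,\Q}$. The vertex functionals in $\Dual_{0,\Q}$ are point evaluations of $\func$ and of its first and second partial derivatives, hence each is dominated directly by $\|\psi\|_{C^2(\overline{\Q})}$ by definition of that norm. The face-point functionals in $\Dual^p_{2,\Q}$ are plain point evaluations $\func(\facep)$, so each is trivially bounded by $\|\psi\|_{C^2(\overline{\Q})}$ (indeed already by the $C^0$ norm). The only functionals requiring a small argument are the normal-derivative evaluations $\dni\func(\midp_{\edge_i})$ in $\Dual^p_{1,\Q}$: since $\dni\func = \edgenormi\cdot\nabla\func$ and $\|\edgenormi\|=1$, one has $|\dni\func(\midp_{\edge_i})|\leq \|\nabla\func(\midp_{\edge_i})\|\leq \sqrt 2\,\|\psi\|_{C^2(\overline{\Q})}$, so these too are bounded with a constant independent of the geometry of $\Q$. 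Thus every $\lambda_{\Q}\in\Dual^p_{\Q}$ satisfies $|\lambda_{\Q}(\psi)|\leq C'\,\|\psi\|_{C^2(\overline{\Q})}$.

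Combining the two bounds and summing over the finitely many degrees of freedom gives
\begin{equation*}
 \| \Pi_{\Space^p_{\Q}}\psi \| \leq \sum_{\lambda_{\Q}\in\Dual^p_{\Q}} |\lambda_{\Q}(\psi)|\,\|\basisf_{\lambda_{\Q}}\| \leq |\Dual^p_{\Q}|\, C\, C'\,\|\psi\|_{C^2(\overline{\Q})},
\end{equation*}
and since the cardinality $|\Dual^p_{\Q}| = 28+(7-p)^2$ depends only on $p$, the resulting constant depends only on $h_\Q$, $\shaperegularityparameter_{\Q}$ and $p$, as claimed.

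I do not expect a genuine obstacle here, since the statement is essentially a triangle-inequality assembly of Lemma~\ref{lemma:basis-stability} with the elementary boundedness of the functionals on $C^2(\overline{\Q})$. The only point demanding a word of care is the normal-derivative functional, where one must note that the unit normal has norm one so that no factor depending on the edge length or on $\shaperegularityparameter_{\Q}$ enters the bound on $|\lambda_{\Q}(\psi)|$; all geometric dependence is already absorbed into the basis-stability constant $C$ of Lemma~\ref{lemma:basis-stability}.
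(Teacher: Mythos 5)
Your proposal is correct and follows essentially the same route as the paper: the triangle inequality combined with the basis-stability bound of Lemma~\ref{lemma:basis-stability} and the elementary continuity of each degree of freedom on $C^2(\overline{\Q})$. The paper compresses the functional-by-functional check into the phrase ``obvious continuity $|\lambda(\psi)|\leq\|\psi\|_{C^2(\overline{\Q})}$,'' which you simply spell out, including the correct observation that the unit normal contributes no geometry-dependent factor.
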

\begin{proof}
Thanks to Lemma \ref{lemma:basis-stability}, we have that 
\begin{equation*}
  \| \Pi_{\Space^p_{\Q}} \psi \| \leq C \max_{\lambda\in
    \Dual_{\Q}^p}\left|\lambda(\psi)\right| ,
\end{equation*}
and then we use the obvious continuity $ \left|\lambda(\psi)\right|
\leq \|\psi
  \|_{C^2(\overline{\Q})}.$ 
\end{proof}
The next two Lemmata, from \cite{BrSc07}, concern  standard Sobolev
inequalities and standard  polynomial approximation over $\Q\in
\Quad$.
\begin{lemma}[{\cite[Lemma 4.3.4]{BrSc07}}]\label{lemma:sobolev-inequality}
Let $\Q\in \Quad$ be any convex quadrilateral. There exists a constant $C_{SI}>0$, dependent on  $h_{\Q} $ and $\shaperegularityparameter_{\Q}$  such that  for all  $\psi \in H^4({\Q})$  we have $\psi \in C^2({\Q})$ and  
\begin{equation*}
  \| \psi \|_{C^2(\overline{\Q})} \leq C_{SI} \| \psi \|_{H^4(\Q)} .
\end{equation*}
\end{lemma}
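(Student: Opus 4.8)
The statement is the classical Sobolev embedding $H^4(\Q)\hookrightarrow C^2(\overline\Q)$ in two dimensions, and its only real content is the explicit dependence of the constant on $h_\Q$ and $\shaperegularityparameter_\Q$. The plan is to transport the problem to the fixed reference square $\hatquad=[0,1]^2$ via the bilinear map $\param_\Q$, apply the embedding there once with a universal constant, and then track how the two norms scale under the pullback, using Proposition~\ref{prop:shape-regularity} to bound the derivatives of $\param_\Q$ and $\param_\Q^{-1}$.

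First I would record the embedding on the reference domain: since $\hatquad$ is a fixed bounded convex (hence Lipschitz) domain and $4-2/2 = 3 > 2$, there is a universal constant $\hat C$ with $\|\hat\psi\|_{C^2(\overline{\hatquad})} \leq \hat C\,\|\hat\psi\|_{H^4(\hatquad)}$ for all $\hat\psi\in H^4(\hatquad)$. Applying this to the pullback $\hat\psi = \psi\circ\param_\Q$ already yields $\hat\psi\in C^2(\overline{\hatquad})$, and since $\param_\Q^{-1}$ is a regular map (a consequence of \eqref{eq:det_jacobian_F}, as noted after Proposition~\ref{prop:shape-regularity}), we obtain $\psi = \hat\psi\circ\param_\Q^{-1}\in C^2(\overline\Q)$.

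Next I would bound the left-hand side from above by $\|\hat\psi\|_{C^2(\overline{\hatquad})}$. Writing $\psi = \hat\psi\circ\param_\Q^{-1}$ and applying the chain rule, every derivative $D^\alpha\psi$ with $|\alpha|\leq 2$ is a finite combination of derivatives $D^\beta\hat\psi$, $|\beta|\leq|\alpha|$, multiplied by products of first and second derivatives of $\param_\Q^{-1}$; by \eqref{eq:gradient-F} and \eqref{eq:det_jacobian_F} these are bounded by a constant depending on $\shaperegularityparameter_\Q$ times the appropriate negative power of $h_\Q$, so $\|\psi\|_{C^2(\overline\Q)}\leq C_1(\shaperegularityparameter_\Q,h_\Q)\,\|\hat\psi\|_{C^2(\overline{\hatquad})}$. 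Symmetrically, I would estimate $\|\hat\psi\|_{H^4(\hatquad)}$ in terms of $\|\psi\|_{H^4(\Q)}$: since $\hat\psi=\psi\circ\param_\Q$, each $D^\beta\hat\psi$ with $|\beta|\leq 4$ is, by the Fa\`a di Bruno formula, a combination of $(D^\gamma\psi)\circ\param_\Q$ times products of derivatives of $\param_\Q$; because $\param_\Q$ is bilinear only its first derivatives and the single mixed second derivative survive (all $O(h_\Q)$ with constants depending on $\shaperegularityparameter_\Q$), and the change of variables $\int_{\hatquad}=\int_\Q(\det\nabla\param_\Q)^{-1}$ together with the lower bound \eqref{eq:det_jacobian_F} controls the Jacobian factor. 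This gives $\|\hat\psi\|_{H^4(\hatquad)}\leq C_2(\shaperegularityparameter_\Q,h_\Q)\,\|\psi\|_{H^4(\Q)}$. Chaining the three inequalities produces the claim with $C_{SI}=C_1\,\hat C\,C_2$.

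The main obstacle is the careful bookkeeping in the two pullback estimates: one must track the precise powers of $h_\Q$ generated by the higher-order chain rule while verifying that every product of derivatives of $\param_\Q^{\pm1}$ and every power of $\det\nabla\param_\Q$ stays controlled, both from above and below, solely in terms of $\shaperegularityparameter_\Q$. The non-affine (bilinear) nature of $\param_\Q$ is what makes this delicate, since second derivatives of the map enter the fourth-order estimate; fortunately derivatives of $\param_\Q$ of order three and higher vanish, which keeps the computation finite and the $h_\Q$-scaling transparent.
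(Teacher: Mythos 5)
Your argument is correct, but it takes a genuinely different route from the paper, which offers no proof of this lemma at all: it is treated as a known result, cited from Lemma 4.3.4 of \cite{BrSc07}. There, Sobolev's inequality is established directly on the physical domain, for any domain star-shaped with respect to a ball, via the averaged Taylor polynomial and a potential representation of the remainder; the constant depends on the diameter and the chunkiness parameter, and a convex quadrilateral with $\shaperegularityparameter_\Q>0$ is star-shaped with respect to a ball whose radius is bounded below in terms of $h_\Q$ and $\shaperegularityparameter_\Q$, so the citation applies as stated. You instead pull everything back to the reference square through $\param_\Q$, invoke the embedding $H^4(\hatquad)\hookrightarrow C^2(\overline{\hatquad})$ once with a universal constant, and control the two transformations by Proposition~\ref{prop:shape-regularity}. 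Both routes are sound. Yours is self-contained (modulo the reference-square embedding) and uses the same mechanism the paper employs elsewhere, e.g.\ in Lemma~\ref{lemma:basis-stability}; the paper's route avoids having to justify the composition of Sobolev functions with the non-affine bilinear map and the Fa\`a di Bruno bookkeeping, and it reuses machinery (averaged Taylor polynomials on star-shaped sets, chunkiness) that is needed anyway for Lemma~\ref{lemma:bramble-hilbert}. Two remarks on your write-up: first, since $C_{SI}$ is allowed to depend on $h_\Q$ as well as on $\shaperegularityparameter_\Q$, the careful tracking of powers of $h_\Q$ that you single out as the main obstacle is not actually needed; it suffices that all derivatives of $\param_\Q$ and $\param_\Q^{-1}$ and the factor $(\det\nabla\param_\Q)^{-1}$ are bounded in terms of $h_\Q$ and $\shaperegularityparameter_\Q$, which \eqref{eq:gradient-F} and \eqref{eq:det_jacobian_F} provide. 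Second, the one nontrivial fact your chain of inequalities rests on -- that composing an $H^4$ function with a smooth map whose inverse has bounded derivatives and whose Jacobian is bounded away from zero again yields an $H^4$ function, with the corresponding norm bound -- is standard but should be stated explicitly with a reference, since without it the pullback $\psi\circ\param_\Q$ is not yet known to lie in $H^4(\hatquad)$ and the reference-square embedding cannot be applied.
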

\begin{lemma}[{\cite[Lemma 4.3.8]{BrSc07}}]\label{lemma:bramble-hilbert}
Let $\Q\in \Quad$ be any convex quadrilateral and  
$B$ a maximal ball   inscribed in $\Q$. Let $m\leq p+1$.
There exists a  constant $C_{BH}>0$, dependent on  $h_{\Q} $,
 $\shaperegularityparameter_{\Q}$,  and $p$   such that  for all
 $\func\in H^m(\Q)$   \begin{equation*} \| \func - \Pi_{\mathbb{P}^p}\func \|_{H^m(\Q)} \leq C_{BH} | \func |_{H^m(\Q)},
\end{equation*}
where $\Pi_{\mathbb{P}^p} \func$ is the averaged Taylor polynomial of
degree $p$ of $\func$ over $B$.
\end{lemma}
The last property we need is that the  BS  quadrilateral element space contains the polynomials of total degree~$p$.
\begin{lemma}
Let $\Q\in \Quad$ be any convex quadrilateral, then  $\mathbb{P}^{p} \subset \Space^p_{\Q}$.
\end{lemma}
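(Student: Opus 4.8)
The claim is that $\mathbb{P}^p \subset \Space^p_\Q$, i.e.\ every bivariate polynomial $\func$ of total degree~$p$ belongs to the local BS space. The plan is to verify the defining conditions of $\Space^p_\Q$ directly for such a $\func$, treating the polynomial cases ($p=5$ and $p\geq6$) and the macro-element cases ($p\in\{3,4\}$) in parallel, since the structure of the argument is the same. Recall that $\param_\Q$ is bilinear, so $\func\circ\param_\Q$ is obtained by substituting bilinear coordinate functions into a polynomial of total degree~$p$.

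\medskip

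\textbf{The bi-degree condition.} First I would check that $\func\circ\param_\Q\in\mathbb{P}^{(p,p)}$. Since each component of $\param_\Q(\xi_1,\xi_2)$ from \eqref{eq:bilinear-parametrization} is an affine function of $\xi_1$, of $\xi_2$, and of the product $\xi_1\xi_2$ — in particular of degree at most~$1$ in each variable separately — composing a monomial $x_1^a x_2^b$ with $a+b\leq p$ yields a polynomial whose degree in $\xi_1$ is at most $a+b\leq p$ and likewise at most $p$ in $\xi_2$. Summing over monomials, $\func\circ\param_\Q\in\mathbb{P}^{(p,p)}$. For $p\in\{3,4\}$ this also shows the restriction to each sub-element $\hat q$ lies in $\mathbb{P}^{(p,p)}$, and global smoothness on $\overline\Q$ (the $C^{p-2}$ and $C^{p-1}$ conditions in \eqref{eq:local_space_p34}) is automatic because $\func\circ\param_\Q$ is a single genuine polynomial, not merely piecewise.

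\medskip

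\textbf{The normal-derivative condition.} The substantive step is showing $(\dni\func\circ\param_\Q)|_{\hat\edge_i}\in\mathbb{P}^{p-1}$ (or $\mathbb{P}^{p-1}$ on $\hat\edge_i\cap\hat q$ for the macro case). The normal derivative $\dni\func$ is $\nabla\func\cdot\edgenormi$; since $\func\in\mathbb{P}^p$, the gradient $\nabla\func$ has components in $\mathbb{P}^{p-1}$, so $\dni\func\in\mathbb{P}^{p-1}$ as a bivariate polynomial in $(x_1,x_2)$. The key geometric fact is that an edge $\edge_i$ is a \emph{straight} segment, so its pre-image $\hat\edge_i$ is an edge of $\hatquad$ on which one parametric coordinate is constant; there $\param_\Q$ restricts to an affine (in fact linear) map of a single variable, as already used in the proof of Lemma~\ref{lemma:global-space}. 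Hence restricting the degree-$(p-1)$ polynomial $\dni\func$ to the line $\edge_i$ and pulling back by this one-variable affine parametrization gives a univariate polynomial of degree at most $p-1$, i.e.\ an element of $\mathbb{P}^{p-1}$. I expect this to be the main point to articulate carefully: the conclusion hinges on the edges being affine images of segments, which collapses the a~priori bi-degree bound on $\dni\func\circ\param_\Q$ down to total degree $p-1$ along the edge.

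\medskip

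\textbf{Conclusion.} Having verified all the defining membership conditions, $\func\in\Space^p_\Q$ for every $\func\in\mathbb{P}^p$, giving the inclusion. I do not anticipate genuine obstacles here — the only subtlety is keeping track of the distinction between bi-degree (which the ambient space $\mathbb{P}^{(p,p)}$ tolerates) and the total-degree restriction that the normal derivative must satisfy only along each edge, where the straightness of the edge makes it hold.
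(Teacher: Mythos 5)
Your argument is correct and follows essentially the same route as the paper's proof: the composition of a total-degree-$p$ polynomial with the bilinear map $\param_\Q$ lies in $\mathbb{P}^{(p,p)}$, the directional derivative $\dni\func$ has total degree $p-1$ and restricts along the linearly parametrized edge $\hat\edge_i$ to a univariate polynomial of degree $p-1$, and the sub-element conditions for $p\in\{3,4\}$ are automatic since $\func$ is a single global polynomial. The paper's proof makes exactly these three points, only more tersely.
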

\begin{proof}
Let $\psi \in\mathbb{P}^{p}$, we need to show that  $\psi \circ \param_\Q \in
\mathbb{P}^{(p,p)}$ and $(\dni \psi \circ \param_\Q)|_{\hat{\edge}_i}
\in \mathbb{P}^{p-1}$ for all $\edge_i$, according
to~\eqref{eq:local_space_p5} and~\eqref{eq:local_space_p34}. Note that
we do not need to consider the sub-elements separately, as $\psi$ is a
global polynomial. The composition of a polynomial of total degree $p$
with a bilinear function always results in a polynomial of bi-degree
$(p,p)$, hence we have $\psi \circ \param_\Q \in
\mathbb{P}^{(p,p)}$. Moreover, the directional derivative $\dni \psi$
is a polynomial of total degree $p-1$, restricted to an edge yields a
univariate polynomial of degree $p-1$, which gives $(\dni \psi \circ
\param_\Q)|_{\hat{\edge}_i} \in \mathbb{P}^{p-1}$ since $
\param_\Q|_{\hat{\edge}_i} $ is a linear parametrization.  
\end{proof}
We can now state and prove the local approximation estimate.
\begin{theorem}\label{thm:local-estimate}
Let $\Q\in \Quad$ be a convex quadrilateral. There exists a constant $C>0$, dependent on  $\shaperegularityparameter_\Q$ and on $p$, such that for $0\leq \ell\leq 2$, $4 \leq m \leq p+1$ and for all $\func \in H^m(\Q)$ we have
\begin{equation*}
  \left| \func - \Pi_{P^p_{\Q}}\func \right|_{H^\ell(\Q)} \leq C \,  {h_\Q}^{m-\ell} \left| \func \right|_{H^m(\Q)};
\end{equation*}
moreover, for $3\leq m\leq p+1$, $3\leq p$ and for all $\func \in W^m_\infty(\Q)$,
\begin{equation*}
  \left\| \func - \Pi_{P^p_{\Q}}\func \right\|_{L^\infty(\Q)} \leq C \,  {h_\Q}^{m} \left| \func \right|_{W^m_\infty(\Q)}.
\end{equation*}
\end{theorem}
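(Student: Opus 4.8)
The plan is to run the standard Bramble--Hilbert / Dupont--Scott argument, exploiting that the interpolation operator $\Pi_{\Space^p_\Q}$ reproduces polynomials of total degree $p$. Two facts set it up. First, $\Pi_{\Space^p_\Q}$ is a projector onto $\Space^p_\Q$: since the basis functions satisfy $\lambda_\Q(\basisf_{\lambda'_\Q}) = \delta_{\lambda_\Q\lambda'_\Q}$, expanding any $q\in\Space^p_\Q$ in the basis gives $\Pi_{\Space^p_\Q}q = q$. Second, $\mathbb{P}^p\subset\Space^p_\Q$ by the inclusion lemma just established. Hence, for every $q\in\mathbb{P}^p$,
\[
\func - \Pi_{\Space^p_\Q}\func = (\func - q) - \Pi_{\Space^p_\Q}(\func - q),
\]
and I would choose $q = \Pi_{\mathbb{P}^p}\func$, the averaged Taylor polynomial of Lemma~\ref{lemma:bramble-hilbert}.

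Now comes the only real difficulty: the constants in Lemmas~\ref{lemma:local-projector} and~\ref{lemma:sobolev-inequality} (and in $C_{BH}$) depend on $h_\Q$ in an unquantified way, whereas the theorem asks for explicit powers of $h_\Q$ and a constant depending only on $\shaperegularityparameter_\Q$ and $p$. I would resolve this by a dilation $x\mapsto x/h_\Q$ onto a quadrilateral $\widetilde{\Q}$ of unit diameter with the same shape, so that $\shaperegularityparameter_{\widetilde\Q}=\shaperegularityparameter_\Q$ and all the invoked constants become functions of $\shaperegularityparameter_\Q$ and $p$ alone. The key point to verify is that $\Pi_{\Space^p_\Q}$ commutes with this dilation: the value, first- and second-derivative, and normal-derivative functionals rescale by fixed powers of $h_\Q$, and the associated dual basis functions rescale by the reciprocal powers, so the two cancel and the pulled-back interpolant of $\func$ equals the interpolant on $\widetilde\Q$ of the pulled-back $\widetilde\func$.

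On $\widetilde\Q$ the estimate is then transparent. For $0\le\ell\le2$ and $4\le m\le p+1$, the first summand is handled by the averaged-Taylor estimate $|\widetilde\func - \Pi_{\mathbb{P}^p}\widetilde\func|_{H^\ell(\widetilde\Q)}\le C|\widetilde\func|_{H^m(\widetilde\Q)}$, and the second by chaining projector stability (Lemma~\ref{lemma:local-projector}) with the Sobolev inequality (Lemma~\ref{lemma:sobolev-inequality}),
\[
|\Pi_{\Space^p_{\widetilde\Q}}(\widetilde\func - \widetilde q)|_{H^\ell(\widetilde\Q)} \le \|\Pi_{\Space^p_{\widetilde\Q}}(\widetilde\func - \widetilde q)\| \le C\,\|\widetilde\func - \widetilde q\|_{C^2(\overline{\widetilde\Q})} \le C\,\|\widetilde\func - \Pi_{\mathbb{P}^p}\widetilde\func\|_{H^4(\widetilde\Q)},
\]
and bounding the last term again by the averaged-Taylor estimates of all seminorm orders $0,\dots,4$ (permitted since $m\ge4$), each of which is $\le C|\widetilde\func|_{H^m(\widetilde\Q)}$. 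Adding the two summands gives $|\widetilde\func - \Pi_{\Space^p_{\widetilde\Q}}\widetilde\func|_{H^\ell(\widetilde\Q)} \le C|\widetilde\func|_{H^m(\widetilde\Q)}$, and scaling back through the relations $|\func|_{H^k(\Q)} = h_\Q^{\,1-k}|\widetilde\func|_{H^k(\widetilde\Q)}$ (applied with $k=\ell$ on the left and $k=m$ on the right) produces exactly $C\,h_\Q^{\,m-\ell}|\func|_{H^m(\Q)}$.

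Finally, the $L^\infty$ bound for $3\le m\le p+1$ follows the same template on $\widetilde\Q$, with $H^m$ seminorms replaced by $W^m_\infty$ seminorms. Since $L^\infty$ is among the norms of interest, projector stability applies verbatim; the only substitution is to replace the Sobolev step by the embedding $W^3_\infty(\widetilde\Q)\hookrightarrow C^2(\overline{\widetilde\Q})$, which is why $m\ge3$ (rather than $m\ge4$) suffices, together with the $W^m_\infty$ version of the averaged-Taylor estimate. Scaling back then yields the claimed $h_\Q^{\,m}$ rate. The main obstacle throughout is the bookkeeping of the dilation --- confirming the projector commutes with it and that the mixed-order $C^2$-norm appearing in the projector bound generates no spurious powers of $h_\Q$ once one works on the unit-size element.
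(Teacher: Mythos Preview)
Your argument is correct and follows essentially the same route as the paper: subtract the averaged Taylor polynomial, use that $\Pi_{\Space^p_\Q}$ reproduces $\mathbb{P}^p$, and then chain the projector stability (Lemma~\ref{lemma:local-projector}) with the Sobolev inequality (Lemma~\ref{lemma:sobolev-inequality}) and the Bramble--Hilbert estimate (Lemma~\ref{lemma:bramble-hilbert}). The paper simply asserts in one sentence that the $h_\Q$-dependence follows ``by an homogeneity argument'' and then works at $h_\Q=1$; your explicit dilation to a unit-size $\widetilde\Q$, together with the observation that the dual pairing of degrees of freedom and basis functions makes $\Pi_{\Space^p_\Q}$ commute with the dilation, is precisely what that sentence hides --- so your version is the same proof, just with the scaling step spelled out.
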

\begin{proof}
 The proof follows the proof of~\cite[Theorem 4.4.4]{BrSc07}. We can
 assume $h_\Q = 1$, since the general case and the role of $h_\Q $ in
 the estimates follow by an homogeneity argument. We have
 \begin{equation*}
 \begin{array}{ll}
  \| \func - \Pi_{P^p_{\Q}}\func \|_{H^\ell(\Q)} & \leq 
  \| \func - \Pi_{\mathbb{P}^p}\func \|_{H^\ell(\Q)} + 
  \| \Pi_{\mathbb{P}^p}\func - \Pi_{P^p_{\Q}}\func \|_{H^\ell(\Q)} \\
  & = 
  \| \func - \Pi_{\mathbb{P}^p}\func \|_{H^\ell(\Q)} + 
  \| \Pi_{P^p_{\Q}}(\Pi_{\mathbb{P}^p}\func - \func) \|_{H^\ell(\Q)}
 \end{array}
 \end{equation*}
 Applying the bound from Lemma~\ref{lemma:local-projector}, Lemma
 \ref{lemma:sobolev-inequality} and \ref{lemma:bramble-hilbert},
 we obtain 
 \begin{equation*}
 \begin{array}{ll}
  \| \func - \Pi_{P^p_{\Q}}\func \|_{H^\ell(\Q)} & \leq 
  \| \func - \Pi_{\mathbb{P}^p}\func \|_{H^\ell(\Q)} + 
  C \| \func - \Pi_{\mathbb{P}^p}\func \|_{C^2(\overline{\Q})} \\
  & \leq 
  (1 + C  C_{SI} )\| \func - \Pi_{\mathbb{P}^p}\func \|_{H^m(\Q)} \\
  & \leq 
  (1 +C C_{SI} ) C_{BH} | \func |_{H^m(\Q)},
 \end{array}
 \end{equation*}
 The $L^\infty$-estimate follows the same 
 idea as the $H^\ell$-estimates, where a bound of the form
 \begin{equation*}
  \| \Pi_{\globalspace(\Mesh)}(\func) \|_{L^\infty(\Q)} \leq \sigma(\shaperegularityparameter,p) \|\func \|_{C^2(\overline{\Q})}
 \end{equation*}
 is needed together with estimates similar to
 Lemma~\ref{lemma:sobolev-inequality}
 and~\ref{lemma:bramble-hilbert}. Note that in case of the $L^\infty$
 estimate we only need $m\geq 3$, see again~\cite[Theorem 4.4.4]{BrSc07}.  
\end{proof}

From this local error estimate, a global estimate follows
straightforwardly. Let 
$\Pi_{\globalspace^p(\Mesh)}: C^2(\overline{\Omega}) \rightarrow
\globalspace^p(\Mesh)$ be the global projector defined as  
\begin{equation}\label{eq:projector_global}
 \Pi_{\globalspace^p(\Mesh)}\varphi = \sum_{\lambda \in \Dual^p} \lambda(\varphi) \basisf_\lambda,
\end{equation}
where each $\basisf_\lambda \in \globalspace^p(\Mesh)$  satisfies $\lambda(\basisf_\lambda) = 1$ and $\lambda'(\basisf_\lambda) = 0$ for all $\lambda \neq \lambda' \in \Dual^p$. By definition of the local and global spaces and degrees of freedom, the  global projector and the local projector fulfill, for any $\Q\in \Quad$, 
\begin{equation}
 \left(\Pi_{\globalspace^p(\Mesh)}\varphi\right)|_{\Q} = \Pi_{\Space^p_{\Q}}(\varphi|_{\Q}) = \sum_{\lambda_{\Q} \in \Dual^p_{\Q}} \lambda_{\Q}(\varphi|_{\Q}) \basisf_{\lambda_{\Q}}.
\end{equation}
For a given local functional $\lambda_{\Q}(\cdot) = \lambda(\cdot|_\Q)$ we have $\basisf_{\lambda_\Q} = \basisf_\lambda |_\Q$. Hence, the support of 
$\basisf_\lambda$ is given by all elements on which $\lambda$ is defined, i.e., one element for all face point evaluations, two neighboring elements for all edge midpoint evaluations and, in case of vertex degrees of freedom, all elements around the vertex.
\begin{corollary}
 Let $\Mesh$ be a quadrilateral mesh of $\Omega$, that fulfills
 the requirements of Section \ref{sec:mesh}, with $h=\max_{\Q\in
   \Quad}(h_\Q)$ and $\shaperegularityparameter$ from \eqref{eq:mesh-is-shape-regular}. Let $0\leq \ell\leq 2$ and $4 \leq m \leq p+1$. 
 There exists a constant $C>0$, depending on  $\shaperegularityparameter$ and $p$, such that we have for all  $\func \in H^m(\Omega)$
 \begin{equation*}
  \left| \func - \Pi_{\globalspace^p(\Mesh)}\func \right|_{H^\ell(\Omega)} \leq C \,  {h}^{m-\ell} \left| \func \right|_{H^m(\Omega)},
 \end{equation*}
 as well as for $3\leq m\leq p+1$, $3\leq p$ and for all $\func \in W^m_\infty(\Omega)$
  \begin{equation*}
  \left\| \func - \Pi_{\globalspace^p(\Mesh)}\func \right\|_{L^\infty(\Omega)} \leq C \,  {h}^{m} \left| \func \right|_{W^m_\infty(\Omega)}.
 \end{equation*}
\end{corollary}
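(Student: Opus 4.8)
The plan is to obtain both global estimates from the corresponding local estimates of Theorem~\ref{thm:local-estimate} by an element-by-element summation (respectively maximization), exploiting the fact, displayed immediately above the corollary, that the global projector localizes exactly: $(\Pi_{\globalspace^p(\Mesh)}\func)|_{\Q} = \Pi_{\Space^p_{\Q}}(\func|_{\Q})$ for every $\Q\in\Quad$. Thus on each element the global interpolation error coincides with the local one, to which the theorem applies verbatim, and no overlap-counting of basis-function supports is needed.

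First I would verify that the global seminorm splits into a sum of element seminorms. Since $m\geq 4$ gives $\func\in H^2(\Omega)$, and since $\Pi_{\globalspace^p(\Mesh)}\func\in\globalspace^p(\Mesh)\subset C^1(\Omega)$ is a piecewise polynomial with continuous value and gradient across edges, it lies in $H^2(\Omega)$ as well; hence $\func-\Pi_{\globalspace^p(\Mesh)}\func\in H^2(\Omega)\subset H^\ell(\Omega)$ for $0\leq\ell\leq 2$ and its distributional derivatives up to order $\ell$ carry no singular interface contributions. Therefore one may write
\begin{equation*}
 \left| \func - \Pi_{\globalspace^p(\Mesh)}\func \right|_{H^\ell(\Omega)}^2 = \sum_{\Q\in\Quad} \left| \func|_\Q - \Pi_{\Space^p_\Q}(\func|_\Q) \right|_{H^\ell(\Q)}^2 .
\end{equation*}
Applying the local bound of Theorem~\ref{thm:local-estimate} to each summand and using $h_\Q\leq h$ together with $m-\ell\geq 0$ then yields
\begin{equation*}
 \left| \func - \Pi_{\globalspace^p(\Mesh)}\func \right|_{H^\ell(\Omega)}^2 \leq C^2 \, h^{2(m-\ell)} \sum_{\Q\in\Quad} \left| \func \right|_{H^m(\Q)}^2 = C^2 \, h^{2(m-\ell)} \left| \func \right|_{H^m(\Omega)}^2 ,
\end{equation*}
and taking square roots gives the $H^\ell$-estimate. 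The $L^\infty$-estimate follows identically, with the sum replaced by a maximum over $\Q\in\Quad$ and the $L^\infty$ part of Theorem~\ref{thm:local-estimate} (valid for $m\geq 3$, $p\geq 3$) invoked on each element, using $\|\cdot\|_{L^\infty(\Omega)}=\max_\Q\|\cdot\|_{L^\infty(\Q)}$ and $|\func|_{W^m_\infty(\Omega)}=\max_\Q|\func|_{W^m_\infty(\Q)}$.

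The one point requiring care, and the step I expect to be the main obstacle, is the \emph{uniformity of the constant across the mesh}: Theorem~\ref{thm:local-estimate} provides a constant depending on $\shaperegularityparameter_\Q$ and $p$, whereas the global statement needs a single constant depending only on $\shaperegularityparameter=\inf_{\Q\in\Quad}\shaperegularityparameter_\Q$ and $p$. Tracing this constant back through Lemmata~\ref{lemma:basis-stability}, \ref{lemma:local-projector}, \ref{lemma:sobolev-inequality} and~\ref{lemma:bramble-hilbert}, one sees that (after the normalization $h_\Q=1$) it arises from suprema over the compact parameter set of Lemma~\ref{lemma:basis-stability}, which only grows as the shape-regularity parameter decreases; hence the constant may be bounded, uniformly for all $\shaperegularityparameter_\Q\geq\shaperegularityparameter$, by its value at $\shaperegularityparameter$. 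The mesh shape-regularity assumption~\eqref{eq:mesh-is-shape-regular}, $\shaperegularityparameter>0$, then guarantees this bound is finite, which is exactly what permits pulling the common constant $C=C(\shaperegularityparameter,p)$ out of the sum and the maximum above.
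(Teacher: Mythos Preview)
Your proposal is correct and follows exactly the approach implied by the paper, which does not spell out a proof but simply states that the global estimate ``follows straightforwardly'' from the local one via the localization identity $(\Pi_{\globalspace^p(\Mesh)}\func)|_{\Q} = \Pi_{\Space^p_{\Q}}(\func|_{\Q})$. Your careful treatment of the uniformity of the constant in $\shaperegularityparameter_\Q\geq\shaperegularityparameter$ is a welcome addition that the paper leaves implicit.
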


\section{Basis construction, $p=5$}\label{sec:polynomial-representation}

In the following we describe how to compute the basis functions corresponding to one quadrilateral $\Q$ in the mesh. We define for every vertex six basis functions to interpolate the $C^2$ data, for every edge we define one basis function to interpolate the normal derivative at the edge midpoint. The remainder basis functions inside the element (with vanishing traces and derivatives on the element boundary) are selected to be standard Bernstein polynomials (for $p=5$) or standard B-splines (for $p\in\{3,4\}$). See~\cite{PrBoPa02,Sc07} for basics on B-splines.

To simplify the construction, we build a basis with respect to a slightly modified dual basis. Instead of point evaluations at the interior, we use integral-based functionals that are dual to the Bernstein polynomials (or B-splines).

Before we go into the details, we discuss the Bernstein-B\'ezier representation.
Let $\hat{b}_j$ be the Bernstein polynomials of degree $5$, i.e., for $0\leq j\leq 5$ and $\xi \in[0,1]$,
\begin{equation*}
 \hat{b}_j(\xi) = \binom{5}{j} \xi^j (1-\xi)^{5-j}
\end{equation*}
and let $\hat{\mu}_i$ be the corresponding dual functionals, as in~\cite{Ju98}, i.e., $\hat{\mu}_i(\hat{b}_j)=\delta_i^j$.
Let moreover
\begin{equation*}
\f B= 
\left(
\begin{array}{ccc} 
\hat{b}_{0,5}(\xi_1,\xi_2) & & \hat{b}_{5,5}(\xi_1,\xi_2) \\
\vdots & \ddots &\vdots \\
\hat{b}_{0,0}(\xi_1,\xi_2) & \cdots & \hat{b}_{5,0}(\xi_1,\xi_2)
\end{array}
\right)
=
\left(
\begin{array}{c} 
\hat{b}_5(\xi_2) \\
\vdots \\
\hat{b}_0(\xi_2)
\end{array}
\right)
\left(
\begin{array}{ccc} 
\hat{b}_0(\xi_1) &
\ldots &
\hat{b}_5(\xi_1)
\end{array}
\right)
\end{equation*}
be the matrix of tensor-product Bernstein basis functions spanning $\mathbb{P}^{(5,5)}$.

For each basis function $\basisf\in P^5_{\Q}$, the pull-back $\basisfhat = \basisf \circ \param_\Q$ possesses a biquintic tensor-product
Bernstein-B\'{e}zier representation, having the coefficients~$d_{j_1,j_2}\in \RR$,
\begin{equation*}
 \basisfhat(\xi_1,\xi_2) = \basisf \circ \param_\Q(\xi_1,\xi_2) = \sum_{j_1=0}^{5} \sum_{j_2=0}^{5} d_{j_1,j_2} \, \hat{b}_{j_1,j_2}(\xi_1,\xi_2).
\end{equation*}
By means of a table of the form
\begin{equation*}
\f D{[\basisf]} = 
\begin{array}{|c|c|c|c|}
\hline
 d_{0,5} & d_{1,5} & \cdots & d_{5,5}\\ \hline
 \vdots & \vdots &  & \vdots \\ \hline
 d_{0,1} & d_{1,1} & \cdots & d_{5,1}\\ \hline
 d_{0,0} & d_{1,0} & \cdots & d_{5,0} \\ \hline
\end{array}
\end{equation*}
we can represent the basis function as $\basisfhat = \f B : \f D[{\basisf}]$, the Frobenius product of 
the matrix of basis functions with the coefficient matrix. Given the basis $b_{i_1,i_2} = \hat{b}_{i_1,i_2}\circ {\param_\Q}^{-1}$ we can define 
a dual basis $\mu_{j_1,j_2}$ as $ \mu_{j_1,j_2}(\func) = \hat{\mu}_{j_1}\otimes\hat{\mu}_{j_2}(\func\circ\param_\Q)$, satisfying $ \mu_{j_1,j_2}(b_{i_1,i_2}) = 
\delta_{i_1}^{j_1}\delta_{i_2}^{j_2}$.

We now turn on defining the basis functions for $\Space^5_{\Q}$ and dual
functionals~$\Dual^5_{\Q}$.  On each quadrilateral~$\Q$, we define
$24$ vertex basis functions (six for each vertex)
\begin{equation*}
 \mathrm{B}^5_{0,\Q} = \{ \basisfn{0}{k,i},\quad\mbox{ for } k=1,\ldots,4\mbox{ and }i=0,\ldots,5 \},
\end{equation*}
determined by $\Dual_{0,\Q}$, four edge basis functions (one for each edge) 
\begin{equation*}
 \mathrm{B}^5_{1,\Q} = \{ \basisfn{1}{i},\quad\mbox{ for } i=1,\ldots,4 \},
\end{equation*}
determined by $\Dual^5_{1,\Q}$, and four patch-interior basis functions 
\begin{equation*}
 \mathrm{B}^5_{2,\Q} = \{ \basisfn{2}{i},\quad\mbox{ for } i=1,\ldots,4 \}.
\end{equation*}
To simplify the construction, we replace the point evaluation functionals $\Dual^5_{2,\Q}$ by the dual functionals of mapped tensor-product Bernstein polynomials
\begin{equation*}
 \mathrm{M}^5_{2,\Q} = \{ \mu_{j_1,j_2}(\func) = \hat{\mu}_{j_1}\otimes\hat{\mu}_{j_2}(\func\circ\param_\Q): j_1,j_2 \in \{2,3\}\}. 
\end{equation*}
We define the basis 
\begin{equation*}
 \mathrm{B}^5_{\Q} = \mathrm{B}^5_{0,\Q} \cup \mathrm{B}^5_{1,\Q} \cup \mathrm{B}^5_{2,\Q}
\end{equation*}
in such a way that it is dual to 
\begin{equation*}
 \mathrm{M}^5_\Q = \Dual_{0,\Q} \cup \Dual_{1,\Q} \cup \mathrm{M}^5_{2,\Q}.
\end{equation*}

\subsection{Patch interior basis functions} 

It is clear that we have, by definition, 
\begin{equation*}
 \mathrm{B}^5_{2,\Q} = \{ \basisfn{2}{1},\basisfn{2}{2},\basisfn{2}{3},\basisfn{2}{4} \} = \{ b_{2,2},b_{2,3},b_{3,2},b_{3,3} \}.
\end{equation*}
In terms of their B\'ezier coefficients we have e.g.:
\begin{small}
\begin{equation*}
\f D [b_{2,2}] = 
\begin{array}{|c|c|c|c|c|c|} 
\hline
 0 & 0 & 0 & 0 & 0 & 0\\ \hline
 0 & 0 & 0 & 0 & 0 & 0\\ \hline
 0 & 0 & 0 & 0 & 0 & 0\\ \hline
 0 & 0 & 1 & 0 & 0 & 0\\ \hline
 0 & 0 & 0 & 0 & 0 & 0\\ \hline
 0 & 0 & 0 & 0 & 0 & 0 \\ \hline
\end{array}
\end{equation*}
\end{small}
We trivially have $\mbox{span}(\mathrm{B}^5_{2,\Q}) = \ker( \Dual_{0,\Q} \cup \Dual_{1,\Q})$.

\subsection{Edge basis functions}

We recall the notation introduced in Section \ref{sec:mesh}: Let 
\begin{equation*}
 \f{t}^{(k)} = (t_1^{(k)},t_2^{(k)})^T = \vertex_{k+1} - \vertex_k
\end{equation*}
be the vector corresponding to the edge $\edge_k$ and let $a^{(k)}=\det (\f{t}^{(k-1)}, \f{t}^{(k)} )$. Then the edge basis function $\basisfn{1}{1}$, corresponding to edge $\edge_1$, is given by
\begin{small}
\begin{equation*}
\f D [\basisfn{1}{1}] = \frac{8}{25 \|\f t^{(1)} \|} \;
\begin{array}{|c|c|c|c|c|c|} 
\hline
 0 & 0 & 0 & 0 & 0 & 0\\ \hline
 0 & 0 & 0 & 0 & 0 & 0\\ \hline
 0 & 0 & 0 & 0 & 0 & 0\\ \hline
 0 & 0 & 0 & 0 & 0 & 0\\ \hline
 0 & 0 & a^{(1)} & a^{(2)} & 0 & 0\\ \hline
 0 & 0 & 0 & 0 & 0 & 0 \\ \hline
\end{array}
\end{equation*}
\end{small}
and analogously for $\basisfn{1}{2}$, $\basisfn{1}{3}$ and $\basisfn{1}{4}$. We have $\basisfn{1}{j} \in \ker( \Dual^0_{\Q} \cup \mathrm{M}^2_\Q)$ and 
$\dni \basisfn{1}{j}(\midp_{\edge_i}) = \delta_i^j$, if the unit normal vector $\f{n}_i$ is assumed to point inwards.

\subsection{Vertex basis functions}

Before we define the coefficient matrices for the basis functions, we need to define some precomputable coefficients. We assume that all normal vectors point inwards and have 
\begin{equation*}
 {\f n}_{\edge_k} = (n^{(k)}_1,n^{(k)}_2)^T = \frac{1}{\|\f{t}^{(k)}\|}(-t_2^{(k)},t_1^{(k)})^T.
\end{equation*}
Let 
\begin{equation*}
\f{q}^{(k)} = (q_1^{(k)},q_2^{(k)})^T = \vertex_{k} - \vertex_{k+1}+\vertex_{k+2}-\vertex_{k+3}
\end{equation*}
and moreover
\begin{equation*}\renewcommand{\arraystretch}{1.5}
\begin{array}{lll}
 b_{0}^{(k)} &=& \frac{\f{t}^{(k-1)}\f{t}^{(k)}}{\| \f{t}^{(k)} \|^2} , \\
 b_{1}^{(k)} &=& \frac{\f{t}^{(k+1)}\f{t}^{(k)}}{\| \f{t}^{(k)} \|^2}, \\
 T^{(k)}_{i,j} &=& t^{(k)}_i t^{(k)}_j, \\
 Q^{(k)}_{i,j} &=& t^{(k-1)}_i t^{(k)}_j + t^{(k-1)}_j t^{(k)}_i, \\
 N^{(k)}_{i,j} &=& n^{(k)}_i t^{(k)}_j + n^{(k)}_j t^{(k)}_i,
\end{array}
\end{equation*}
for $i,j\in\{1,2\}$ and $k\in\{1,2,3,4\}$. Here $k$ is considered modulo $4$. We define
\begin{small}
\begin{equation*}
{\f M}^L_k = 
\begin{array}{|c|c|c|c|c|c|} 
\hline
 0 & 0 & 0 & 0 & 0 & 0\\ \hline
 0 & 0 & 0 & 0 & 0 & 0\\ \hline
 0 & -\frac{3}{5}b^{(k-1)}_0 & 0 & 0 & 0 & 0\\ \hline
 1 & 1+\frac{3}{5}b^{(k-1)}_1 & 0 & 0 & 0 & 0\\ \hline
 \frac{1}{2} & \frac{1}{2} & 0 & 0 & 0 & 0\\ \hline
 0 & 0 & 0 & 0 & 0 & 0\\ \hline
\end{array} \;,
\end{equation*}
\end{small}
\begin{small}
\begin{equation*}
{\f M}^B_k = 
\begin{array}{|c|c|c|c|c|c|} 
\hline
 0 & 0 & 0 & 0 & 0 & 0\\ \hline
 0 & 0 & 0 & 0 & 0 & 0\\ \hline
 0 & 0 & 0 & 0 & 0 & 0\\ \hline
 0 & 0 & 0 & 0 & 0 & 0\\ \hline
 0 & \frac{1}{2} & 1+\frac{3}{5}b^{(k)}_0 & -\frac{3}{5}b^{(k)}_1 & 0 & 0\\ \hline
 0 & \frac{1}{2} & 1 & 0 & 0 & 0 \\ \hline
\end{array}
\end{equation*}
\end{small}
and
\begin{small}
\begin{equation*}
\f X = 
\begin{array}{|c|c|c|c|c|c|} 
\hline
 0 & 0 & 0 & 0 & 0 & 0\\ \hline
 0 & 0 & 0 & 0 & 0 & 0\\ \hline
 0 & 0 & 0 & 0 & 0 & 0\\ \hline
 0 & 0 & 0 & 0 & 0 & 0\\ \hline
 \frac{1}{2} & 0 & 0 & 0 & 0 & 0\\ \hline
 1 & \frac{1}{2} & 0 & 0 & 0 & 0 \\ \hline
\end{array} \;.
\end{equation*}
\end{small}
The vertex basis function $\basisfn{0}{1,0}$ is then given by 
\begin{equation*}
\f D [\basisfn{0}{1,0}] = {\f M}^L_1 + {\f M}^B_1 + \f X.
\end{equation*}
In general, the basis functions $\basisfn{0}{k,0}$ are given by 
\begin{equation*}
\f D [\basisfn{0}{k,0}] = R_k({\f M}^L_k + {\f M}^B_k + \f X)
\end{equation*}
where $R_k$ is a suitable operator $R_k:\mathbb{R}^{6\times 6} \rightarrow \mathbb{R}^{6\times 6}$ taking care of the local reparametrization, rotating the positions of the vertices.
Let
\begin{small}
\begin{equation*}
 \f Y_{k,i} =
 \begin{array}{|c|c|c|c|c|c|}
\hline
 0 & 0 & 0 & 0 & 0 & 0\\ \hline
 0 & 0 & 0 & 0 & 0 & 0\\ \hline
 0 & 0 & 0 & 0 & 0 & 0\\ \hline
 0 & -\frac{1}{5} t^{(k-2)}_i& 0 & 0 & 0 & 0\\ \hline
 0 & \frac{1}{10}q^{(k)}_i & \frac{1}{5} t^{(k+1)}_i & 
 0 & 0 & 0\\ \hline
 0 & 0 & 0 & 0 & 0& 0 \\ \hline
\end{array}
\end{equation*}
\end{small}
then the vertex basis functions $\basisfn{0}{k,1}$ and $\basisfn{0}{k,2}$ interpolating the derivatives in $x_1$- and $x_2$-direction, respectively, are given by
\begin{equation*}
\begin{array}{ll}
\f D [\basisfn{0}{k,i}] = & \frac{2}{5} R_k\left(-t^{(k-1)}_i {\f M}^L_k + t^{(k)}_i {\f M}^B_k + \f Y_{k,i}\right)\\
&-\frac{5}{16}n^{(k)}_i \f D[\basisfn{1}{k}]-\frac{5}{16}n^{(k-1)}_i \f D[\basisfn{1}{k-1}],
\end{array}
\end{equation*}
for $i=1,2$. Finally we define the vertex basis functions $\basisfn{0}{k,3}$, $\basisfn{0}{k,4}$ and $\basisfn{0}{k,5}$, interpolating the second derivatives. Let 
\begin{small}
\begin{equation*}
 \f Z_{k,(i,j)} =
 \begin{array}{|c|c|c|c|c|c|}
\hline
 0 & 0 & 0 & 0 & 0 & 0\\ \hline
 0 & 0 & 0 & 0 & 0 & 0\\ \hline
 0 & 0 & 0 & 0 & 0 & 0\\ \hline
 0 & \frac{1}{5} Q^{(k-1)}_{i,j} & 0 & 0 & 0 & 0\\ \hline
 -\frac{1}{2} T^{(k-1)}_{i,j} & -\frac{2}{5}Q^{(k)}_{i,j} - \frac{1}{2} T^{(k-1)}_{i,j} -\frac{1}{2} T^{(k)}_{i,j} & 
 \frac{1}{5} Q^{(k+1)}_{i,j} & 0 & 0 & 0\\ \hline
 0 & -\frac{1}{2} T^{(k)}_{i,j} & 0 & 0 & 0 & 0 \\ \hline
\end{array} \;,
\end{equation*}
\end{small}
then we have 
\begin{equation*}
\begin{array}{ll}
 \f D [\basisfn{0}{k,i+j+1}] = & \frac{\lambda}{20} R_k\left( T^{(k-1)}_{i,j} {\f M}^L_k +  T^{(k)}_{i,j} {\f M}^B_k + \f Z_{k,(i,j)} \right) \\ 
 &- \frac{\lambda}{32}N^{(k)}_{i,j}\f D[\basisfn{1}{k}]+\frac{\lambda}{32}N^{(k-1)}_{i,j}\f D[\basisfn{1}{k-1}]
\end{array}
\end{equation*}
for ${i,j}\in\{1,2\}$, where $\lambda = 2-\delta_i^j$. All representations of basis functions are verifiable via symbolic computation, e.g. by using Mathematica. We have $\basisfn{0}{k,j} \in \ker( \Dual_{1,\Q} \cup \mathrm{M}^5_{2,\Q})$ and 
\begin{equation*}
 (\basisfn{0}{k,0},\basisfn{0}{k,1},\basisfn{0}{k,2},\basisfn{0}{k,3},\basisfn{0}{k,4},\basisfn{0}{k,5})
\end{equation*}
being dual to 
\begin{equation*}
 (\func(\vertex_k),\dx\func(\vertex_k),\dy\func(\vertex_k),\dx\dx\func(\vertex_k),\dx\dy\func(\vertex_k),\dy\dy\func(\vertex_k)), 
\end{equation*}
with vanishing $C^2$-data at all other vertices.

\section{Quadrilateral macro-element: Definition and basis construction} \label{sec:Argyris_spline}

We can extend the definition of polynomial BS  quadrilaterals of degree $p\geq 5$ to certain B-spline based macro-elements of any degree $p\geq 3$. In that case the degrees of freedom are given as $C^2$-data in the 
vertices, normal derivative and point data at certain points along the edges, as well as suitably many interior functions that have vanishing values and gradients at all element boundaries. In such a setting, refinement can be performed either by splitting the macro-elements or by knot insertion within every macro-element. Note that, in the construction below, the continuity within the macro-element is of order $p-2$ for all degrees. In general, any order of continuity $r$, with $1\leq r\leq p-2$, can be achieved.

We assume that every quadrilateral $\Q$ is split into $k\times k$ elements by mapping a regular split of the parameter domain $\widehat{\Q} = [0,1]^2$ using $\param_\Q$. Let $\mathcal{S}^{p,r}_k$ be the univariate B-spline space of degree $p$ and regularity $r$ over the interval $[0,1]$ split into $k$ polynomial segments of the same length, i.e., having the knot vector 
\begin{equation*}
 \left(0,\ldots,0,\frac{1}{k},\frac{1}{k},\frac{2}{k},\frac{2}{k},\ldots,\frac{k-1}{k},\frac{k-1}{k},1,\ldots,1\right)
\end{equation*}
for $r=p-2$ and 
\begin{equation*}
 \left(0,\ldots,0,\frac{1}{k},\frac{2}{k},\ldots,\frac{k-1}{k},1,\ldots,1\right)
\end{equation*}
for $r=p-1$, where the first and last knots are repeated $p+1$ times. These knot vectors define piecewise polynomials on the split $s_k(\widehat{\Q})$ in the tensor-product case. Let $\theta^p_i$, for $i=0,\ldots,2k+p-2$ and $\eta^p_i$, for $i=0,\ldots,k+p-1$, be the corresponding Greville abscissae for the first and second knot vector, respectively. Note that the Greville abscissae $\gamma_i$ corresponding to a given knot vector $(\xi_0,\ldots,\xi_N)$ are defined as knot averages $\gamma_i = (\xi_{i+1}+\cdots+\xi_{i+p})/p$ for $i=0,\ldots,N-p-1$.

As in Definition~\ref{def:local-space-and-dofs} we can define the local function space and the local degrees of freedom, where we need to assume~$k\geq \max(1,6-p)$ in order to be able to split the vertex degrees of freedom.
\begin{definition}[Local space and degrees of freedom]\label{def:local-space-and-dofs-spline}
Given a quadrilateral $\Q$ with vertices $\vertex_1$, $\vertex_2$, $\vertex_3$ and $\vertex_4$ we define the $C^1$ quadrilateral spline macro-element of degree 
$p$ as 
$(\Q,\Space_\Q,\Dual_\Q)$, with
\begin{equation*}
 \Space_\Q = \left\{ \func:\Q\rightarrow \mathbb{R}, \mbox{ with} 
 \begin{array}{ll} (\func \circ \param_\Q) &\in \mathcal{S}^{p,p-2}_k \otimes \mathcal{S}^{p,p-2}_k,\\
 (\func \circ \param_\Q)|_{\hat{\edge}} &\in \mathcal{S}^{p,p-1}_k, \\
 (\dn \func \circ \param_\Q)|_{\hat{\edge}} &\in \mathcal{S}^{p-1,p-2}_k,
\end{array} \mbox{for each }\hat{\edge} \mbox{ of }\hat{\Q} \right\}
\end{equation*}
and 
\begin{equation*}
\begin{array}{l}
 \Dual_\Q = \Dual_{0,\Q} \cup \Dual^\ast_{1,\Q} \cup  \Dual^\ast_{2,\Q}, \mbox{ with } \\
 \;\Dual_{0,\Q} = \left\{\func(\vertex_i), \dx\func(\vertex_i), \dy\func(\vertex_i), \dx\dx\func(\vertex_i),\dx\dy\func(\vertex_i), \dy\dy\func(\vertex_i),\; 1 \leq i \leq 4 
 \right\}, \\
 \; \Dual^\ast_{1,\Q} = \left\{\varphi(r_{i,j_0}),\; \mbox{ for } 1 \leq i \leq 4,\; 1\leq j_0\leq k+p-6 \right\} \\ 
 \hspace{40pt}\cup \left\{\dni \varphi(q_{i,j_1}),\; \mbox{ for } 1 \leq i \leq 4,\; 1\leq j_1\leq k+p-5 \right\}, \\
 \; \Dual^\ast_{2,\Q} = \left\{ \func (\facep), \; \facep \in \Facep^\ast_\Q \right\}.
\end{array}
\end{equation*}
\end{definition}
Here $r_{i,j_0} = \param_{\edge_i}(\eta^p_{j_0+2})$, $q_{i,j_1} = \param_{\edge_i}(\eta^{p-1}_{j_1+1})$, with $\param_{\edge_i} = \param_\Q|_{\edge_i}$, and the 
set of face points is given as 
\begin{equation*}
 \Facep^\ast_{\Q} = \left\{ \param_\Q \left(\theta^p_{j_1},\theta^p_{j_2}\right), 2 \leq j_1,j_2\leq 2k+p-4 \right\}.
\end{equation*}
We have the following.
\begin{lemma}\label{lemma:unisolvency-spline-elements}
 Let $(\Q,\Space_\Q,\Dual_\Q)$ be the element defined in Definition~\ref{def:local-space-and-dofs-spline}. The degrees of freedom $\Dual_\Q$ completely determine the space $\Space_\Q$.
\end{lemma}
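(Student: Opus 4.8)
The plan is to establish the unisolvency of Definition~\ref{def:local-space-and-dofs-spline} in the usual two steps: show that the evaluation map $\func\mapsto(\lambda(\func))_{\lambda\in\Dual_\Q}$ has trivial kernel, and that $\dim(\Space_\Q)=|\Dual_\Q|$; an injective linear map between spaces of equal finite dimension is then a bijection, which is precisely the assertion that $\Dual_\Q$ determines $\Space_\Q$. I would carry out both steps in the parametric domain, exploiting the tensor-product B-spline representation of $\func\circ\param_\Q$ and a \emph{block-triangular} structure: the $C^2$ vertex data fix the corner coefficient blocks, the two edge families fix the two coefficient layers adjacent to each edge, and the face functionals fix the genuinely interior coefficients.

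For the kernel, let $\func\in\Space_\Q$ annihilate every functional in $\Dual_\Q$. On a fixed edge the trace $(\func\circ\param_\Q)|_{\hat\edge_i}$ lies in $\mathcal{S}^{p,p-1}_k$, of dimension $p+k$. Since $\param_\Q|_{\hat\edge_i}$ is affine, the vanishing $C^2$ data at the two endpoints supply value and first two tangential derivatives there (six conditions), while the vanishing point values $\func(r_{i,j_0})$, $1\le j_0\le k+p-6$, impose $k+p-6$ interior conditions at the Greville abscissae $\eta^p_3,\dots,\eta^p_{k+p-4}$; the totals match the dimension. I would then invoke a Schoenberg--Whitney argument to conclude that this Hermite-plus-Greville collocation problem is unisolvent, so the trace vanishes on every edge, and hence $\func$ and all its tangential derivatives vanish on $\partial\Q$.

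Next, on each edge $(\dn\func\circ\param_\Q)|_{\hat\edge_i}\in\mathcal{S}^{p-1,p-2}_k$, of dimension $p+k-1$; its value and first tangential derivative at each endpoint are fixed by the vanishing $C^2$ data (four conditions), and the vanishing functionals $\dni\func(q_{i,j_1})$, $1\le j_1\le k+p-5$, give $k+p-5$ interior conditions at $\eta^{p-1}_2,\dots,\eta^{p-1}_{k+p-4}$, again summing to the dimension, so a second Schoenberg--Whitney argument yields $\dn\func|_{\edge_i}\equiv0$. Thus $\func\circ\param_\Q$ and its full gradient vanish on $\partial\hatquad$; because the knot vectors are open, this forces the two outermost coefficient layers along each side of $\hatquad$ to be zero. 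The remaining coefficients are in one-to-one correspondence with the interior bivariate Greville abscissae $(\theta^p_{j_1},\theta^p_{j_2})$, $2\le j_1,j_2\le 2k+p-4$, and since all $\func(\facep)$ with $\facep\in\Facep^\ast_\Q$ vanish, the tensor-product Schoenberg--Whitney property gives $\func\circ\param_\Q\equiv0$, i.e.\ $\func=0$.

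It remains to match dimensions. I would compute $\dim(\Space_\Q)$ as the dimension $(p+2k-1)^2$ of $\mathcal{S}^{p,p-2}_k\otimes\mathcal{S}^{p,p-2}_k$ minus the constraints imposed by the elevated edge regularity of the trace ($C^{p-1}$ rather than $C^{p-2}$) and the reduced degree of the normal derivative, and verify that it equals $|\Dual_\Q|=24+4(k+p-6)+4(k+p-5)+(2k+p-5)^2$. Alternatively, the peeling argument above can be run constructively---solving the two univariate interpolation problems on each edge and then the interior tensor-product problem---to produce a preimage for arbitrary data, giving surjectivity directly and dispensing with a separate count. I expect the main obstacle to lie at the corners: the six $C^2$ values at a vertex must simultaneously be consistent with the trace and normal-derivative constraints of both incident edges, and one must check that the elevated-regularity conditions remove exactly the coefficients needed so that each corner $3\times3$ block is pinned down without over- or under-determination. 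The Schoenberg--Whitney verifications are routine once the nodes are identified, so the delicate bookkeeping at the vertex blocks (and the associated constraint count) is where the real work lies.
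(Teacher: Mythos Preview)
Your proposal is correct and follows essentially the same peeling strategy as the paper: fix the trace in $\mathcal{S}^{p,p-1}_k$ from the $C^2$ vertex data plus interior Greville values, fix the normal derivative in $\mathcal{S}^{p-1,p-2}_k$ from the $C^2$ vertex data plus the Greville normal-derivative values, and then determine the remaining interior coefficients from the tensor-product Greville face evaluations. The paper runs the argument constructively (your ``alternative'' route) rather than via kernel-plus-dimension-count, and it makes explicit the one step you leave implicit at the corners: a chain-rule computation shows $\partial_1\widehat\varphi|_{\hat\edge}=\alpha f_1-\beta f_0'$ with $\alpha,\beta$ linear, so that the transverse parametric derivative lies in $\mathcal{S}^{p,p-2}_k$ and the two outermost coefficient layers are genuinely determined by $(f_0,f_1)$---this is exactly the vertex-block bookkeeping you correctly identify as the crux.
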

\begin{proof}
This lemma is a direct consequence of the results developed in~\cite[Section 4]{KaSaTa19}. For the sake of completeness, we present the main steps of the proof in the following. Let $\widehat{\varphi} = \varphi \circ \param_\Q$. Let us consider the conditions on ${\varphi}$ (and consequently on $\widehat{\varphi}$) along one edge $\edge$, w.l.o.g. $\hat{\edge}=\{0\}\times]0,1[$. For the unit normal vector $\f n$ along $\edge$, we then have 
\begin{equation*}
 \f n = \lambda(\xi_2) \partial_1 \param_\Q(0,\xi_2) + \mu(\xi_2) \partial_2 \param_\Q(0,\xi_2) = \lambda(\xi_2) \f v(\xi_2) - \mu(\xi_2) \f t^{(4)},
\end{equation*}
with $\f v(\xi_2) = \f t^{(1)} (1-\xi_2) -\f t^{(3)}\xi_2$,
where 
\begin{equation*}
\lambda(\xi_2) = \frac{\|\f t^{(4)}\|}{\det(\f v(\xi_2),\f t^{(4)})} = \frac{1}{\alpha(\xi_2)} \quad\mbox{ and }\quad
\mu(\xi_2) = \frac{\f v(\xi_2)\cdot\f t^{(4)}}{\det(\f v(\xi_2),\f t^{(4)}) \|\f t^{(4)}\|}= \frac{\beta(\xi_2)}{\alpha(\xi_2)}.
\end{equation*}
It is easy to check, that $\f n \cdot \f t^{(4)} = 0$ and 
\begin{equation*}
 \f n\cdot\f n = \frac{1}{\det(\f v(\xi_2),\f t^{(4)})^2} \left(\|\f v(\xi_2)\|^2 \|\f t^{(4)}\|^2 - (\f v(\xi_2)\cdot\f t^{(4)})^2 \right) =1.
\end{equation*}
Then, the chain rule yields
\begin{equation*}
 (\partial_{\f n} \func \circ \param_\Q)|_{\hat{\edge}} = \nabla \widehat\varphi \; (\nabla \param_\Q)^{-1} \cdot \f n|_{\hat{\edge}} = \lambda(\xi_2) \partial_1 \widehat{\varphi}(0,\xi_2) + \mu(\xi_2) \partial_2 \widehat{\varphi}(0,\xi_2).
\end{equation*}
Let $f_0 = \widehat{\varphi}|_{\hat{\edge}} \in \mathcal{S}^{p,p-1}_k$ and $f_1 = (\partial_{\f n} \func \circ \param_\Q)|_{\hat{\edge}} \in \mathcal{S}^{p-1,p-2}_k$. Then 
\begin{equation*}
 \widehat{\varphi}(0,\xi_2) = f_0(\xi_2)
\end{equation*}
and
\begin{equation*}
 \partial_1 \widehat{\varphi}(0,\xi_2) = \alpha(\xi_2)f_1(\xi_2) - \beta(\xi_2) f_0'(\xi_2) \in \mathcal{S}^{p,p-2}_k,
\end{equation*}
since $\alpha(\xi_2)$ and $\beta(\xi_2)$ are linear functions.

The trace $f_0 \in \mathcal{S}^{p,p-1}_k$, with $\dim(\mathcal{S}^{p,p-1}_k)=k+p$, is completely determined by the $C^2$-data at the vertices ($3$ degrees of freedom at each vertex) together with the $k+p-6$ point evaluations
$\varphi(r_{i,j_0})$, since the points are selected as suitable mapped Greville points. Similarly, the normal derivative along an edge $f_1\in \mathcal{S}^{p-1,p-2}_k$, a spline space of dimension $k+p-1$, is completely determined by the $C^2$-data at the vertices (determining $2$ degrees of freedom each) as well as by the $k+p-5$ normal derivative evaluations at Greville points, i.e., $\dni \varphi(q_{i,j_1})$. The space $\mathcal{S}^{p,p-2}_k \otimes \mathcal{S}^{p,p-2}_k$ has a standard tensor-product basis $\{ \hat{b}_{i_1}(\xi_1) \hat{b}_{i_2}(\xi_2) , 0\leq i_1,i_2 \leq 2k+p-2 \}$. Hence, every function $\widehat{\varphi} \in \mathcal{S}^{p,p-2}_k \otimes \mathcal{S}^{p,p-2}_k$ is represented by coefficients $c_{i_1,i_2}$, such that
\begin{equation*}
 \widehat{\varphi}(\xi_1,\xi_2) = \sum_{i_1=0}^{n-1}\sum_{i_2=0}^{n-1} c_{i_1,i_2} \hat{b}_{i_1}(\xi_1) \hat{b}_{i_2}(\xi_2).
\end{equation*}
with $n=2k+p-1$. All coefficients $c_{0,i_2}$, $c_{1,i_2}$, for $0\leq i_2\leq n$, are determined by $f_0$ and $f_1$. Analogously, the coefficients $c_{n-1,i_2}$, $c_{n-2,i_2}$, $c_{i_1,0}$, $c_{i_1,1}$, $c_{i_1,n-1}$ and $c_{i_1,n-2}$ corresponding to the remaining edges are also determined by $\Dual_{0,\Q}$ and $\Dual^\ast_{1,\Q}$. All remaining coefficients $c_{i_1,i_2}$, for $2\leq i_1,i_2\leq n-3$, corresponding to the space
\begin{equation*}
 \{ \widehat{\varphi} \in \mathcal{S}^{p,p-2}_k \otimes \mathcal{S}^{p,p-2}_k : \widehat{\varphi}|_{\partial\widehat{\Omega}} = 0, \nabla\widehat{\varphi}|_{\partial\widehat{\Omega}} = \f 0 \}
\end{equation*}
are determined by the $(n-4)^2$ evaluations at mapped Greville points $\param_\Q \left(\theta^p_{j_1},\theta^p_{j_2}\right)$. Consequently, the full space $\Space_\Q$ is completely determined by the dual functionals $\Dual_\Q$ and the proof is complete.
\end{proof}
It follows immediately from
Lemma~\ref{lemma:unisolvency-spline-elements} that the dimension of
the space can be determined completely by counting, having six degrees
of freedom per vertex, $2k+2p-11$ degrees of freedom per edge and
$(2k+p-5)^2$ degrees of freedom inside the element. Thus, we need
$2k+2p-11 \geq 0$, yielding the constraint $k\geq 6-p$. 

In the remainder of this section, we present in more detail the two special cases of $C^1$ quadrilateral macro-elements presented in Definition~\ref{def:local-space-and-dofs-34}. Since we need $k\geq \max(1,6-p)$, they represent the spline elements with the least number of inner knots, allowing a separation of degrees of freedom at the vertices. For $p=4$ we consider the spline space with one inner knot at $\frac{1}{2}$ with multiplicity two in each direction $\mathcal{S}^{4,2}_{2} \otimes \mathcal{S}^{4,2}_{2}$, having the basis $\hat{b}^4_{j_1,j_2}$ and corresponding dual basis $\hat{\mu}^4_{j_1,j_2}$ for $0\leq j_1,j_2\leq 6$. For $p=3$ we consider the spline space $\mathcal{S}^{3,1}_{3} \otimes \mathcal{S}^{3,1}_{3}$, with basis $\hat{b}^3_{j_1,j_2}$ and dual basis $\hat{\mu}^3_{j_1,j_2}$ for $0\leq j_1,j_2\leq 7$, 
see~\cite{PrBoPa02,Sc07}. 

Hence, for smaller degrees that patches $\Q$ are macro-elements with $2\times2$ (for $p=4$) or $3\times3$ (for $p=3$) polynomial sub-elements. Let $n=11-p$. We write, as for $p=5$, all tensor-product basis functions in a matrix
\begin{equation*}
\f B=
\left(
\begin{array}{ccc} 
\hat{b}^p_{0,n-1} & \ldots & \hat{b}^p_{n-1,n-1} \\
\vdots & & \vdots \\
\hat{b}^p_{0,0} & \ldots & \hat{b}^p_{n-1,0}
\end{array}
\right)
\end{equation*}
and denote again with $\f D[\basisf]$ the $(n\times n)$-matrix of coefficients. As for $p=5$, let ${b}^p_{j_1,j_2} = \hat{b}^p_{j_1,j_2} \circ {\param_{\Q}}^{-1}$ denote the basis 
functions on the element $\Q$.

\subsection{Patch interior basis functions}

We have $(n-4)^2$ basis functions
\begin{equation*}
 \mathrm{B}^p_{2,\Q} = \{ b^p_{j_1,j_2}, \; 2\leq j_1,j_2 \leq n-3 \},
\end{equation*}
which satisfy $\mbox{span}(\mathrm{B}^p_{2,\Q}) = \ker(  \Dual_{0,\Q} \cup \Dual_{1,\Q})$.

\subsection{Edge basis functions}

The edge basis function $\basisfn{1}{1}$, corresponding to edge $\edge_1$, is given for $p=4$ by
\begin{small}
\begin{equation*}
\f D [\basisfn{1}{1}] = \frac{1}{32 \|\f t^{(1)} \|} \;
\begin{array}{|c|c|c|c|c|c|c|} 
\hline
 0 & 0 & 0 & 0 & 0 & 0 & 0\\ \hline
 0 & 0 & 0 & 0 & 0 & 0 & 0\\ \hline
 0 & 0 & 0 & 0 & 0 & 0 & 0\\ \hline
 0 & 0 & 0 & 0 & 0 & 0 & 0\\ \hline
 0 & 0 & 0 & 0 & 0 & 0 & 0\\ \hline
 0 & 0 & 2a^{(1)} & 3a^{(1)}+3a^{(2)} & 2a^{(2)} & 0 & 0\\ \hline
 0 & 0 & 0 & 0 & 0 & 0 & 0\\ \hline
\end{array}\;,
\end{equation*}
\end{small}
and for $p=3$ by 
\begin{small}
\begin{equation*}
\f D [\basisfn{1}{1}] = \frac{2}{81 \|\f t^{(1)} \|} \;
\begin{array}{|c|c|c|c|c|c|c|c|} 
\hline
 0 & 0 & 0 & 0 & 0 & 0 & 0 & 0\\ \hline
 0 & 0 & 0 & 0 & 0 & 0 & 0 & 0\\ \hline
 0 & 0 & 0 & 0 & 0 & 0 & 0 & 0\\ \hline
 0 & 0 & 0 & 0 & 0 & 0 & 0 & 0\\ \hline
 0 & 0 & 0 & 0 & 0 & 0 & 0 & 0\\ \hline
 0 & 0 & 0 & 0 & 0 & 0 & 0 & 0\\ \hline
 0 & 0 & a^{(1)} & 3a^{(1)}+2a^{(2)} & 2a^{(1)}+3a^{(2)} & a^{(2)} & 0 & 0\\ \hline
 0 & 0 & 0 & 0 & 0 & 0 & 0 & 0\\ \hline
\end{array} \; .
\end{equation*}
\end{small}
The functions $\basisfn{1}{2}$, $\basisfn{1}{3}$ and $\basisfn{1}{4}$ are defined analogously. Analogously to the polynomial case, we have $\basisfn{1}{j} \in \ker( \Dual_{0,\Q} \cup \mathrm{M}^p_{2,\Q})$ and 
$\dni \basisfn{1}{j}(\midp_{\edge_i}) = \delta_i^j$, if the unit normal vector $\f{n}_i$ is assumed to point inwards.

\subsection{Vertex basis functions, $p=4$}

We define
\begin{small}
\begin{equation*}
{\f M}^L_k = 
\begin{array}{|c|c|c|c|c|c|c|} 
\hline
 0 & 0 & 0 & 0 & 0 & 0 & 0\\ \hline
 0 & 0 & 0 & 0 & 0 & 0 & 0\\ \hline
 0 & -\frac{1}{8} b^{(k-1)}_0 & 0 & 0 & 0 & 0 & 0\\ \hline
 \frac{1}{2} & \frac{1}{2}+ \frac{3}{16} ( b^{(k-1)}_1- b^{(k-1)}_0) & 0 & 0 & 0 & 0 & 0\\ \hline
 \frac{2}{3} & \frac{2}{3}+ \frac{1}{8} b^{(k-1)}_1 & 0 & 0 & 0 & 0 & 0\\ \hline
 \frac{1}{3} & \frac{1}{3} & 0 & 0 & 0 & 0 & 0 \\ \hline
 0 & 0 & 0 & 0 & 0 & 0 & 0 \\ \hline
\end{array}\;,
\end{equation*}
\end{small}
\begin{small}
\begin{equation*}
{\f M}^B_k = 
\begin{array}{|c|c|c|c|c|c|c|} 
\hline
 0 & 0 & 0 & 0 & 0 & 0 & 0\\ \hline
 0 & 0 & 0 & 0 & 0 & 0 & 0\\ \hline
 0 & 0 & 0 & 0 & 0 & 0 & 0\\ \hline
 0 & 0 & 0 & 0 & 0 & 0 & 0\\ \hline
 0 & 0 & 0 & 0 & 0 & 0 & 0\\ \hline
 0 & \frac{1}{3} & \frac{2}{3} + \frac{1}{8} b^{(k)}_0& \frac{1}{2}+\frac{3}{16} (b^{(k)}_0- b^{(k)}_1) & -\frac{1}{8} b^{(k)}_1 & 0 & 0\\ \hline
 0 & \frac{1}{3} & \frac{2}{3} & \frac{1}{2} & 0 & 0 & 0 \\ \hline
\end{array}
\end{equation*}
\end{small}
and 
\begin{small}
\begin{equation*}
\f X = 
\begin{array}{|c|c|c|c|c|c|c|} 
\hline
 0 & 0 & 0 & 0 & 0 & 0 & 0 \\ \hline
 0 & 0 & 0 & 0 & 0 & 0 & 0 \\ \hline
 0 & 0 & 0 & 0 & 0 & 0 & 0 \\ \hline
 0 & 0 & 0 & 0 & 0 & 0 & 0 \\ \hline
 \frac{1}{3} & \frac{1}{3} & 0 & 0 & 0 & 0 & 0 \\ \hline
 \frac{2}{3} & \frac{1}{3} & \frac{1}{3} & 0 & 0 & 0 & 0 \\ \hline
 1 & \frac{2}{3} & \frac{1}{3} & 0 & 0 & 0 & 0 \\ \hline
\end{array} \;.
\end{equation*}
\end{small}
The basis functions $\basisfn{0}{k,0}$ are given by 
\begin{equation*}
\f D [\basisfn{0}{k,0}] = R_k({\f M}^L_k + {\f M}^B_k + \f X).
\end{equation*}
Let 
\begin{small}
\begin{equation*}
 \f Y_{k,i} =
  \begin{array}{|c|c|c|c|c|c|c|}
\hline
 0 & 0 & 0 & 0 & 0 & 0 & 0\\ \hline
 0 & 0 & 0 & 0 & 0 & 0 & 0\\ \hline
 0 & 0 & 0 & 0 & 0 & 0 & 0\\ \hline
 0 & -\frac{1}{24}t^{(k-2)}_i & 0 & 0 & 0 & 0 & 0\\ \hline
 0 & -\frac{1}{4}t^{(k-2)}_i -\frac{1}{6}q^{(k)}_i & 0 & 0 & 0 & 0 & 0\\ \hline
 0 & \frac{1}{24}q^{(k)}_i & \frac{1}{4}t^{(k+1)}_i - \frac{1}{6}q^{(k)}_i & \frac{1}{24}t^{(k+1)}_i & 0 & 0 & 0\\ \hline
 0 & 0 & 0 & 0 & 0 & 0 & 0 \\ \hline
\end{array} \;,
\end{equation*}
\end{small}
then the vertex basis functions $\basisfn{0}{k,1}$ and $\basisfn{0}{k,2}$ are given by
\begin{equation*}
\begin{array}{ll}
\f D [\basisfn{0}{k,i}] =& \frac{3}{8} R_k\left(-t^{(k-1)}_i {\f M}^L_k + t^{(k)}_i {\f M}^B_k + \f Y_{k,i}\right)\\
&-\frac{1}{4}n^{(k)}_i \f D[\basisfn{1}{k}]-\frac{1}{4}n^{(k-1)}_i \f D[\basisfn{1}{k-1}],
\end{array}
\end{equation*}
for $i=1,2$. Let 
\begin{small}
\begin{equation*}
\begin{array}{l}
 \f Z_{k,(i,j)} = \\
 \begin{array}{|c|c|c|c|c|c|}
\hline
 0 & 0 & 0 & 0 & 0 & \ldots \\ \hline
 0 & 0 & 0 & 0 & 0 &  \\ \hline
 0 & 0 & 0 & 0 & 0 &  \\ \hline
 0 & \frac{1}{32}Q^{(k-1)}_{i,j} & 0 & 0 & 0 &  \\ \hline
 -\frac{1}{6}T^{(k-1)}_{i,j} & -\frac{1}{6}T^{(k-1)}_{i,j}  & 0 & 0 & 0 &  \\ 
  &  - \frac{1}{8}Q^{(k)}_{i,j} + \frac{1}{16}Q^{(k-1)}_{i,j} &  &  &  &  \\ \hline
 -\frac{1}{3} T^{(k-1)}_{i,j} & -\frac{1}{3} T^{(k-1)}_{i,j} & 
  - \frac{1}{8}Q^{(k)}_{i,j} + \frac{1}{16}Q^{(k+1)}_{i,j} & \frac{1}{32}Q^{(k+1)}_{i,j} & 0 &  \\ 
  & -\frac{1}{3} T^{(k)}_{i,j} - \frac{3}{16}Q^{(k)}_{i,j} & 
 -\frac{1}{6}T^{(k)}_{i,j} &  &  &  \\ \hline
 0 & -\frac{1}{3} T^{(k)}_{i,j} & -\frac{1}{6} T^{(k)}_{i,j} & 0 & 0 & \ldots  \\ \hline
\end{array} \;,
\end{array}
\end{equation*}
\end{small}
then we have 
\begin{equation*}
\begin{array}{ll}
 \f D [\basisfn{0}{k,i+j+1}] = & \frac{\lambda}{24} R_k\left(T^{(k-1)}_{i,j} {\f M}^L_k + T^{(k)}_{i,j} {\f M}^B_k + \f Z_{k,(i,j)} \right) \\
 &- \frac{\lambda}{48}N^{(k)}_{i,j}\f D[\basisfn{1}{k}]+\frac{\lambda}{48}N^{(k-1)}_{i,j}\f D[\basisfn{1}{k-1}]
\end{array}
\end{equation*}
for ${i,j}\in\{1,2\}$, where $\lambda = 2-\delta_i^j$. We have $\basisfn{0}{k,j} \in \ker(  \Dual_{1,\Q} \cup \mathrm{M}^4_{2,\Q})$ and $\{\basisfn{0}{k,j}\}_{k=1,\ldots,4,j=0,\ldots,5}$ being dual to $\Dual_{0,\Q}$.

\subsection{Vertex basis functions, $p=3$}

We define
\begin{small}
\begin{equation*}
\begin{array}{l}
{\f M}^L_k = \\
\begin{array}{|c|c|c|c|c|c|c|c|} 
\hline
 0 & 0 & 0 & 0 & 0 & 0 & 0 & 0\\ \hline
 0 & 0 & 0 & 0 & 0 & 0 & 0 & 0\\ \hline
 0 & -\frac{1}{18} b^{(k-1)}_0 & 0 & 0 & 0 & 0 & 0 & 0\\ \hline
 \frac{1}{3} & \frac{1}{3}+ \frac{1}{9} b^{(k-1)}_1- \frac{1}{6} b^{(k-1)}_0 & 0 & 0 & 0 & 0 & 0 & 0\\ \hline
 \frac{2}{3} & \frac{2}{3}+ \frac{1}{6} b^{(k-1)}_1- \frac{1}{9} b^{(k-1)}_0 & 0 & 0 & 0 & 0 & 0 & 0\\ \hline
 \frac{2}{3} & \frac{2}{3}+ \frac{1}{18} b^{(k-1)}_1 & 0 & 0 & 0 & 0 & 0 & 0\\ \hline
 \frac{1}{3} & \frac{1}{3} & 0 & 0 & 0 & 0 & 0 & 0 \\ \hline
 0 & 0 & 0 & 0 & 0 & 0 & 0 & 0 \\ \hline
\end{array}\;,
\end{array}
\end{equation*}
\end{small}
\begin{small}
\begin{equation*}
\begin{array}{l}
{\f M}^B_k = \\
\begin{array}{|c|c|c|c|c|c|c|c|} 
\hline
 0 & 0 & 0 & 0 & 0 & 0 & 0 & 0\\ \hline
 0 & 0 & 0 & 0 & 0 & 0 & 0 & 0\\ \hline
 0 & 0 & 0 & 0 & 0 & 0 & 0 & 0\\ \hline
 0 & 0 & 0 & 0 & 0 & 0 & 0 & 0\\ \hline
 0 & 0 & 0 & 0 & 0 & 0 & 0 & 0\\ \hline
 0 & 0 & 0 & 0 & 0 & 0 & 0 & 0\\ \hline
 0 & \frac{1}{3} & \frac{2}{3} + \frac{1}{18} b^{(k)}_0 & \frac{2}{3}+\frac{1}{6} b^{(k)}_0 -\frac{1}{9} b^{(k)}_1 & \frac{1}{3}+ \frac{1}{9} b^{(k)}_0- \frac{1}{6} b^{(k)}_1 & -\frac{1}{18} b^{(k)}_1 & 0 & 0\\ \hline
 0 & \frac{1}{3} & \frac{2}{3} & \frac{2}{3} & \frac{1}{3} & 0 & 0 & 0 \\ \hline
\end{array}
\end{array}
\end{equation*}
\end{small}
and 
\begin{small}
\begin{equation*}
\f X = 
\begin{array}{|c|c|c|c|c|c|c|c|} 
\hline
 0 & 0 & 0 & 0 & 0 & 0 & 0 & 0 \\ \hline
 0 & 0 & 0 & 0 & 0 & 0 & 0 & 0 \\ \hline
 0 & 0 & 0 & 0 & 0 & 0 & 0 & 0 \\ \hline
 0 & 0 & 0 & 0 & 0 & 0 & 0 & 0 \\ \hline
 0 & 0 & 0 & 0 & 0 & 0 & 0 & 0 \\ \hline
 \frac{1}{3} & \frac{1}{3} & 0 & 0 & 0 & 0 & 0 & 0 \\ \hline
 \frac{2}{3} & \frac{1}{3} & \frac{1}{3} & 0 & 0 & 0 & 0 & 0 \\ \hline
 1 & \frac{2}{3} & \frac{1}{3} & 0 & 0 & 0 & 0 & 0 \\ \hline
\end{array} \;.
\end{equation*}
\end{small}
The basis functions $\basisfn{0}{k,0}$ are given by 
\begin{equation*}
\f D [\basisfn{0}{k,0}] = R_k({\f M}^L_k + {\f M}^B_k + \f X).
\end{equation*}
Let 
\begin{small}
\begin{equation*}
\begin{array}{l}
 \f Y_{k,i} =\\
   \begin{array}{|c|c|c|c|c|c|c|c|}
\hline
 0 & 0 & 0 & 0 & 0 & 0 & 0 & 0\\ \hline
 0 & 0 & 0 & 0 & 0 & 0 & 0 & 0\\ \hline
 0 & 0 & 0 & 0 & 0 & 0 & 0 & 0\\ \hline
 0 & 0 & 0 & 0 & 0 & 0 & 0 & 0\\ \hline
 0 & -\frac{1}{18}t^{(k-2)}_i - \frac{1}{54}q^{(k)}_i & 0 & 0 & 0 & 0 & 0 & 0\\ \hline
 0 & -\frac{5}{18}t^{(k-2)}_i - \frac{11}{54}q^{(k)}_i & 0 & 0 & 0 & 0 & 0 & 0\\ \hline
 0 & \frac{1}{27}q^{(k)}_i & \frac{5}{18}t^{(k+1)}_i - \frac{11}{54}q^{(k)}_i & \frac{1}{18}t^{(k+1)}_i  -\frac{1}{54}q^{(k)}_i & 0 & 0 & 0 & 0\\ \hline
 0 & 0 & 0 & 0 & 0 & 0 & 0 & 0 \\ \hline
\end{array}  
\end{array}
\end{equation*}
\end{small}
then the vertex basis functions $\basisfn{0}{k,1}$ and $\basisfn{0}{k,2}$ are given by
\begin{equation*}
\begin{array}{ll}
\f D [\basisfn{0}{k,i}] =& \frac{1}{3} R_k\left(-t^{(k-1)}_i {\f M}^L_k + t^{(k)}_i {\f M}^B_k + \f Y_{k,i}\right)\\
&-\frac{1}{8}n^{(k)}_i \f D[\basisfn{1}{k}]-\frac{1}{8}n^{(k-1)}_i \f D[\basisfn{1}{k-1}],
\end{array}
\end{equation*}
for $i=1,2$. Let 
\begin{small}
\begin{equation*}
\begin{array}{l}
 \f Z_{k,(i,j)} =\\
 \begin{array}{|c|c|c|c|c|c|}
\hline
 0 & 0 & 0 & 0 & 0 & \ldots \\ \hline
 0 & 0 & 0 & 0 & 0 &  \\ \hline
 0 & 0 & 0 & 0 & 0 &  \\ \hline
 0 & 0 & 0 & 0 & 0 &  \\ \hline
 0 & - \frac{1}{72}Q^{(k)}_{i,j} + \frac{1}{36}Q^{(k-1)}_{i,j} & 0 & 0 & 0 &  \\ \hline
 -\frac{1}{6}T^{(k-1)}_{i,j} & - \frac{1}{6} T^{(k-1)}_{i,j} & 0 & 0 & 0 &  \\ 
  &  - \frac{11}{72}Q^{(k)}_{i,j} + \frac{1}{18}Q^{(k-1)}_{i,j} & & & &  \\ \hline
 -\frac{1}{3}T^{(k-1)}_{i,j} & -\frac{1}{3}T^{(k-1)}_{i,j} & 
 - \frac{11}{72}Q^{(k)}_{i,j} + \frac{1}{18}Q^{(k+1)}_{i,j} & \frac{1}{36}Q^{(k+1)}_{i,j} & 0 &  \\ 
  & -\frac{1}{3}T^{(k)}_{i,j}  - \frac{1}{6}Q^{(k)}_{i,j} & 
 -\frac{1}{6} T^{(k)}_{i,j}  & -\frac{1}{72}Q^{(k)}_{i,j} &  &  \\ \hline
 0 & -\frac{1}{3}T^{(k)}_{i,j} & -\frac{1}{6}T^{(k)}_{i,j} & 0 & 0 & \ldots \\ \hline
\end{array} \;,
\end{array}
\end{equation*}
\end{small}
then we have 
\begin{equation*}
\begin{array}{ll}
 \f D [\basisfn{0}{k,i+j+1}] =&\frac{\lambda}{27} R_k\left(T^{(k-1)}_{i,j} {\f M}^L_k + T^{(k)}_{i,j} {\f M}^B_k + \f Z_{k,(i,j)} \right) \\
 &- \frac{\lambda}{96}N^{(k)}_{i,j}\f D[\basisfn{1}{k}]+\frac{\lambda}{96}N^{(k-1)}_{i,j}\f D[\basisfn{1}{k-1}]
\end{array}
\end{equation*}
for ${i,j}\in\{1,2\}$, where $\lambda = 2-\delta_i^j$.  We have $\basisfn{0}{k,j} \in \ker( \Dual_{1,\Q} \cup \mathrm{M}^3_{2,\Q})$ and $\{\basisfn{0}{k,j}\}_{k=1,\ldots,4,j=0,\ldots,5}$ being dual to $\Dual_{0,\Q}$.

As for $p=5$, all representations of basis functions for $p\in\{3,4\}$ can be verified using simple symbolic computations.

\section{Extension to isoparametric/isogeometric elements} \label{sec:extension}

As pointed out before, the BS quadrilaterals and related spline macro-elements are not affine invariant. Hence their definition depends on the underlying geometry. It is possible to extend the construction from bilinearly mapped quadrilaterals $\Q$, with $\param_{\Q} \in(\mathbb{P}^{(1,1)})^2$, to fully isoparametric elements, with $\param_{\Q} \in(\Space_\Q^p)^2$. This naturally leads to the $C^1$ multi-patch isogeometric space proposed in \cite{KaSaTa19b}, which is based on the previous works \cite{KaViJu15,CoSaTa16,KaSaTa17,KaSaTa17b}. The isoparametric/isogeometric extension, however, needs some additional care, in order to guarantee optimal approximation properties. First, the definition of the space $\Space_\Q^p$ and the associated degrees of freedom need to be generalized, mainly replacing the normal derivative $(\dn \func \circ \param_\Q)|_{\hat{\edge}}$  with a suitable directional derivative $(\boldsymbol{d} \cdot \nabla  \func \circ \param_\Q)|_{\hat{\edge}}$ (in the bilinear case, $\boldsymbol{d}$ is a constant normal vector which is then rescaled to the unitary normal $\boldsymbol{n}$). Secondly, and most importantly, the element  parametrizations need to fulfill a condition (named analysis-suitable $G^1$ in the papers mentioned above) that holds for all  bilinear parametrizations but requires a suitable refitting for higher degree parametrizations, see~\cite{KaSaTa17b}.

Modifications of elements near curved boundaries were also discussed and 
resolved successfully in~\cite{BeMa14} for a $C^1$ space of degree $4$ and $5$ over a quadrilateral mesh.  There, the authors presented the construction of a minimal determining set (similar to a dual basis), without giving explicit degrees of freedom or a basis representation.

A complete analysis of the convergence in case of local modifications near the boundary is not known and beyond the scope of the current paper. It is important to note, that a suitable splitting of elements can increase the flexibility of the resulting space, such as in~\cite{HaBo00}, where using a regular $4$-split on degree $p=5$ triangular elements allows for the construction of surfaces of arbitrary topology.

\section{Numerical examples} \label{sec:examples}

The goal is to demonstrate the potential of using the proposed $C^1$ spaces over quadrilateral meshes~$\Mesh$ for solving fourth order PDEs over domains~$\Omega$ with piecewise linear boundary. This is done on the basis of a particular example, namely for the biharmonic equation 
\begin{equation} \label{eq:problem_biharmonic}
\left\{
 \begin{array}{rll} 
 \triangle^{2} u(\f{x}) & = g(\f{x}) & \f{x} \in \Omega \\
  u(\f{x}) & = g_1(\f{x})  & \f{x} \in \partial \Omega \\
  \frac{\partial u}{\partial \f{n}}(\f{x})  & = g_2(\f{x}) &  \f{x} \in \partial \Omega .
        \end{array} \right. 
\end{equation}
More precisely, we solve problem~\eqref{eq:problem_biharmonic} via a standard Galerkin discretization by employing the family of  $C^1$ quadrilateral 
spaces~$\globalspace^p(\Mesh_h)$, where the mesh size~$h$ denotes the length of the longest edge in $\Mesh_h$, with $h=h_0\frac{1}{2^L}$, $L=0,1,\ldots,5$. Here $L$ denotes the level of refinement, $h_0$ is the mesh size of the initial mesh~$\Mesh$, and $\Mesh_h=(\Quad_h,\Edge_h,\Vertices_h)$ is the resulting 
refined quadrilateral mesh obtained from~$\Mesh$ with corresponding sets of quadrilaterals~$\Quad_h$, edges~$\Edge_h$ and vertices~$\Vertices_h$. 
Note that in the refinement process, each quadrilateral of the current mesh is split regularly into four sub-quadrilaterals. Moreover, in all examples below, 
the functions~$g$, $g_1$ and $g_2$ from problem~\eqref{eq:problem_biharmonic} are computed from an exact solution~$u$, and the resulting Dirichlet boundary 
data~$g_1$ and $g_2$ are $L^2$ projected and strongly imposed to the numerical solution~$u_h \in \globalspace^p(\Mesh_h)$. 

\begin{example} \label{ex:example_multiholedomain}
For the two meshes~$\Mesh$ in Fig.~\ref{fig:example_multidomain} and Fig.~\ref{fig:example_holedomain}, which are visualized in the top left of each figure, 
we solve the biharmonic 
equation~\eqref{eq:problem_biharmonic} over the corresponding bilinear multi-patch domains by using the BS  quadrilateral and macro-element spaces~$\globalspace^p(\Mesh_h)$ for polynomial degrees~$p=3,4,5$. For both cases, the considered exact solution is given by 
\begin{equation} \label{eq:exact_multiholedomain}
 u(x_1,x_2)= - 4 \cos \left(\frac{x_1}{2}\right) \sin \left(\frac{x_2}{2}\right),
\end{equation}
and is shown in Fig.~\ref{fig:example_multidomain} (top row, right) and Fig.~\ref{fig:example_holedomain} (top row, right), respectively. The resulting $L^{\infty}$-error as well 
as the relative $L^2$, $H^1$ and $H^2$-errors with respect to the number of degrees of freedom (NDOF) are shown in the middle and bottom rows of Fig.~\ref{fig:example_multidomain} and 
Fig.~\ref{fig:example_holedomain}, and decrease for both examples with optimal order of $\mathcal{O}(h^{p+1})$, $\mathcal{O}(h^{p+1})$, $\mathcal{O}(h^p)$ and $\mathcal{O}(h^{p-1})$, respectively.

\begin{figure}
\centering\footnotesize
\begin{tabular}{cc}
\includegraphics[width=.35\textwidth,clip]{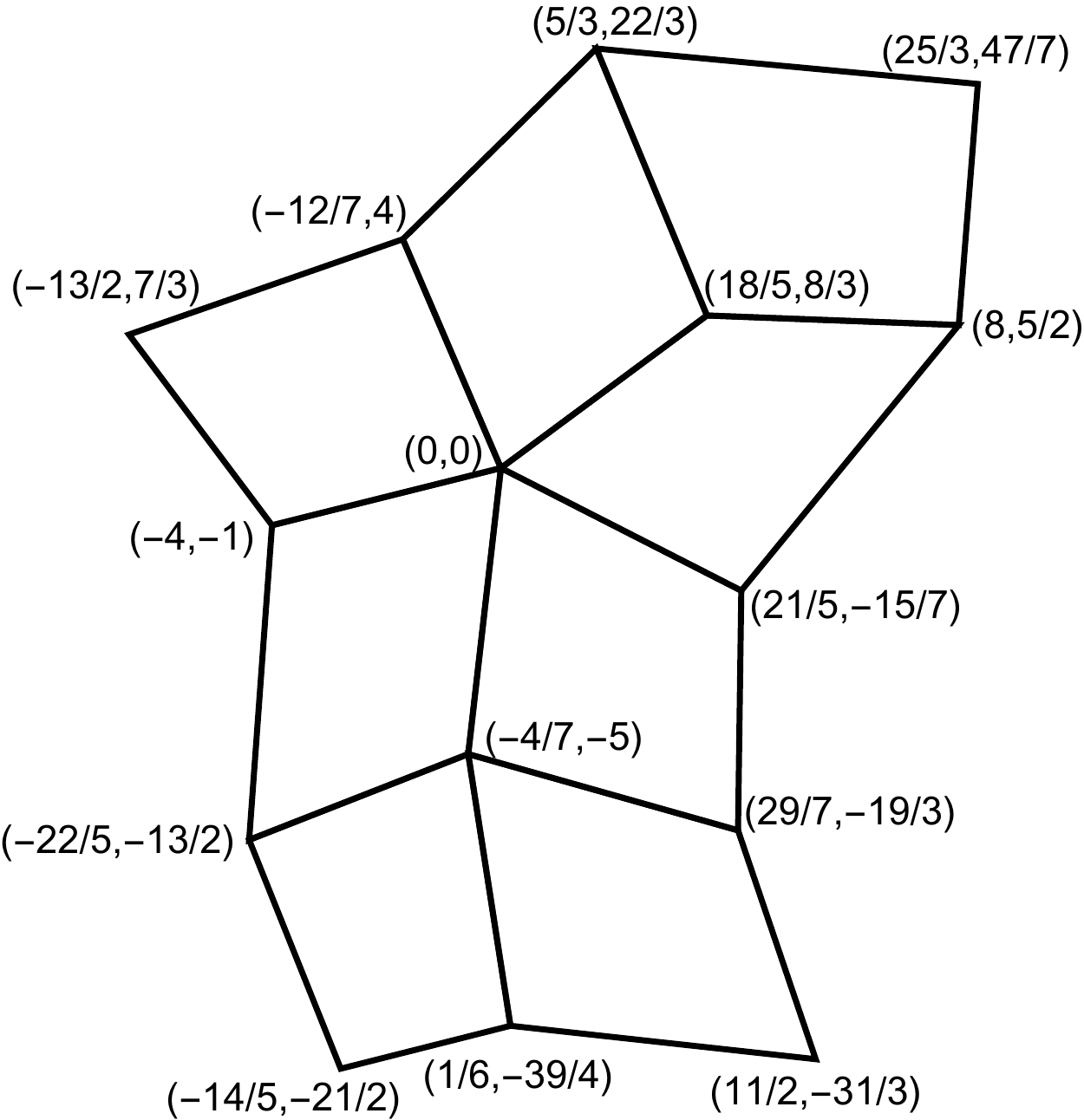} &
\includegraphics[width=.35\textwidth,clip]{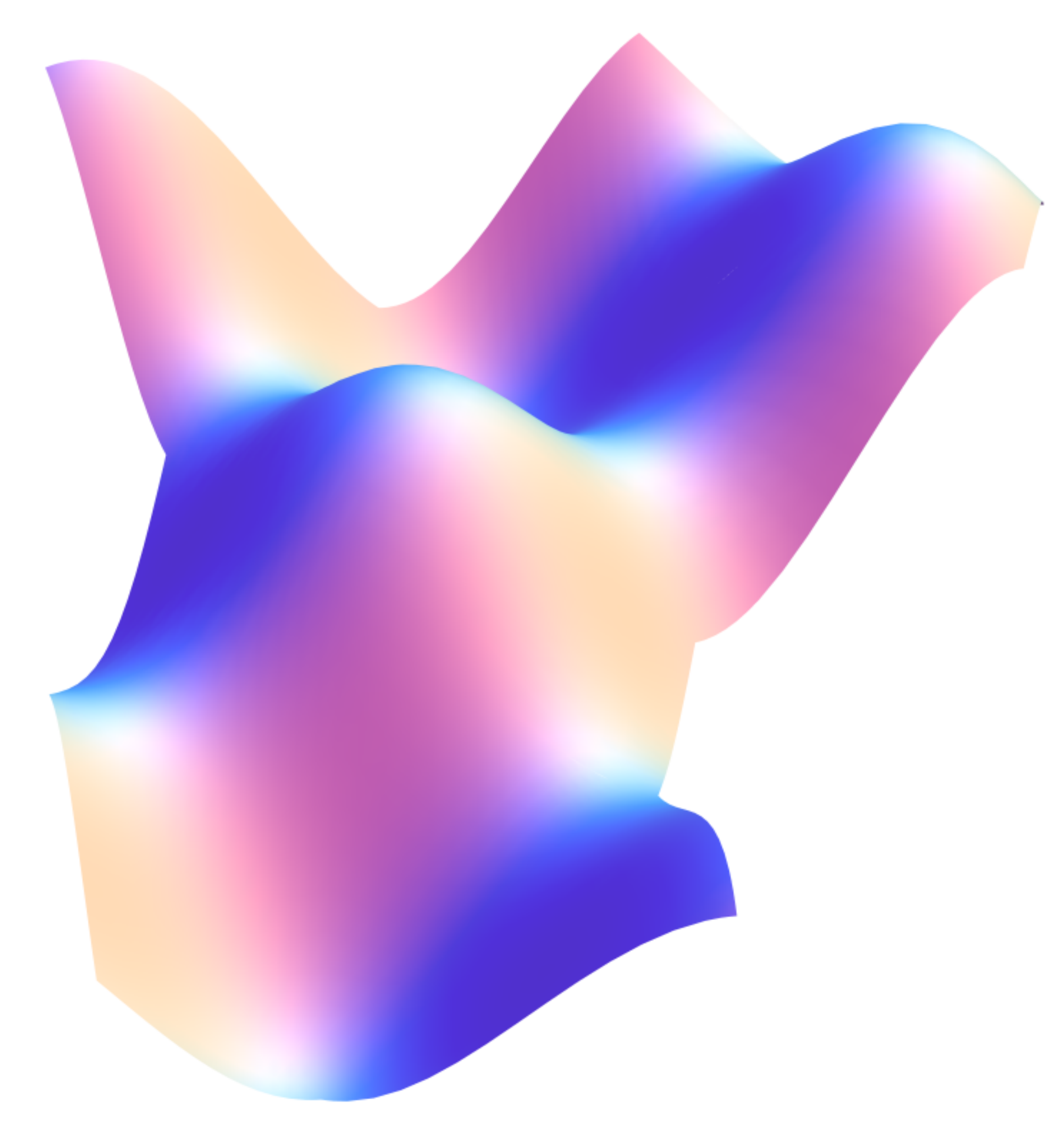} \\
Quadrilateral mesh~$\Mesh$ & Exact solution \\
\includegraphics[width=.45\textwidth,clip]{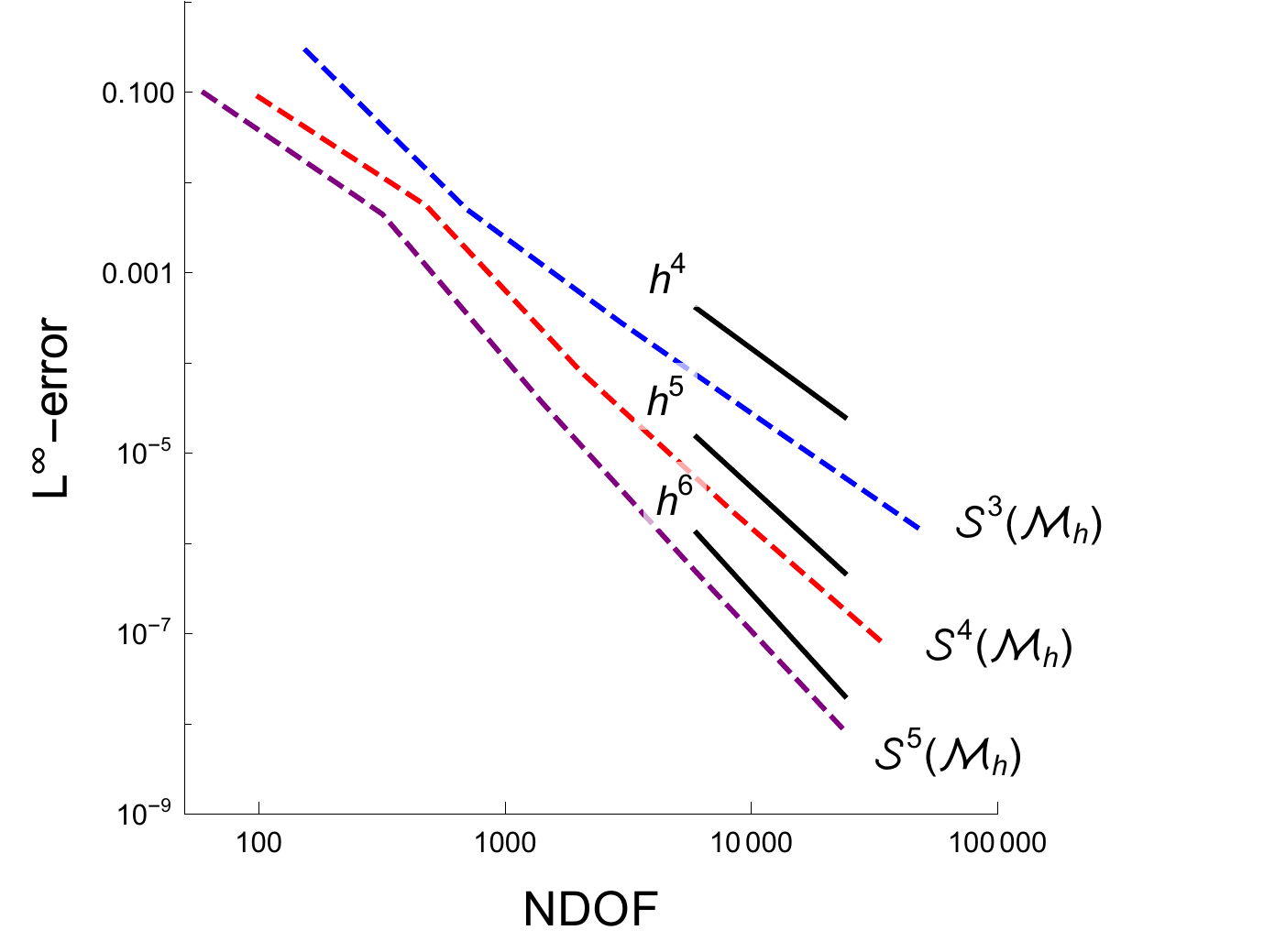} &
\includegraphics[width=.45\textwidth,clip]{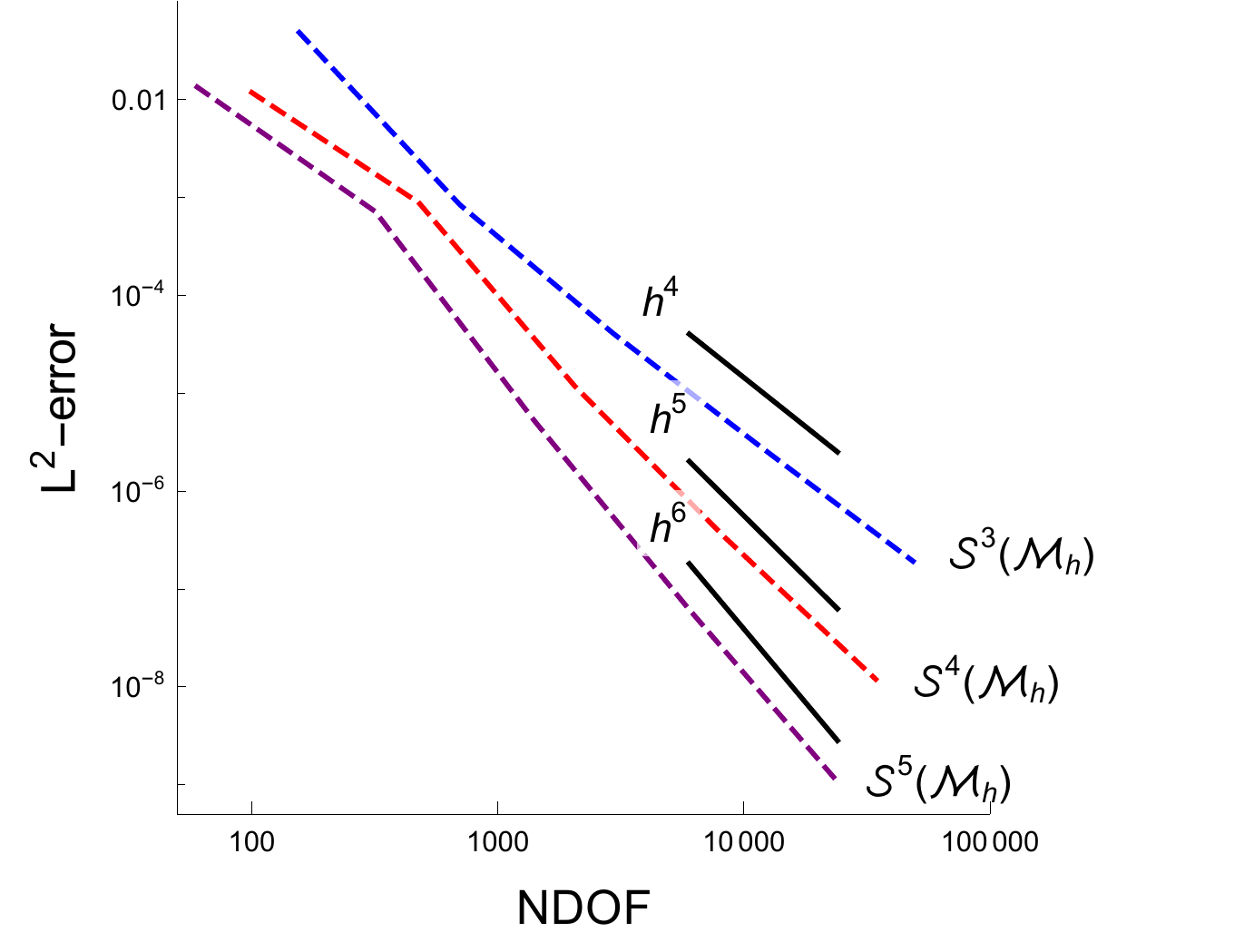} \\
$L^{\infty}$ error & Rel. $L^2$ error \\
\includegraphics[width=.45\textwidth,clip]{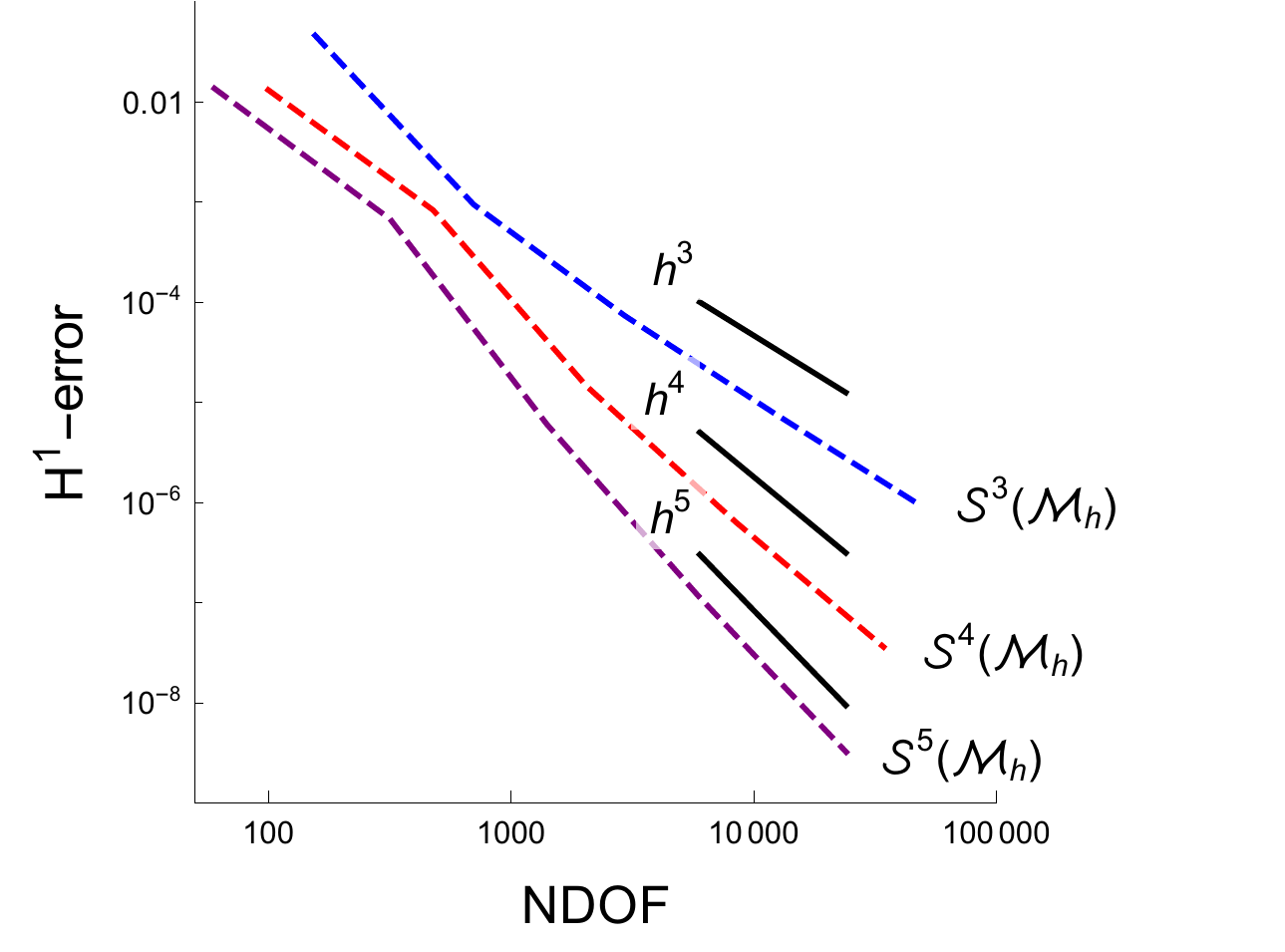} &
\includegraphics[width=.45\textwidth,clip]{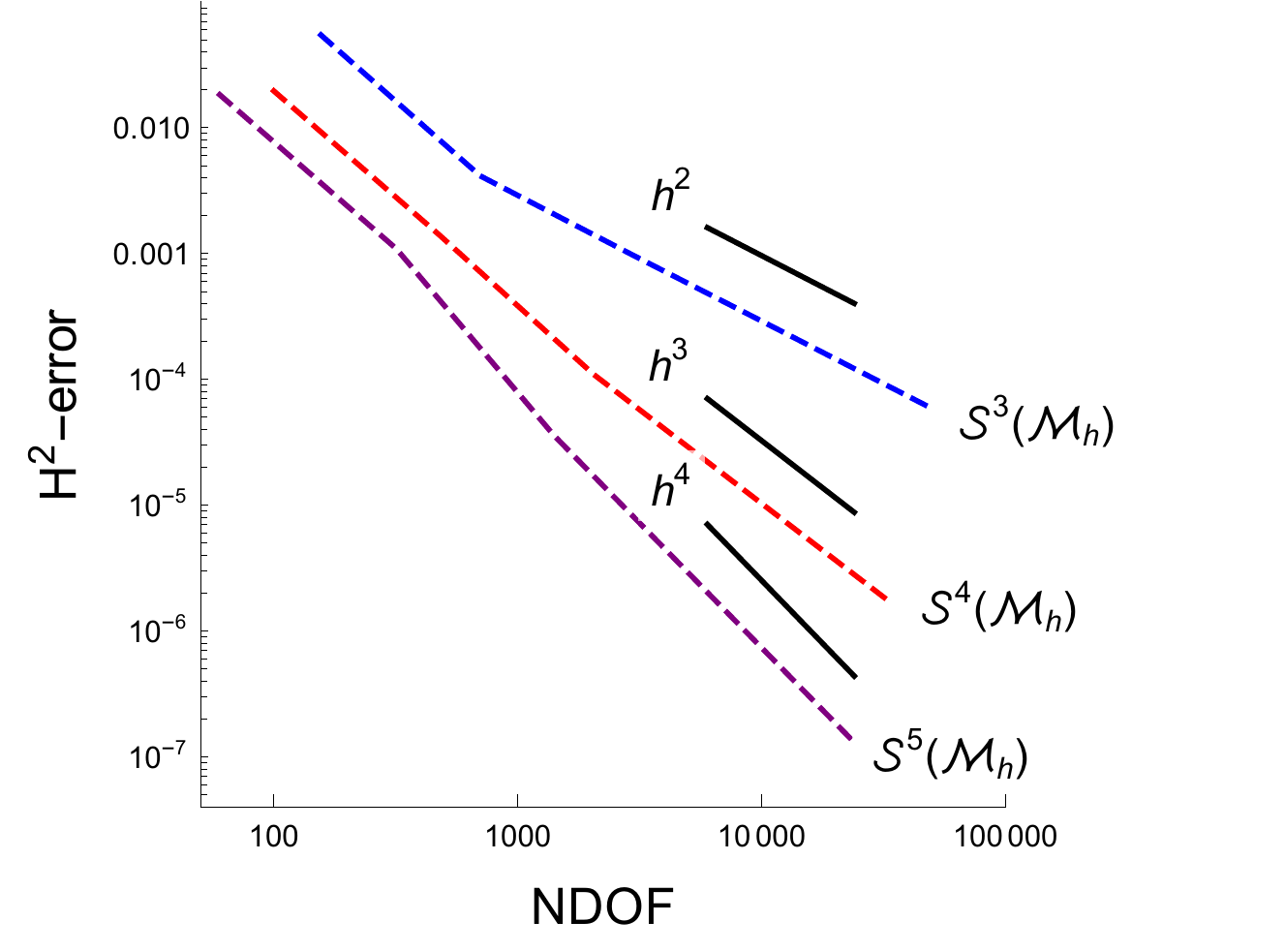} \\
Rel. $H^1$ error & Rel. $H^2$ error
\end{tabular}
\caption{Solving the biharmonic equation~\eqref{eq:problem_biharmonic} on the given quadrilateral mesh~$\Mesh$ (top row, left) for the exact 
solution~\eqref{eq:exact_multiholedomain} (top row, right) with the resulting $L^{\infty}$ and relative $L^{2}$, $H^1$, $H^2$-errors (middle and bottom row). 
See Example~\ref{ex:example_multiholedomain}.}
\label{fig:example_multidomain}
\end{figure}

\begin{figure}
\centering\footnotesize
\begin{tabular}{cc}
\includegraphics[width=.45\textwidth,clip]{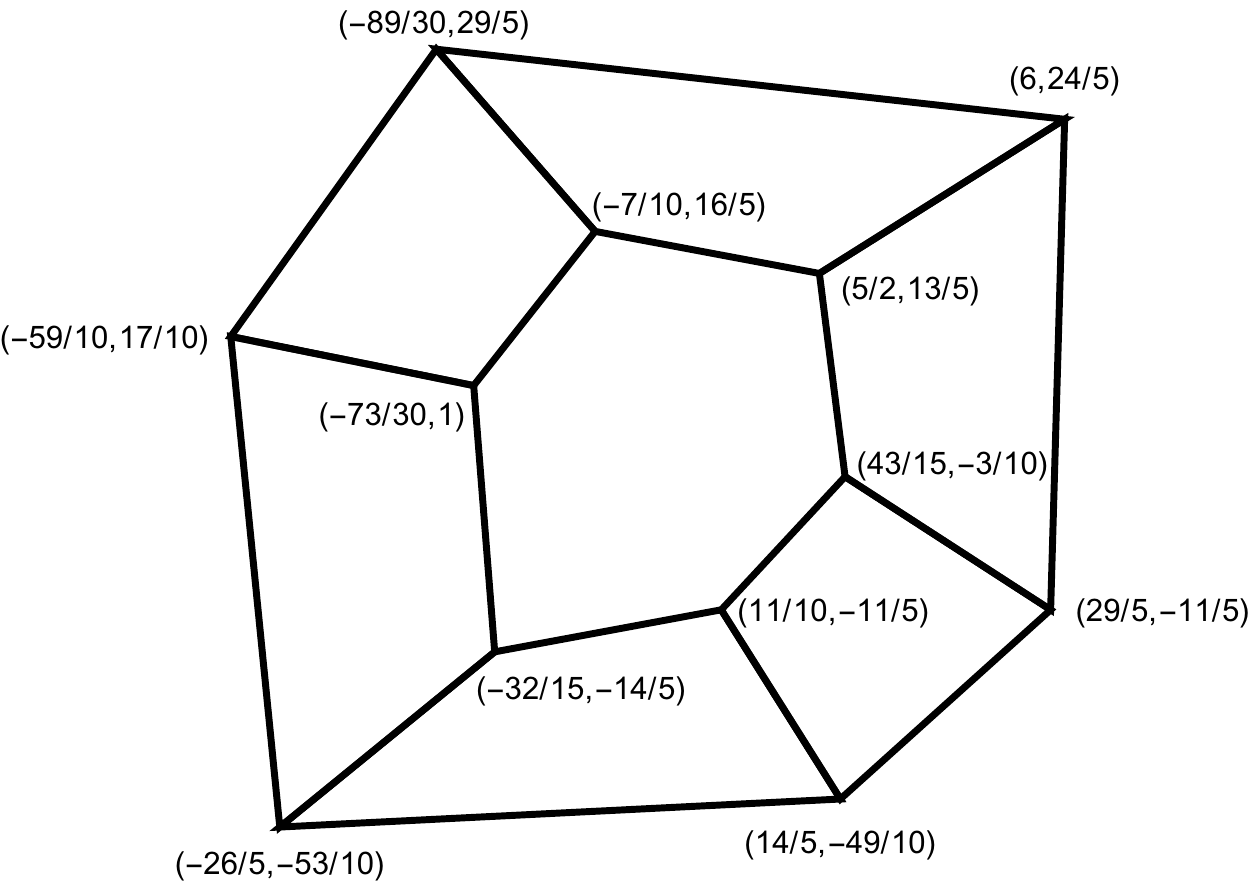} &
\includegraphics[width=.35\textwidth,clip]{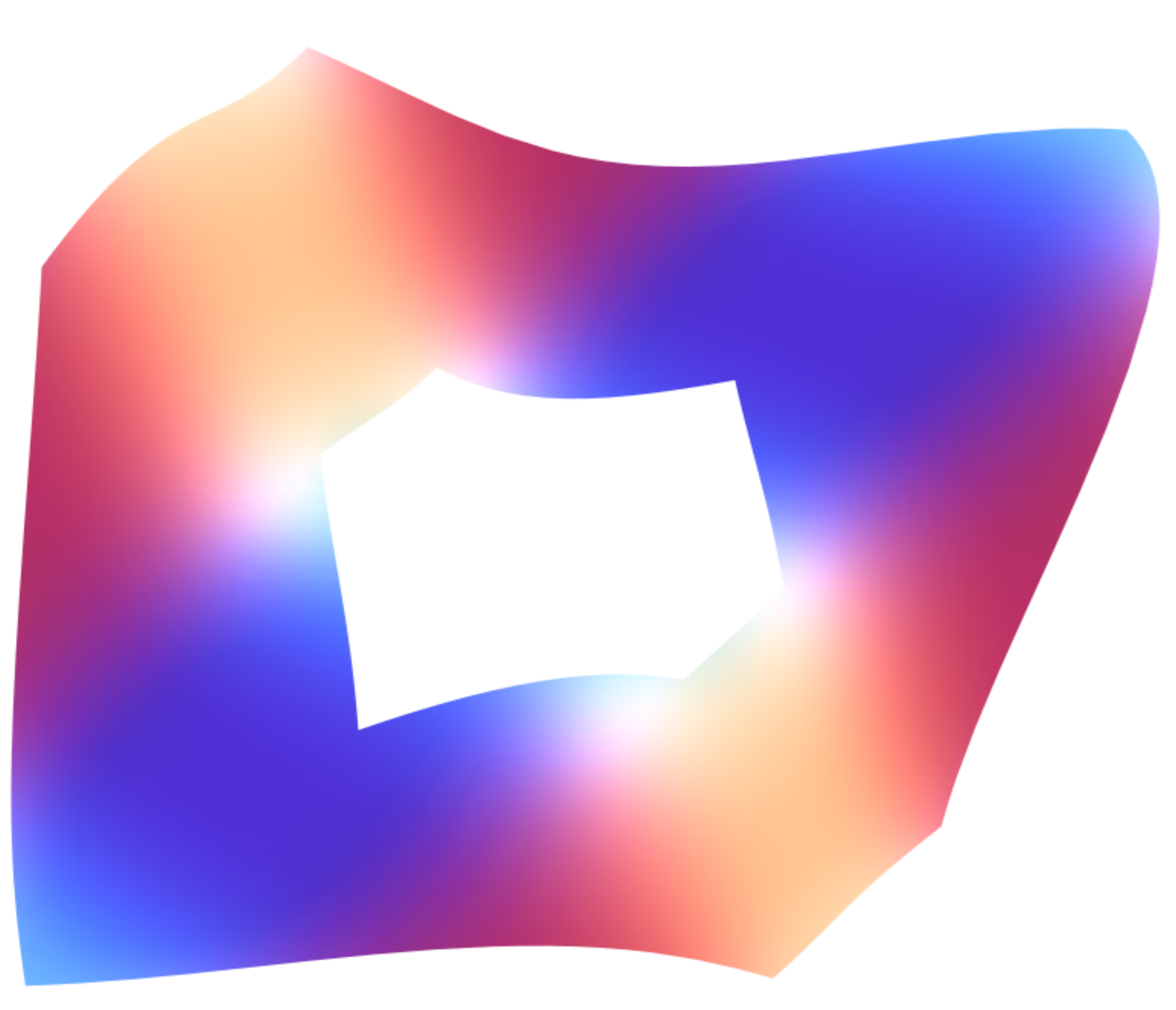} \\
Quadrilateral mesh~$\Mesh$ & Exact solution \\
\includegraphics[width=.45\textwidth,clip]{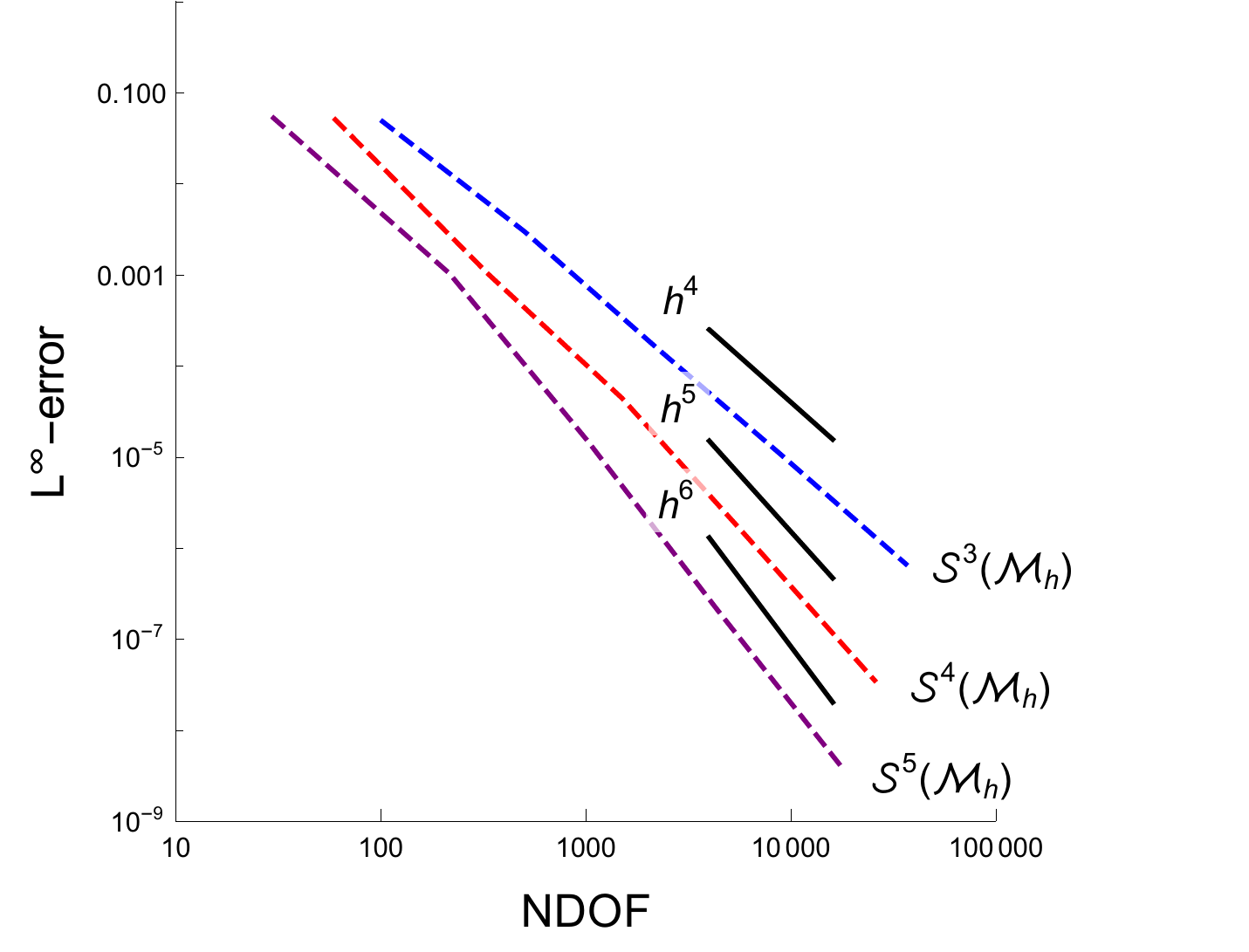} &
\includegraphics[width=.45\textwidth,clip]{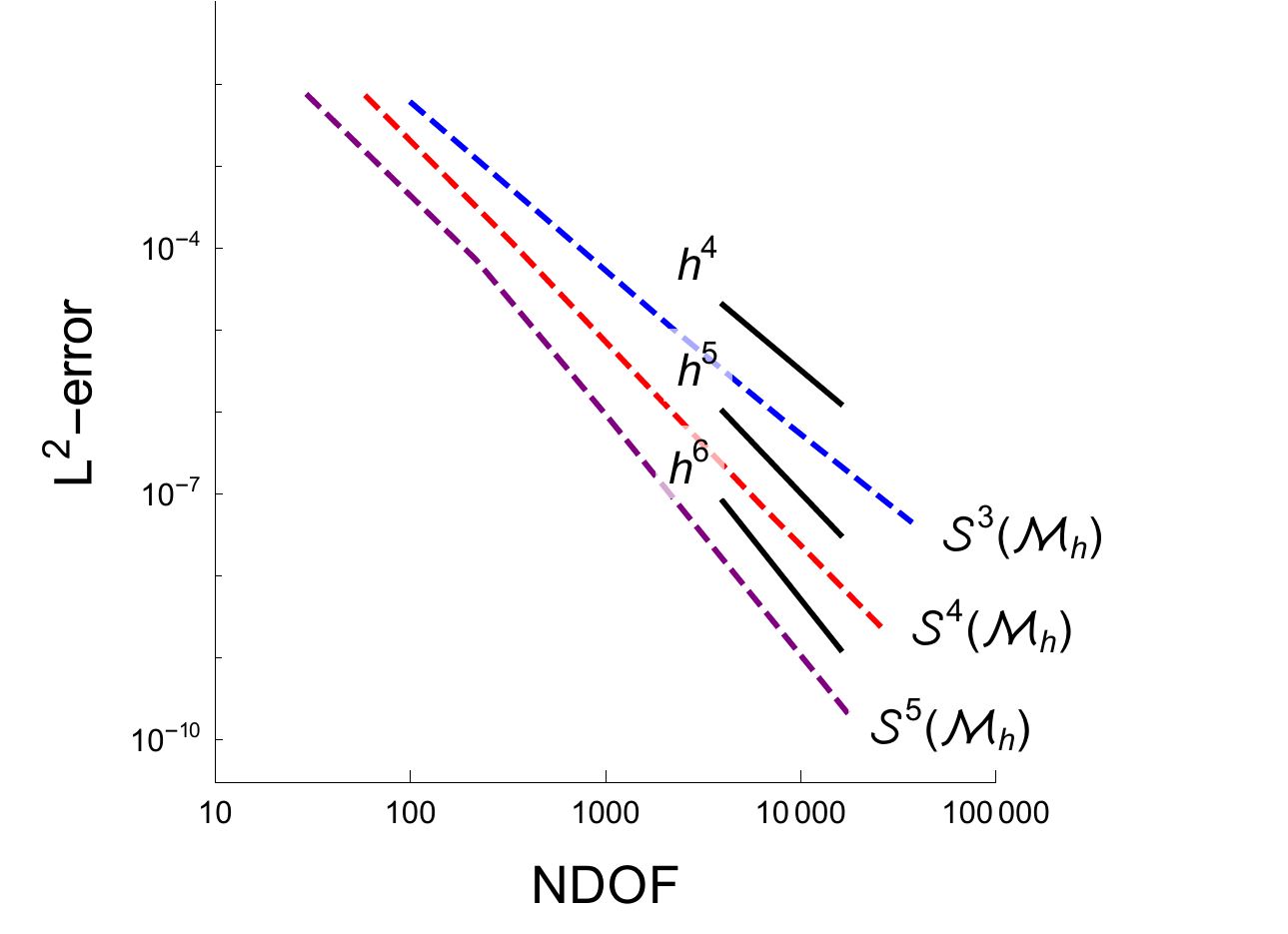} \\
$L^{\infty}$ error & Rel. $L^2$ error \\
\includegraphics[width=.45\textwidth,clip]{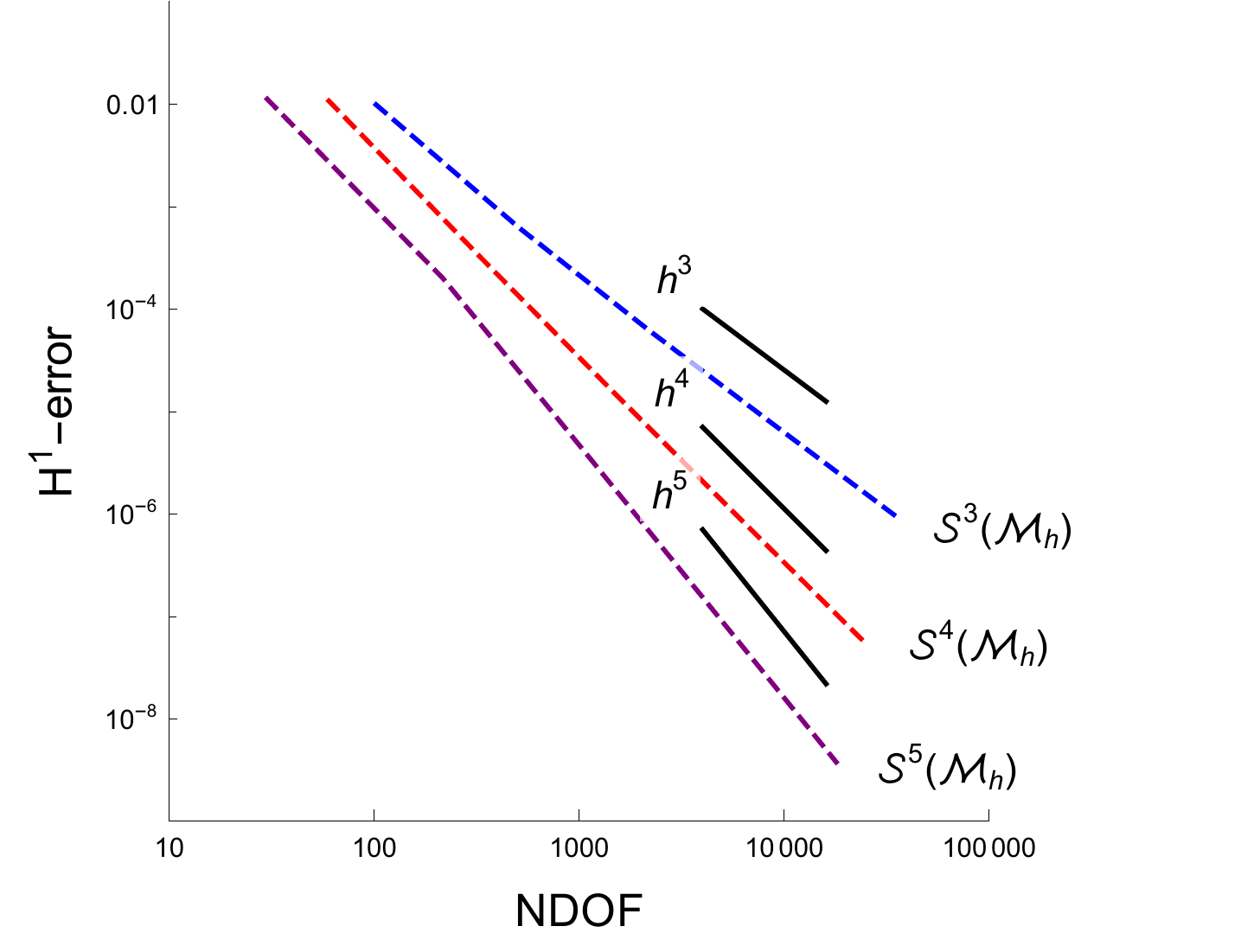} &
\includegraphics[width=.45\textwidth,clip]{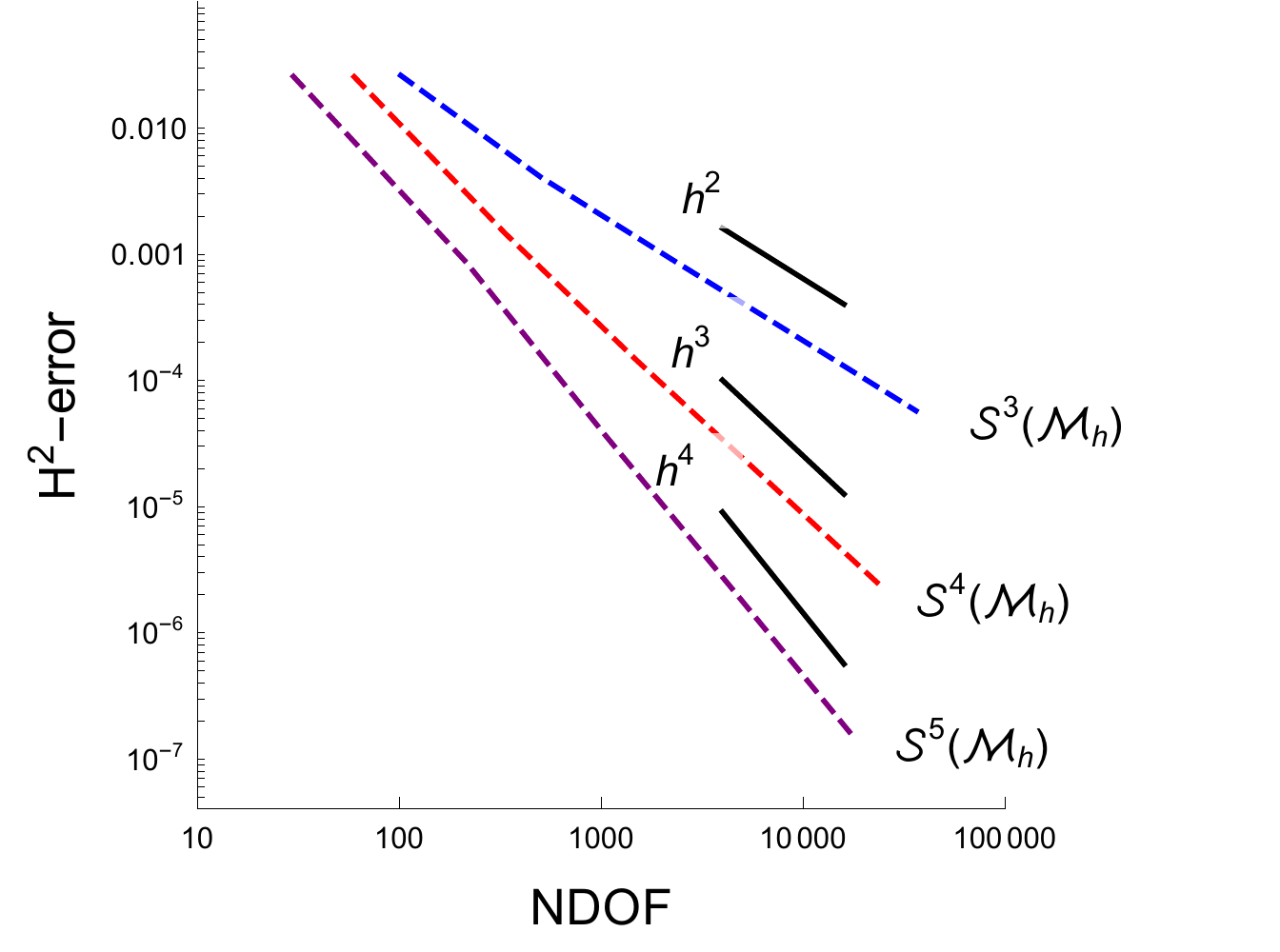} \\
Rel. $H^1$ error & Rel. $H^2$ error
\end{tabular}
\caption{Solving the biharmonic equation~\eqref{eq:problem_biharmonic} on the given quadrilateral mesh~$\Mesh$ (top row, left) for the exact 
solution~\eqref{eq:exact_multiholedomain} (top row, right) with the resulting $L^{\infty}$ and relative $L^{2}$, $H^1$, $H^2$-errors (middle and bottom row). 
See Example~\ref{ex:example_multiholedomain}.}
\label{fig:example_holedomain}
\end{figure}
\end{example}

\begin{example} \label{ex:example_comparison}
We compare the $C^1$ quadrilateral spaces~$\globalspace^p(\Mesh_h)$ for polynomial degrees~$p=3,4,5$ as constructed in this paper with 
the $C^1$ isogeometric spaces~$\mathcal{A}_h$ for the cases $(p,r)=(3,1)$, $(p,r)=(4,2)$ and $(p,r)=(5,3)$ as generated in~\cite{KaSaTa19} by means of standard $h$-refinement. 
For this purpose, we solve the biharmonic equation~\eqref{eq:problem_biharmonic} for the exact solution
\begin{equation} \label{eq:exact_comparison}
 u(x_1,x_2)= - 4 \cos \left(\frac{x_1}{2}\right) \sin \left(\frac{x_2}{2}\right),
\end{equation}
see Fig.~\ref{fig:example_comparison} (top row, right), on the bilinearly parametrized multi-patch domain~$\Omega$ determined by the 
mesh~$\Mesh$ shown in Fig.~\ref{fig:example_comparison} (top row, left). The resulting $L^{\infty}$-error as well as the relative $L^2$, $H^1$ and $H^2$-errors, which are reported in 
Fig.~\ref{fig:example_comparison} (middle and bottom row) with respect to the number of degrees of freedom (NDOF), indicate for all considered degrees $p=3,4,5$ and for both spaces 
$\globalspace^p(\Mesh_h)$ and $\mathcal{A}_h$ convergence rates of optimal order of $\mathcal{O}(h^{p+1})$, $\mathcal{O}(h^{p+1})$, $\mathcal{O}(h^{p})$ and $\mathcal{O}(h^{p-1})$, 
respectively. While the spaces $\globalspace^p(\Mesh_h)$ perform slightly better than the spaces~$\mathcal{A}_h$ for the case~$p=3$, it is in the opposite way around for the 
case~$p=5$. This is not really surprising, since for the case $p=3$, the resulting spaces $\globalspace^p(\Mesh_h)$ are $C^2$ at all vertices $\vertex \in \Vertices_h$, while the 
spaces~$\mathcal{A}_h$ are in general just $C^1$ at the vertices $\vertex \in \Vertices_h \setminus \Vertices$, and since for the case~$p=5$, e.g., the spaces~$\mathcal{A}_h$ are 
$C^3$ at all edges~$\edge \in \Edge_h$, while the spaces~$\globalspace^p(\Mesh_h)$ are in general just $C^1$ there.

 \begin{figure}
\centering\footnotesize
\begin{tabular}{cc}
\includegraphics[width=.4\textwidth,clip]{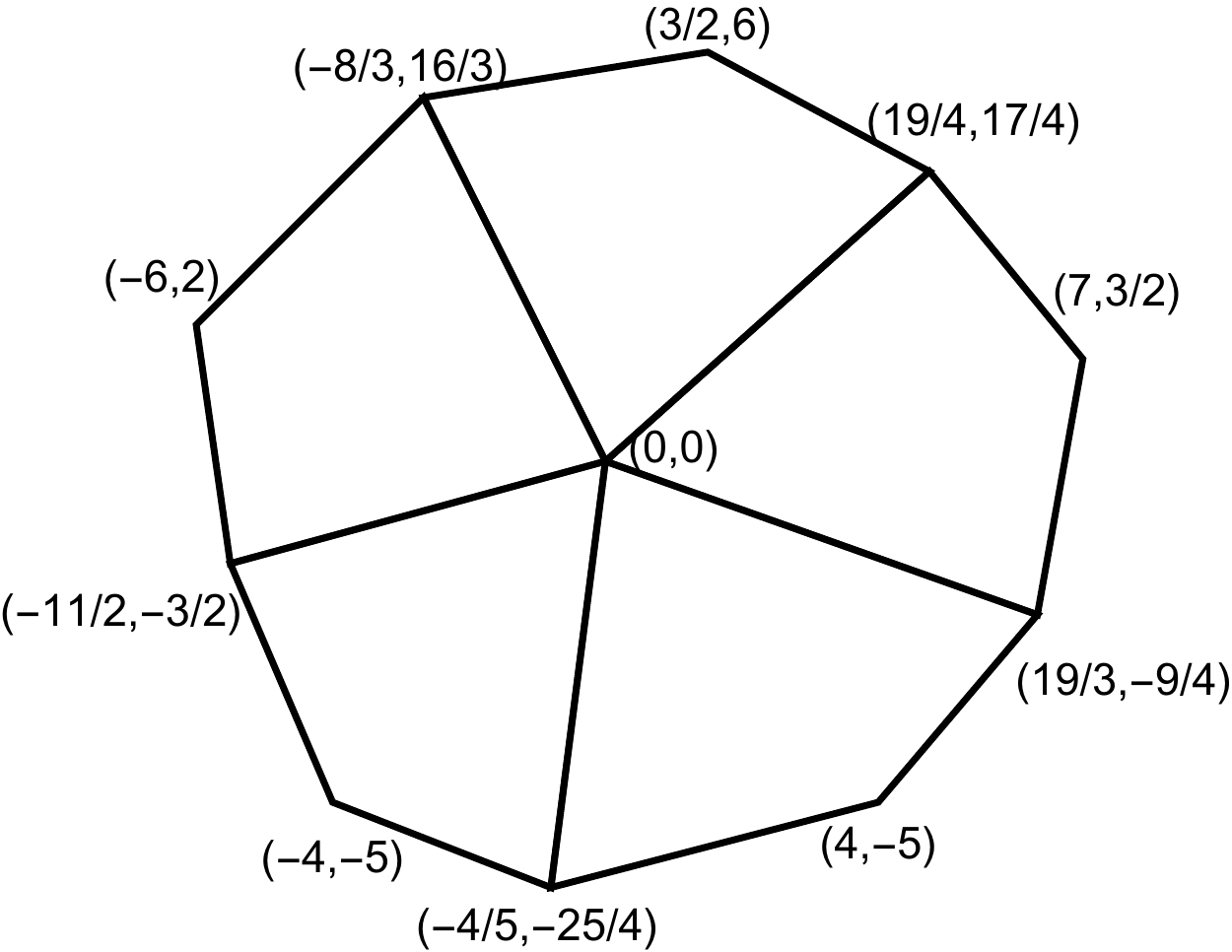} &
\includegraphics[width=.35\textwidth,clip]{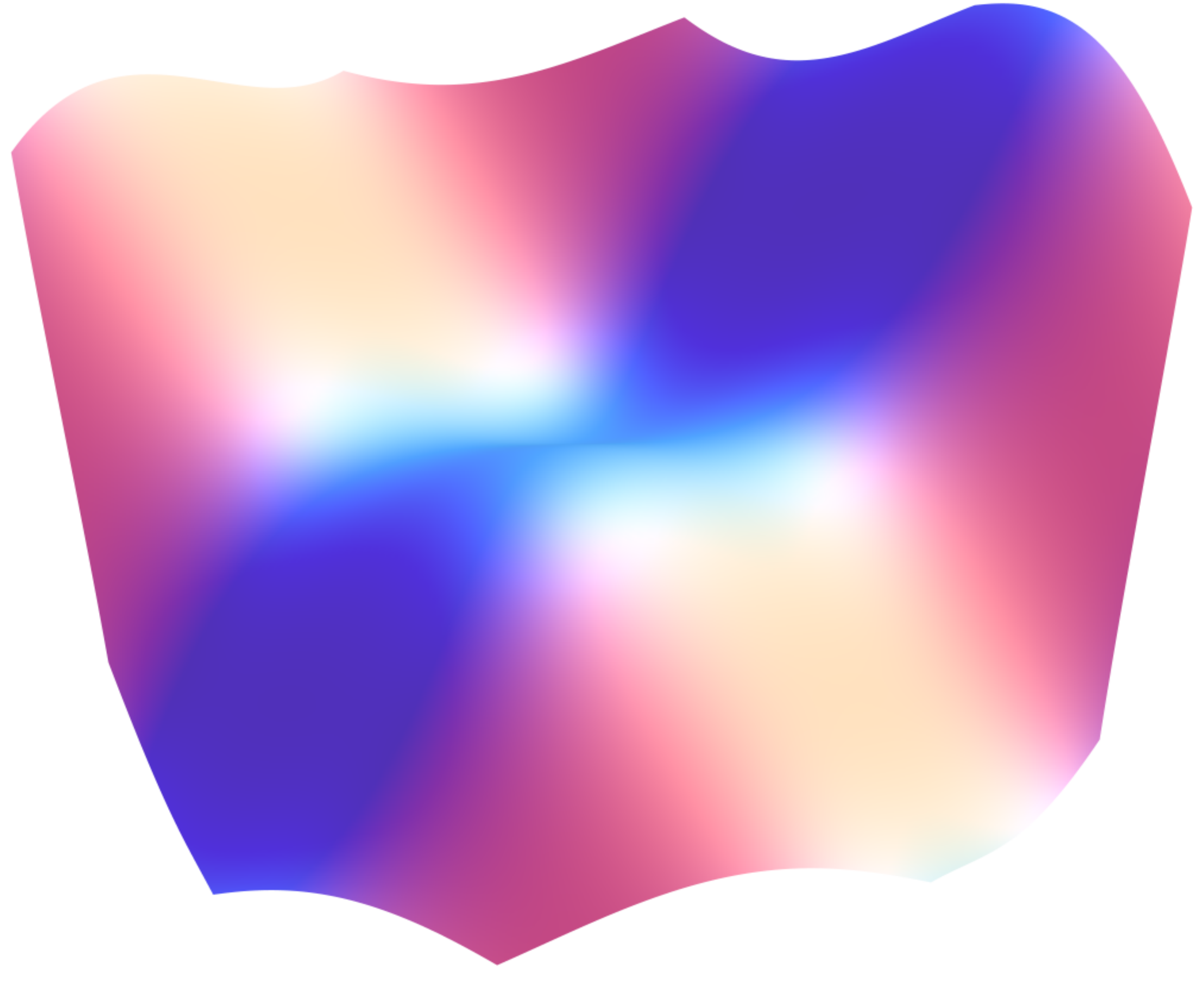} \\
Quadrilateral mesh~$\Mesh$ & Exact solution \\
\includegraphics[width=.45\textwidth,clip]{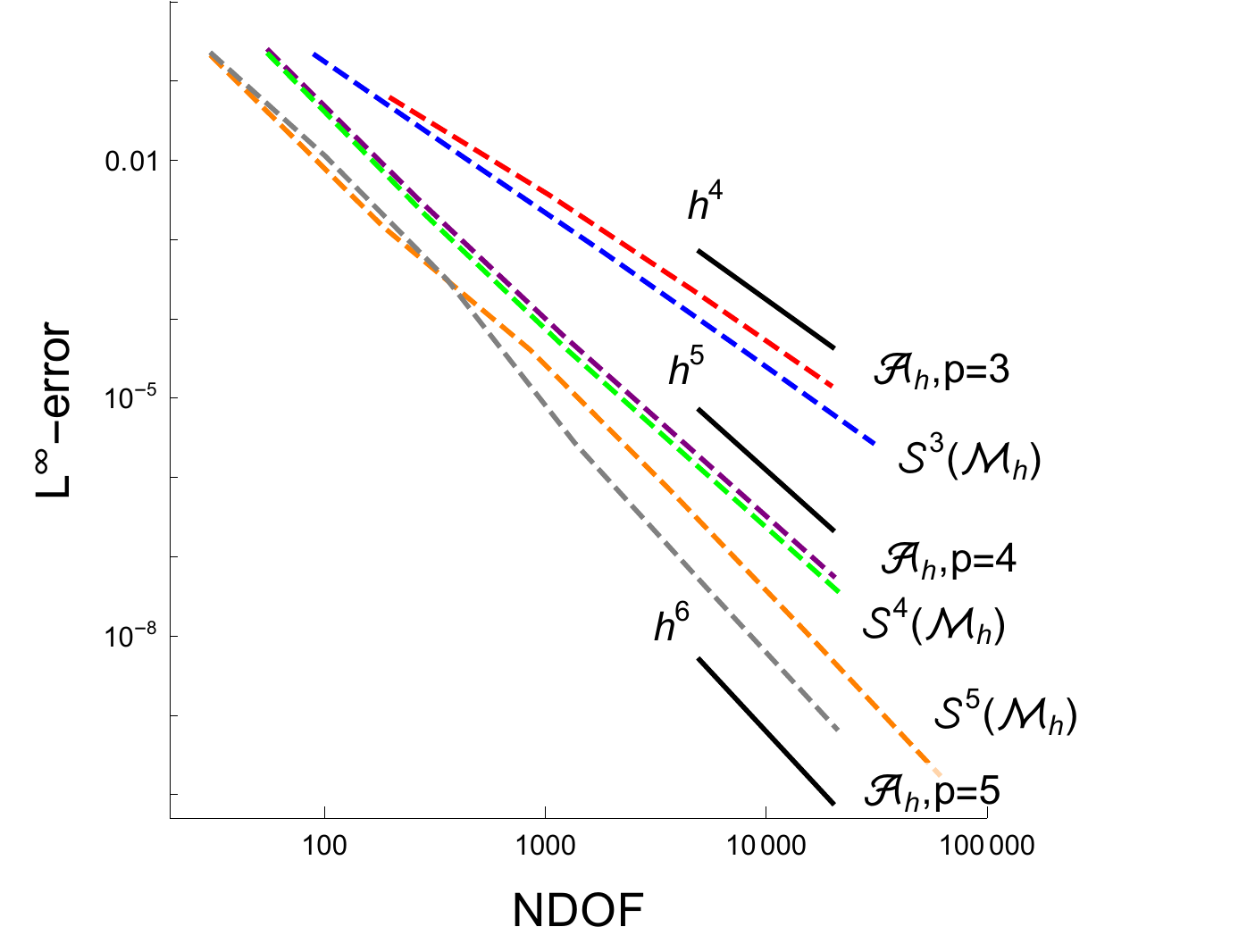} &
\includegraphics[width=.45\textwidth,clip]{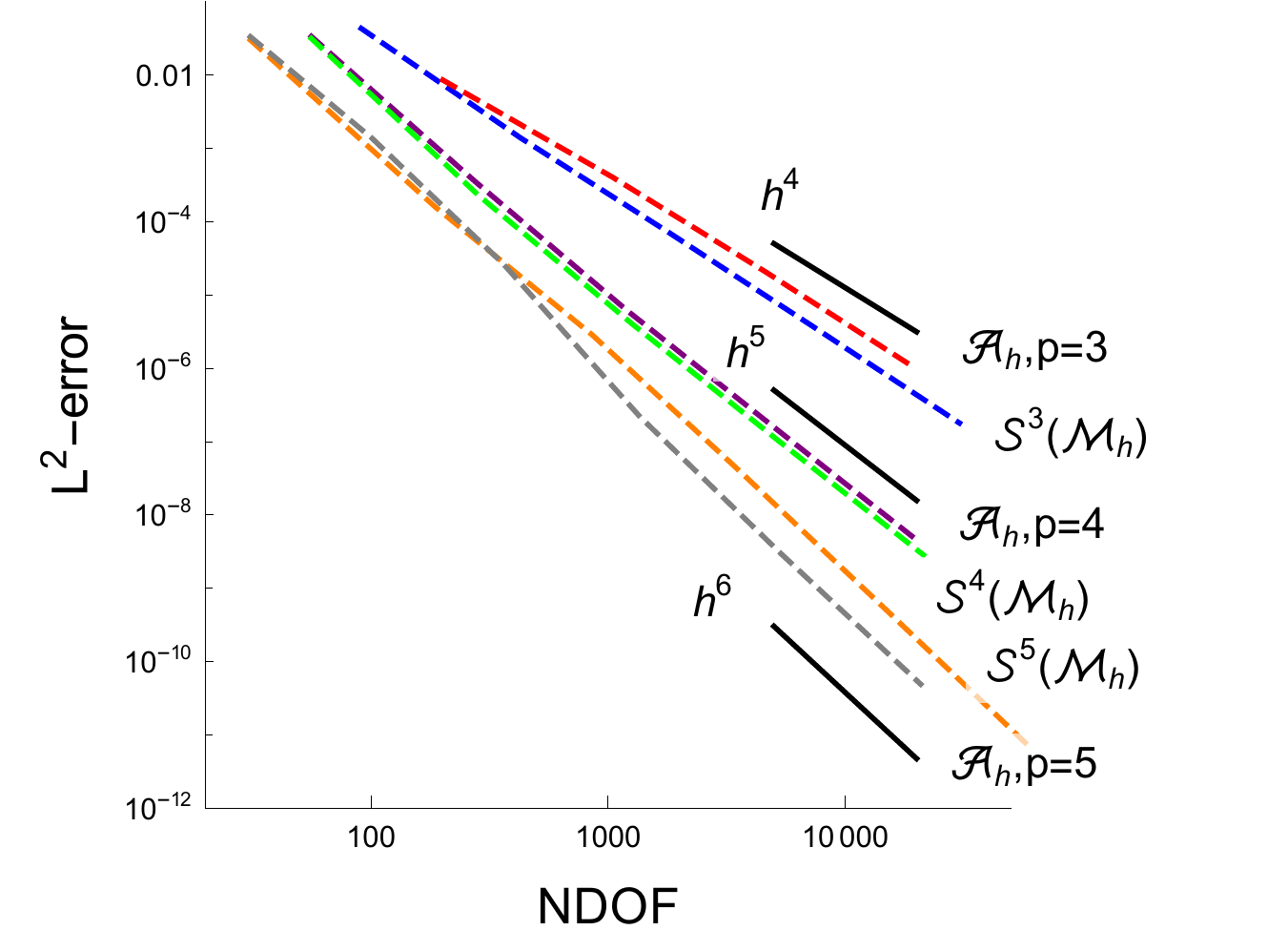} \\
$L^{\infty}$ error & Rel. $L^2$ error \\
\includegraphics[width=.45\textwidth,clip]{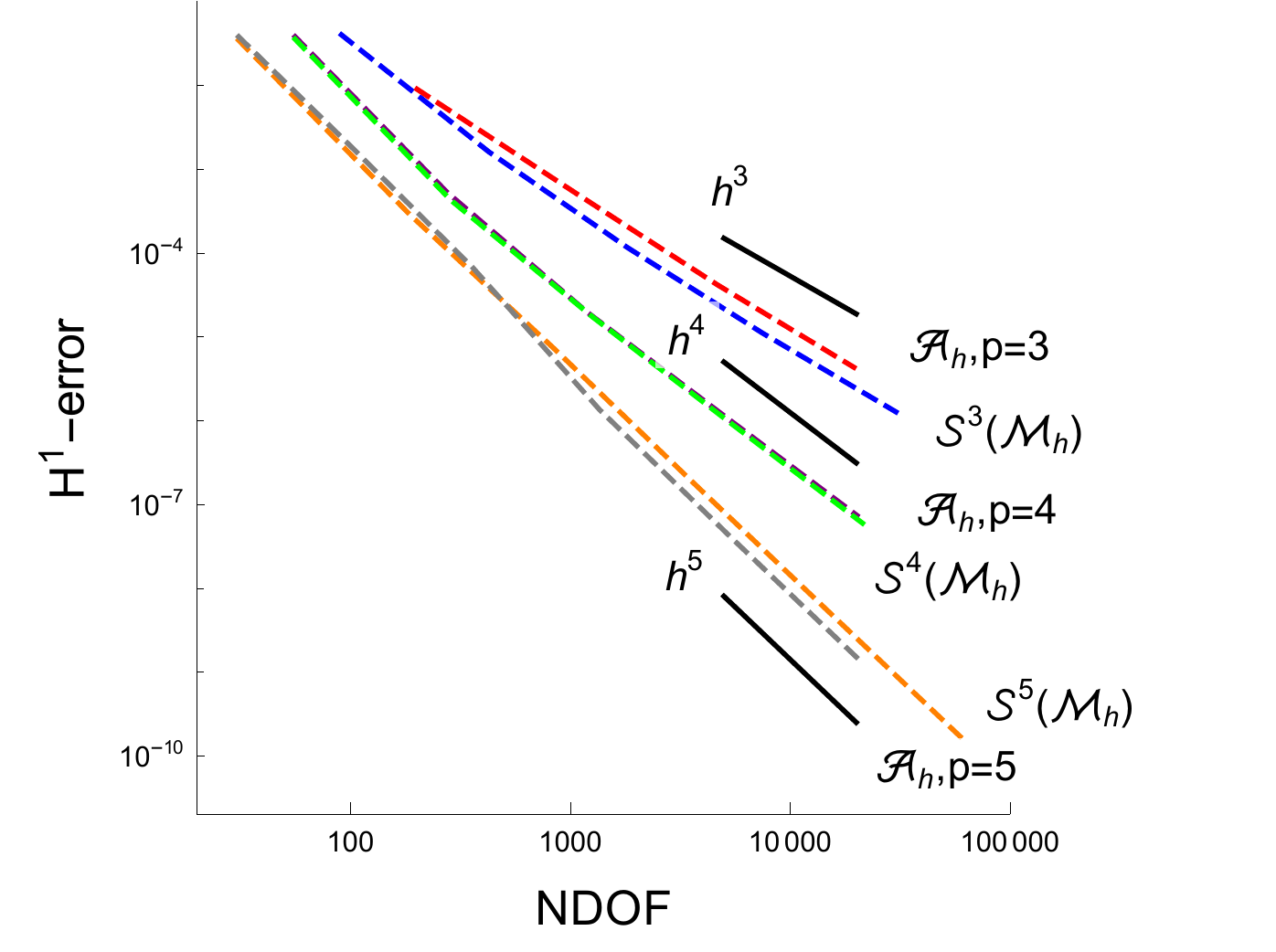} &
\includegraphics[width=.45\textwidth,clip]{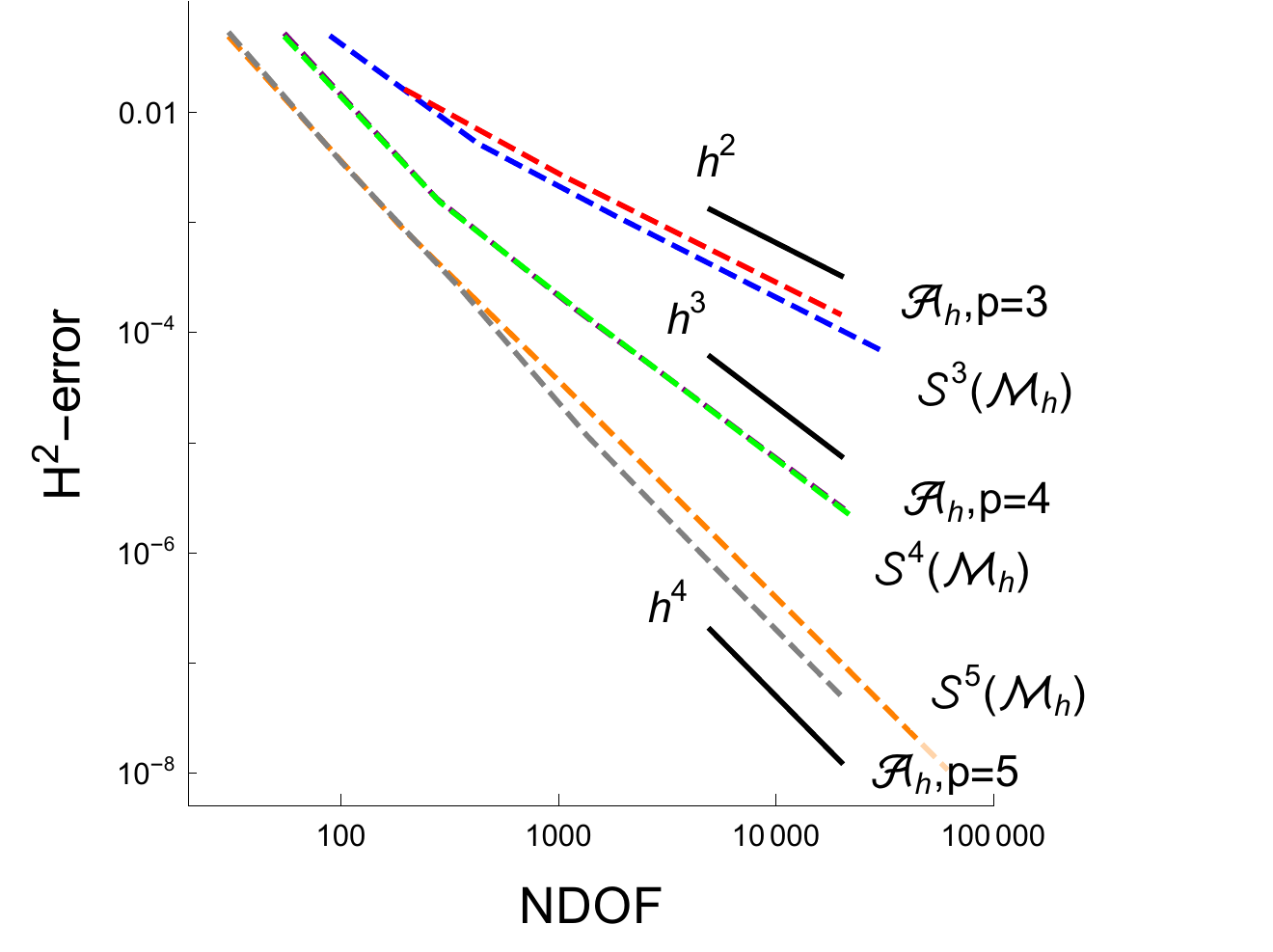} \\
Rel. $H^1$ error & Rel. $H^2$ error
\end{tabular}
\caption{Solving the biharmonic equation~\eqref{eq:problem_biharmonic} on the given quadrilateral mesh~$\Mesh$ (top row, left) for the exact solution~\eqref{eq:exact_comparison} using 
the two $C^1$-smooth spaces~$\globalspace^p(\Mesh_h)$ and $\mathcal{A}_h$ with the resulting $L^{\infty}$ and relative $L^{2}$, $H^1$, $H^2$-errors (middle and bottom row). 
See Example~\ref{ex:example_comparison}.}
\label{fig:example_comparison}
\end{figure}
\end{example}

\begin{example} \label{ex:example_non-nested}
In the previous examples the meshes were always nested, even though the spaces were not. Thus, when refining using a regular split, the elements tend to become closer in shape to parallelograms. This is not necessary for optimal convergence, as can be seen in the present example. Here, we reproduce the mesh refinement presented in~\cite[Figure 1]{ArBoFa02}, for the meshes~$\Mesh_h$ as depicted in Figure~\ref{fig:example_non-nested} (top row), and solve the biharmonic equation for the exact solution
\begin{equation} \label{eq:exact_non-nested}
u(x_1,x_2)= \frac{1}{4}\left(x_1^3 + 5x_2^2 - 10x_2^3 + x_2^4\right)^2
\end{equation}
using the space~$\globalspace^5(\Mesh_h)$ of degree $p=5$. 
In contrast to~\cite{ArBoFa02} we introduce local spaces for every element, hence no uniform space on the parameter domain exists. Thus we are guaranteed to reproduce polynomials of degree $p$, even though not all polynomials of bi-degree $(p,p)$ are present on the parameter domain $\hatquad$. This is different from~\cite{ArBoFa02}, where a fixed space of polynomials on the parameter domain (e.g. for the serendipity element) yields optimal convergence rates on the presented mesh if and only if the space contains all polynomials of bi-degree $(p,p)$.
\begin{figure}
\centering\footnotesize
\begin{tabular}{cc}
\includegraphics[width=.35\textwidth,clip]{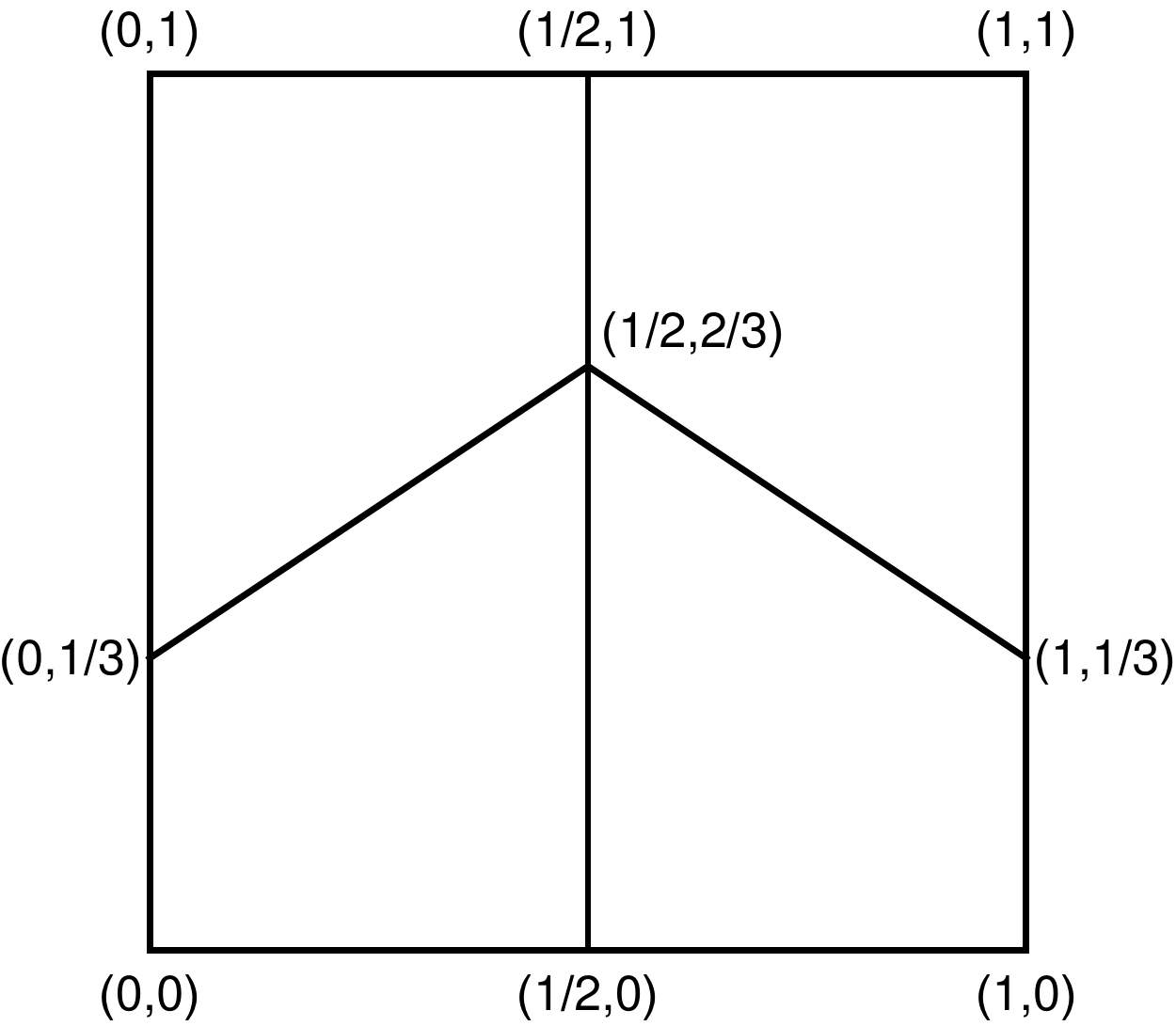} &
\includegraphics[width=.3\textwidth,clip]{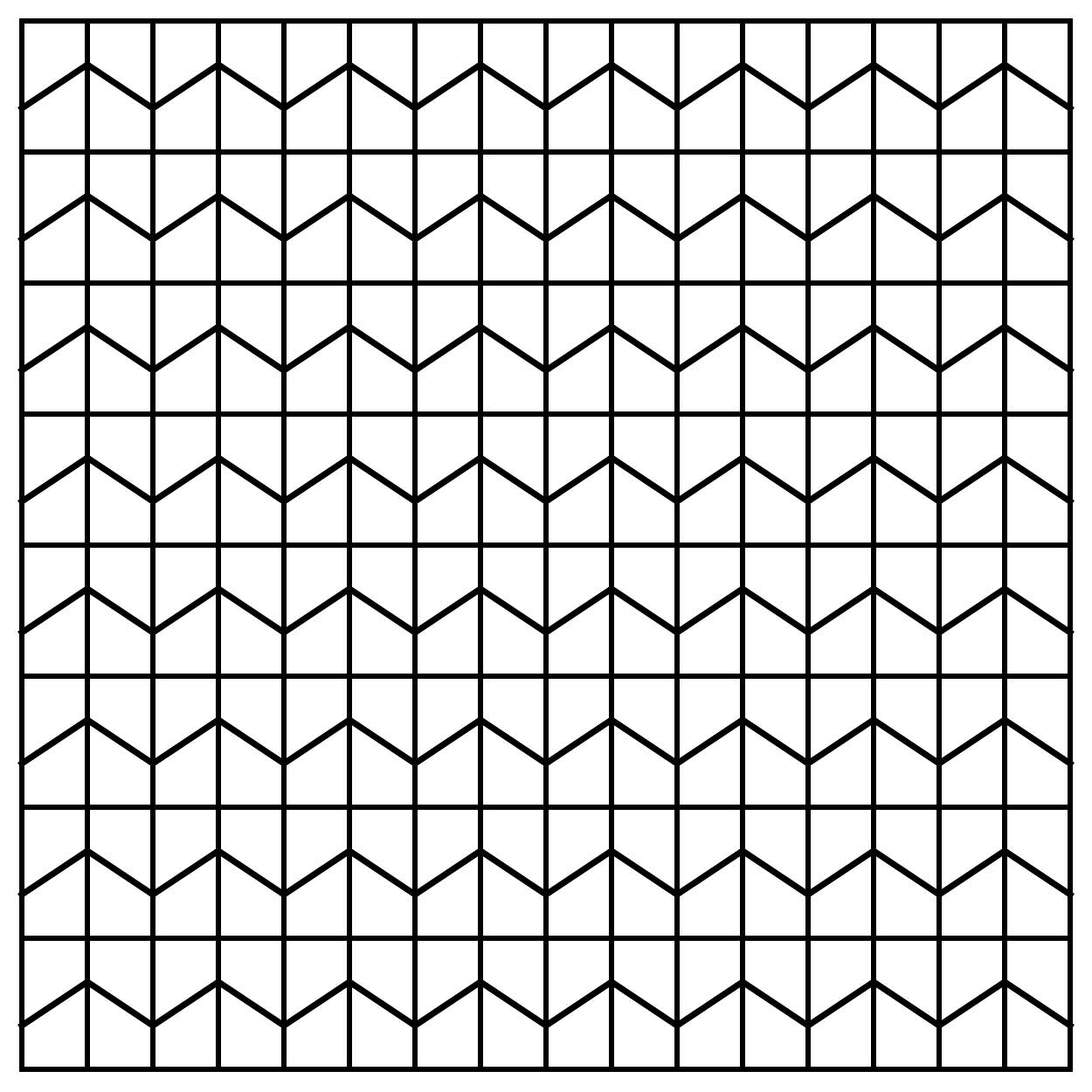} \\
Trapezoid mesh (level $0$) & Trapezoid mesh (level $3$)
\end{tabular}
\includegraphics[width=.45\textwidth,clip]{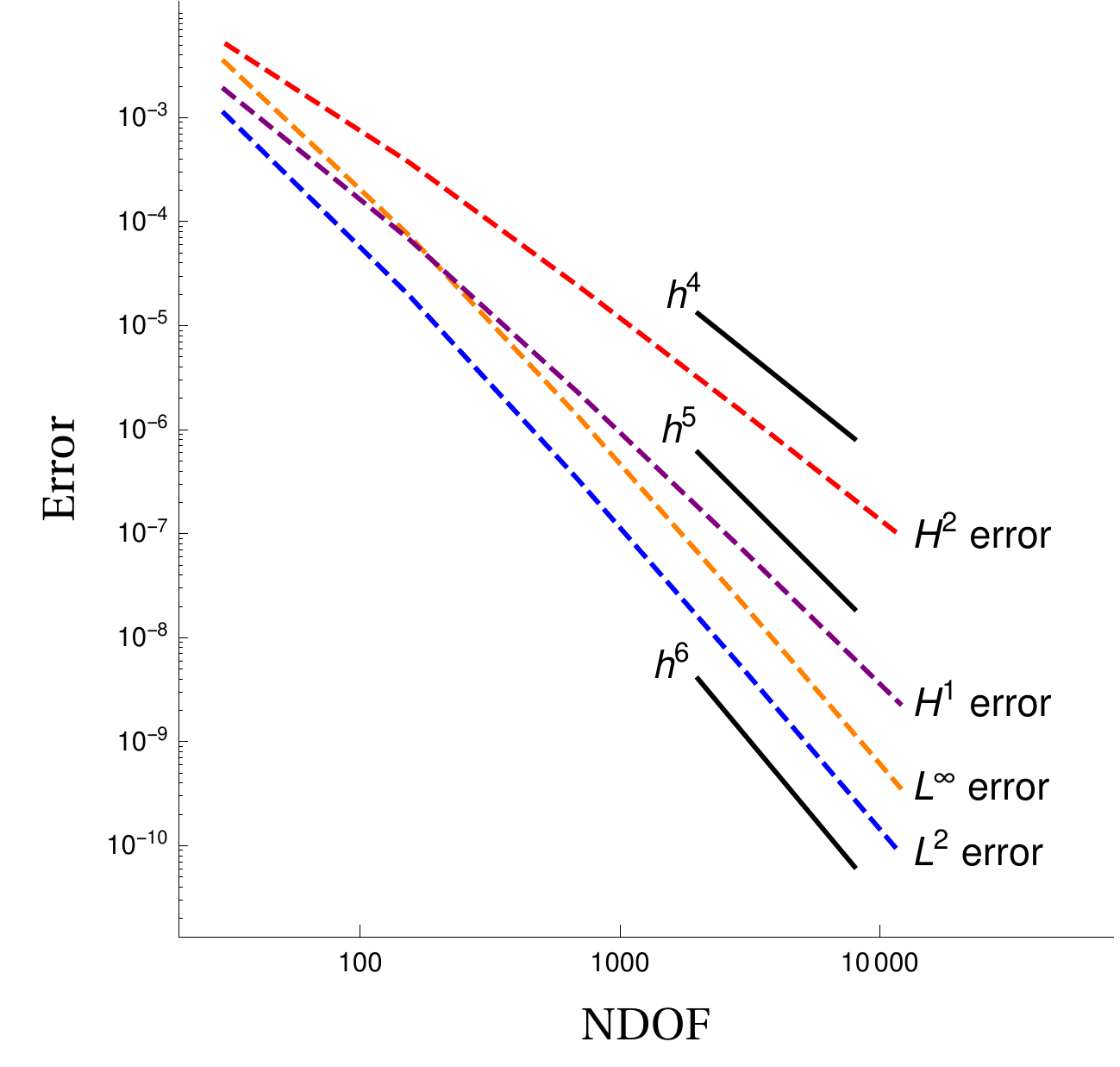} 
\caption{Solving the biharmonic equation~\eqref{eq:problem_biharmonic} on the given, non-nested quadrilateral meshes~$\Mesh_h$ (top row) for the exact solution~\eqref{eq:exact_non-nested} using the $C^1$-smooth space~$\globalspace^5(\Mesh_h)$ with the resulting $L^{\infty}$ and relative $L^{2}$, $H^1$, $H^2$-errors (bottom). 
See Example~\ref{ex:example_non-nested}.}
\label{fig:example_non-nested}
\end{figure}
\end{example}

\begin{example} \label{ex:example_trianglequad}
The goal is to compare the BS  quadrilateral with the Argyris triangle of degree~$p=5$, comparing the spaces~$\globalspace^5(\Mesh_h)$ and~$\globalspace^5(\mathcal{T}_h)$, respectively, where $\mathcal{T}_h$ is the resulting refined triangular mesh obtained via splitting each triangle in a regular way into four sub-triangles. 
For this, we solve the biharmonic equation~\eqref{eq:problem_biharmonic} on two different computational domains, where the corresponding quadrilateral and triangular meshes are 
given in the top rows of Fig.~\ref{fig:example_trianglequad1} and \ref{fig:example_trianglequad2}. In our examples, the quadrilateral and triangular meshes possess in each case the same 
vertices. The considered exact solution is on the one hand 
\begin{equation} \label{eq:exact_trianglequad}
u(x_1,x_2)= 200 \left(x_1 x_2 (1-x_1)(1-x_2)\right)^2
\end{equation}
for the computational domain from Fig.~\ref{fig:example_trianglequad1} (top row, right), and on the other hand
\begin{equation} \label{eq:exact_trianglequad2}
\begin{array}{ll}
u(x_1,x_2)=&\frac{1}{10^7} \left(\left(\frac{13}{5}-x_2\right)\left(\frac{26}{5} + \frac{26 x_1}{15} - x_2\right)\left(\frac{26}{5}+\frac{26x_1}{15}+x_2\right)\right.
\\&
\left.
\left(\frac{13}{5}+x_2\right)\left(\frac{26}{5}-\frac{26x_1}{15}+x_2\right)
\left(\frac{26}{5}-\frac{26x_1}{15}-x_2\right)\right)^2.
\end{array}
\end{equation}
for the computational domain from Fig.~\ref{fig:example_trianglequad2} (top row, right), and fulfills for both cases homogeneous boundary conditions of order~$1$. While in 
Fig.~\ref{fig:example_trianglequad1} the more regular configuration is used for the quadrilateral mesh compared to the triangular one, it is in the opposite way around for the 
meshes in Fig.~\ref{fig:example_trianglequad2}. The numerical results, which are shown in the middle and bottom rows of Fig.~\ref{fig:example_trianglequad1} and 
Fig.~\ref{fig:example_trianglequad2}, and which are compared with respect to the number of degrees of freedom (NDOF), indicate that the BS  quadrilateral 
spaces $\globalspace^5(\Mesh_h)$ perform significantly better than the Argyris triangle spaces~$\globalspace^5(\mathcal{T}_h)$ for the more ``quad-regular'' case 
(cf. Fig.~\ref{fig:example_trianglequad1}) and just slightly worse for the more ``triangle-regular'' case (cf. Fig.~\ref{fig:example_trianglequad2}). However, the rates are not affected, as in all 
considered instances, the resulting $L^{\infty}$-error as well as the relative $L^2$, $H^1$ and $H^2$-errors decrease with optimal order 
of $\mathcal{O}(h^6)$, $\mathcal{O}(h^6)$, $\mathcal{O}(h^5)$ and $\mathcal{O}(h^4)$, respectively.    
 
\begin{figure}
\centering\footnotesize
\begin{tabular}{ccc}
\includegraphics[width=.3\textwidth,clip]{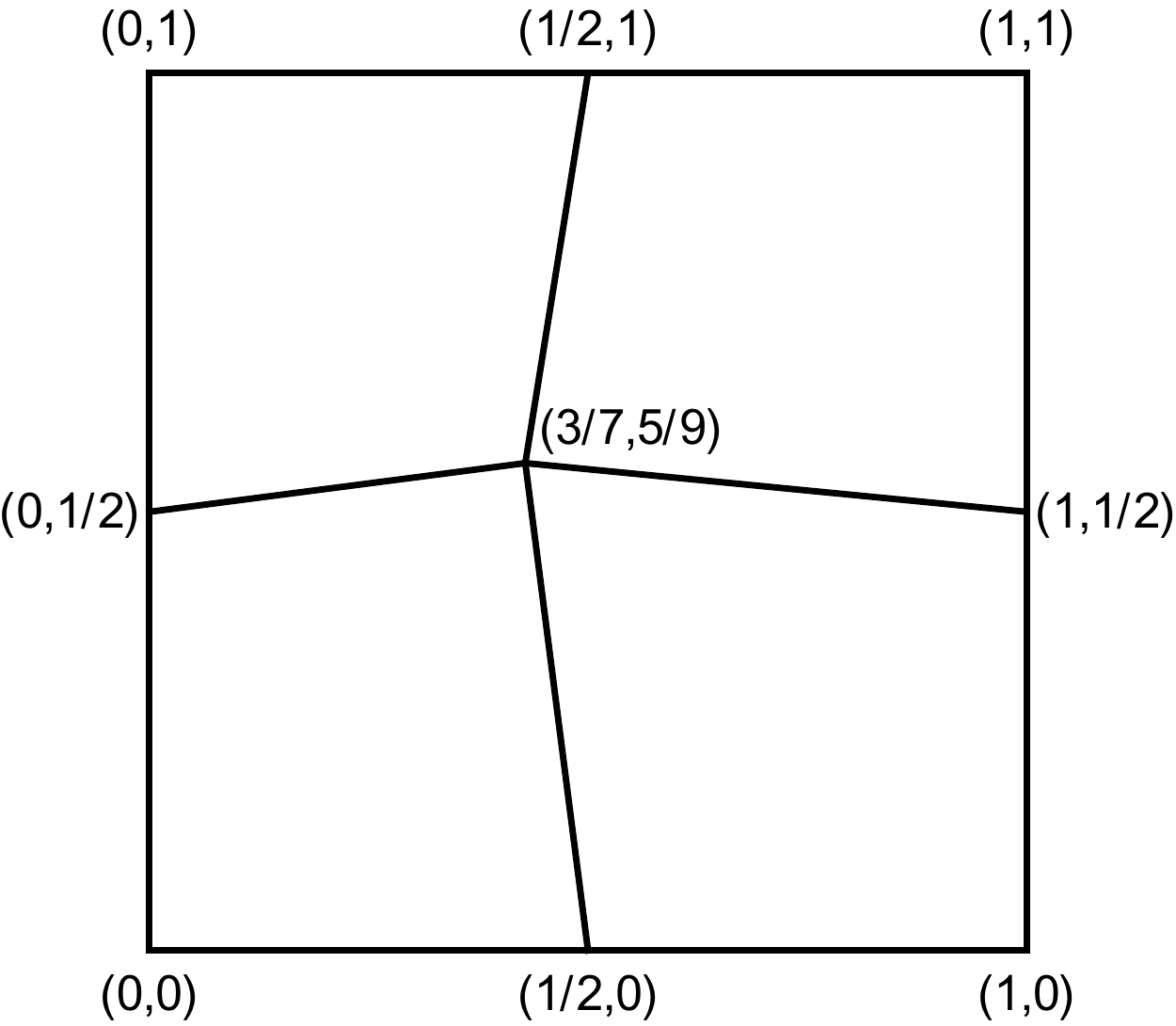} &
\includegraphics[width=.3\textwidth,clip]{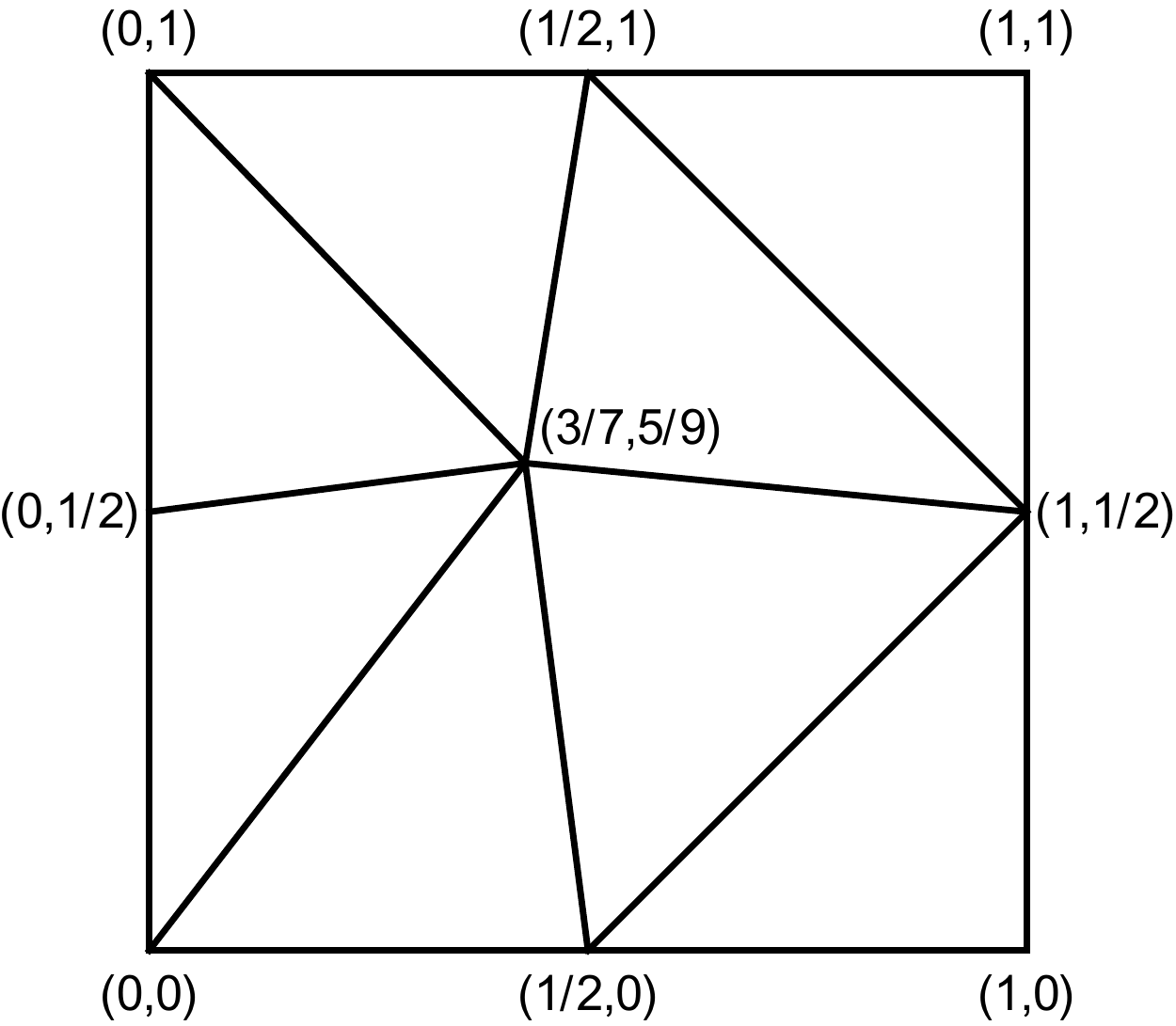} &
\includegraphics[width=.3\textwidth,clip]{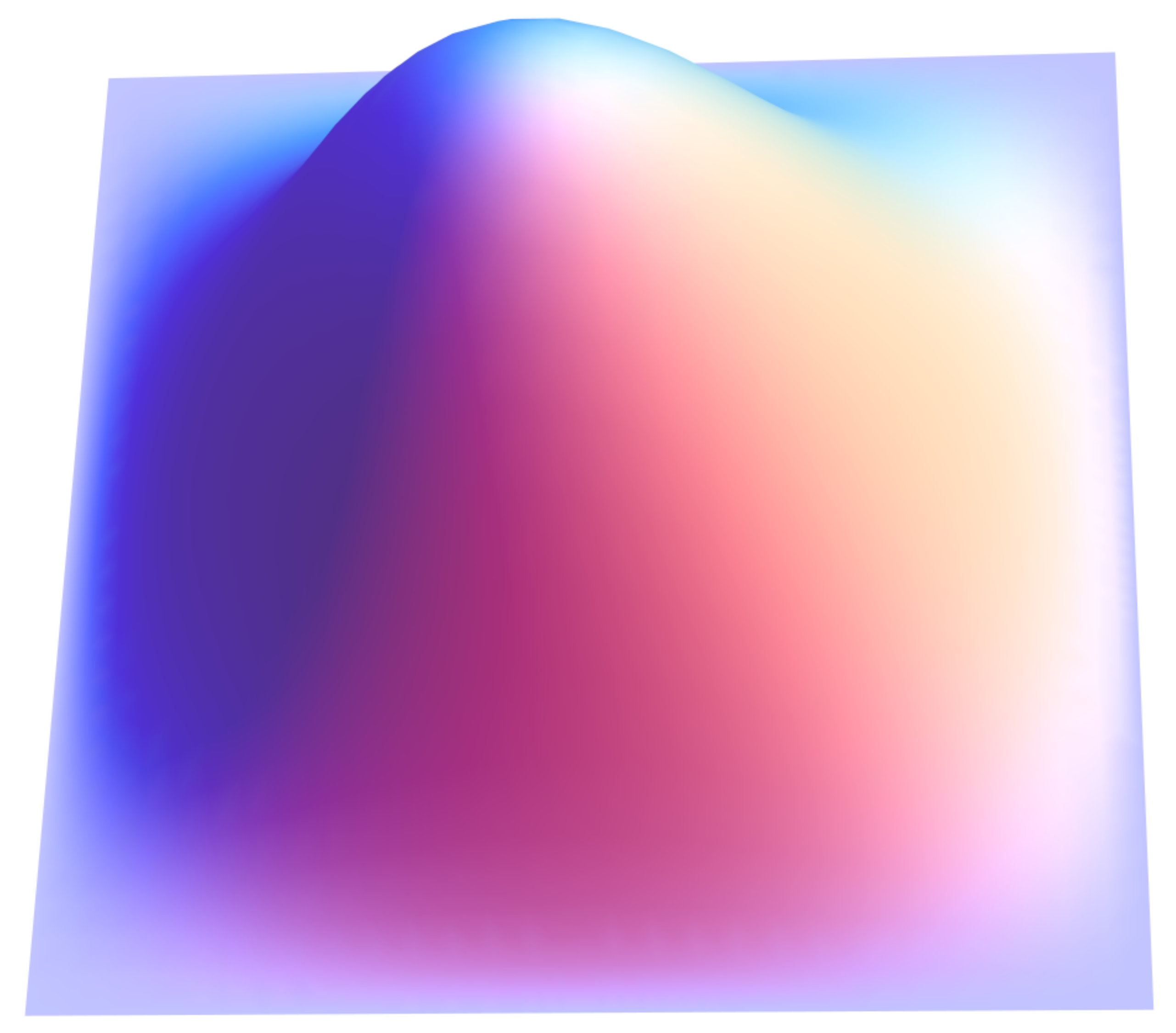} \\
Quadrilateral mesh & Triangular mesh & Exact solution
\end{tabular}
\begin{tabular}{cc}
\includegraphics[width=.45\textwidth,clip]{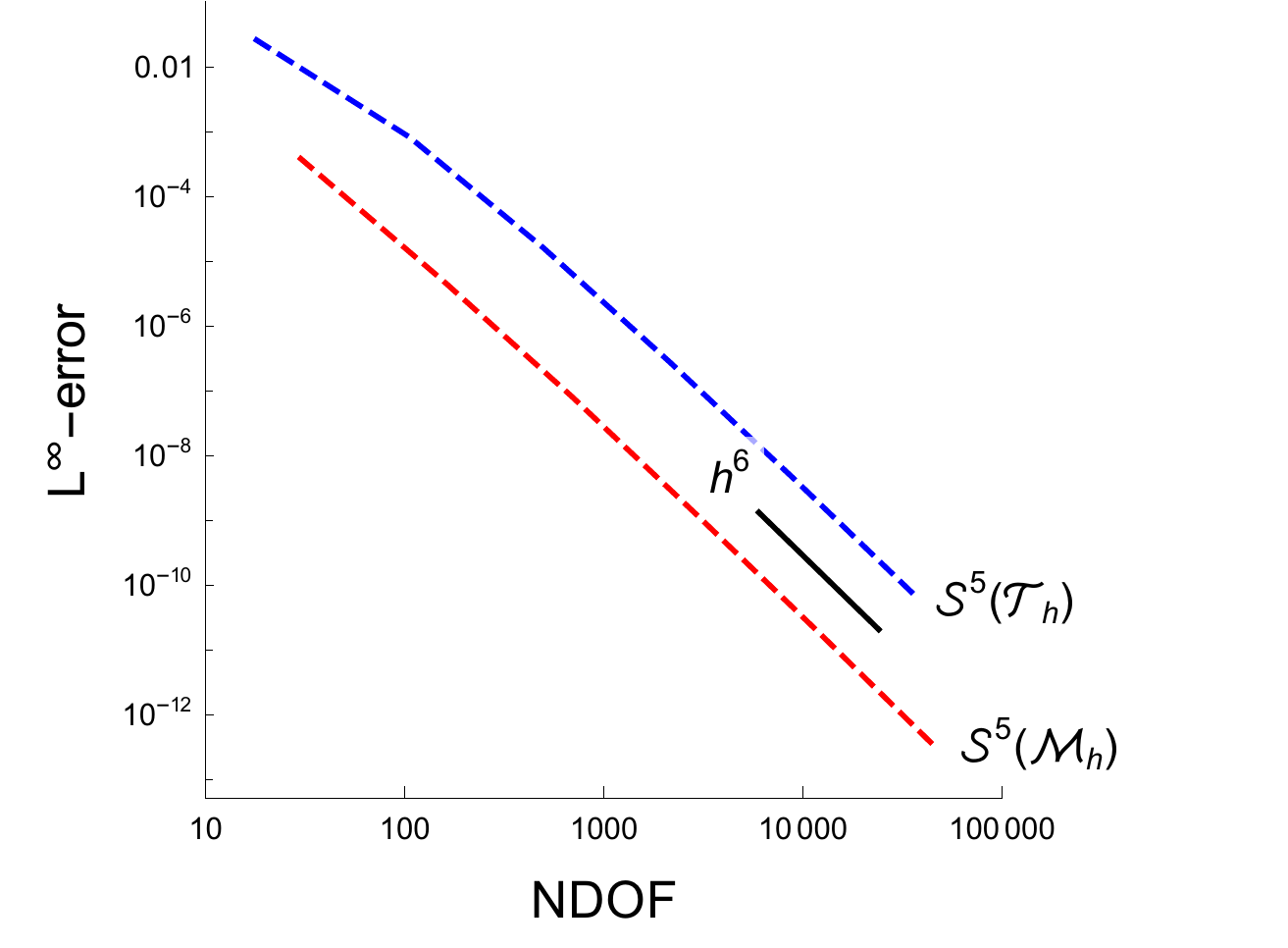} &
\includegraphics[width=.45\textwidth,clip]{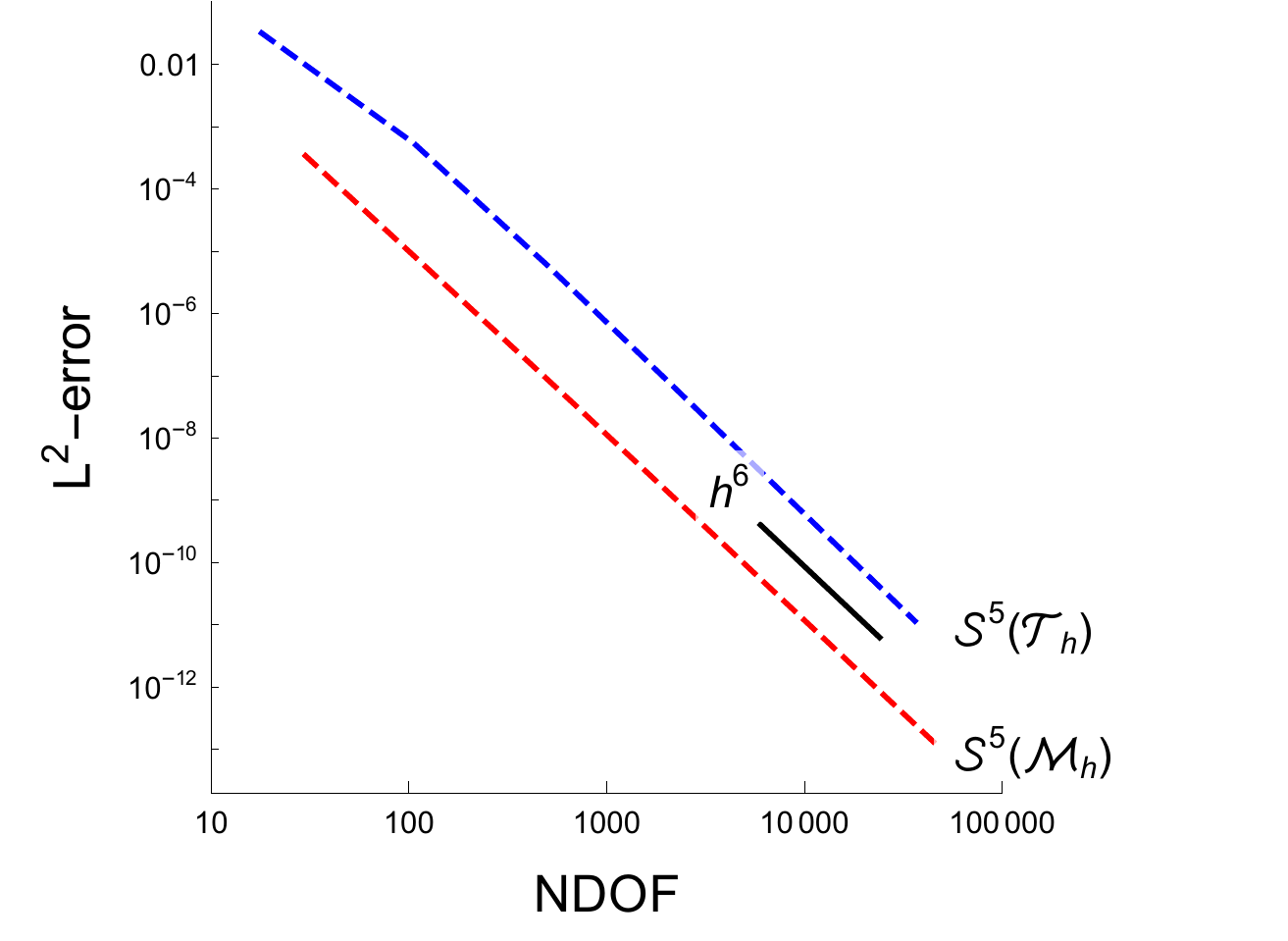} \\
$L^{\infty}$ error & Rel. $L^2$ error \\
\includegraphics[width=.45\textwidth,clip]{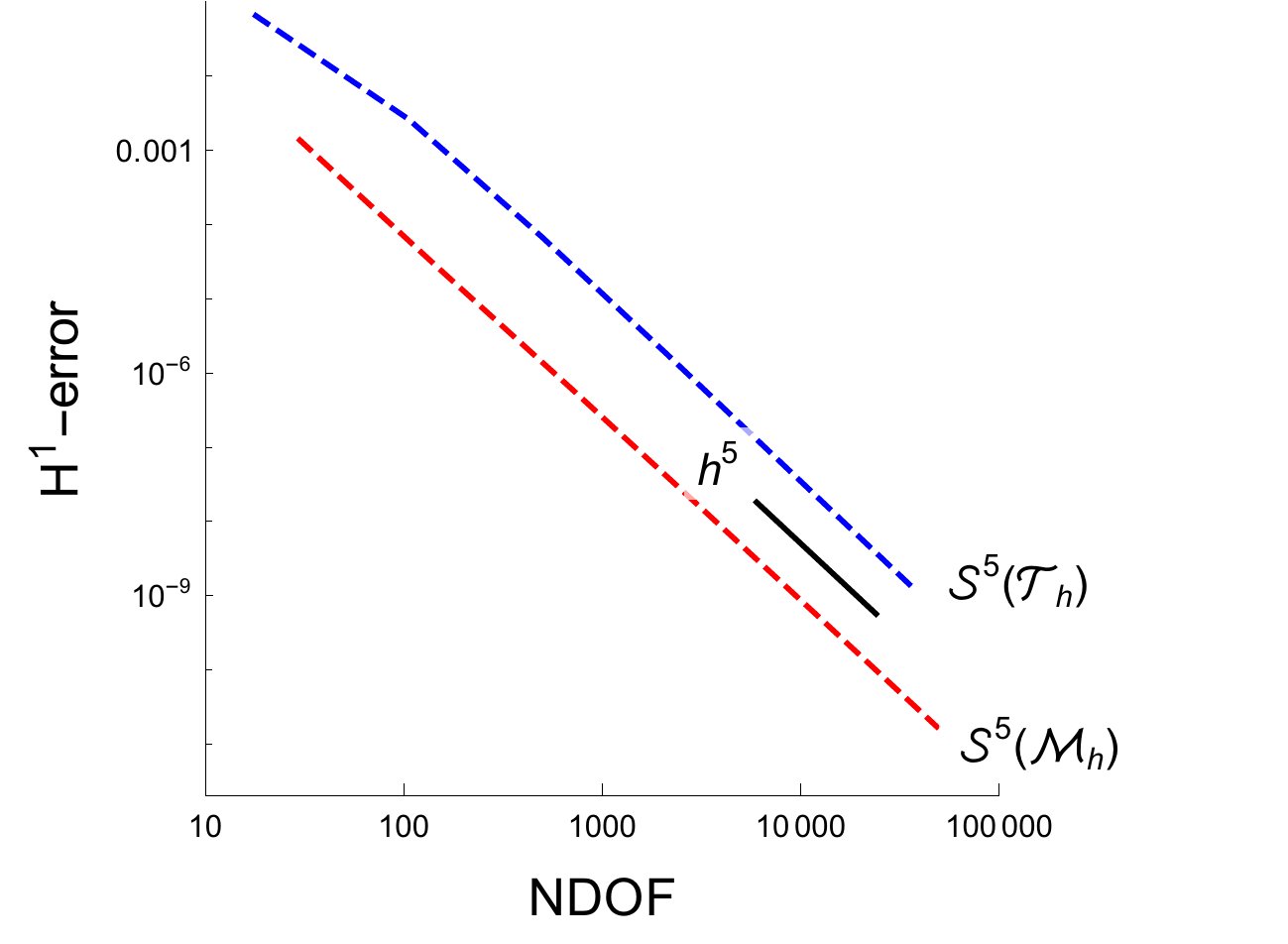} &
\includegraphics[width=.45\textwidth,clip]{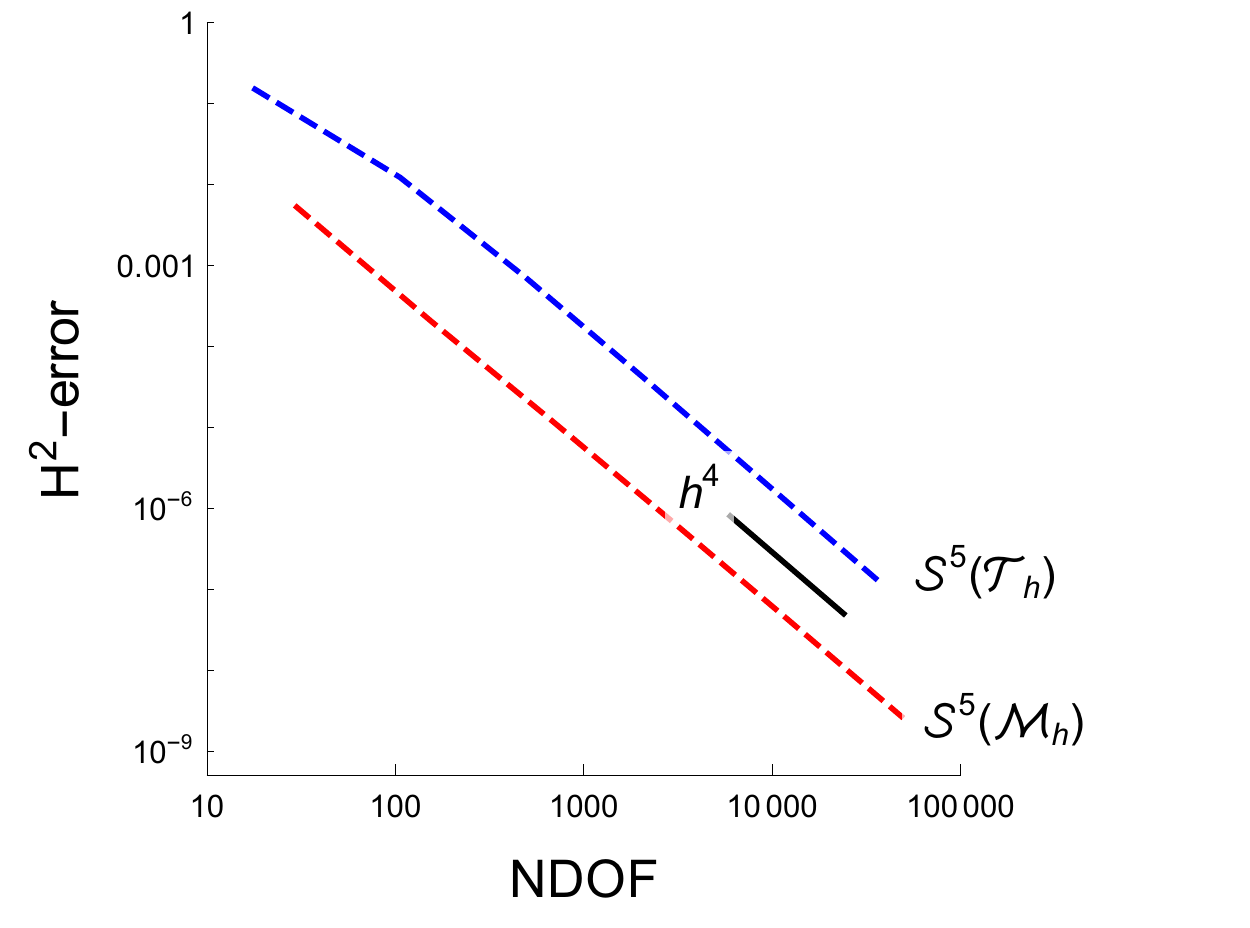} \\
Rel. $H^1$ error & Rel. $H^2$ error
\end{tabular}
\caption{Comparison of using the BS  quadrilateral spaces~$\globalspace^5(\Mesh_h)$ with the Argyris triangle spaces~$\globalspace^5(\mathcal{T}_h)$ 
for solving the biharmonic equation~\eqref{eq:problem_biharmonic} on the same computational domain defined either by a quadrilateral (top row, left) or a triangle mesh~(top row, middle). Exact solution~\eqref{eq:exact_trianglequad} (top row, right) and the resulting $L^{\infty}$ and relative $L^{2}$, $H^1$, $H^2$-errors (middle and bottom row). 
See Example~\ref{ex:example_trianglequad}.}
\label{fig:example_trianglequad1}
\end{figure}

\begin{figure}
\centering\footnotesize
\begin{tabular}{ccc}
\includegraphics[width=.3\textwidth,clip]{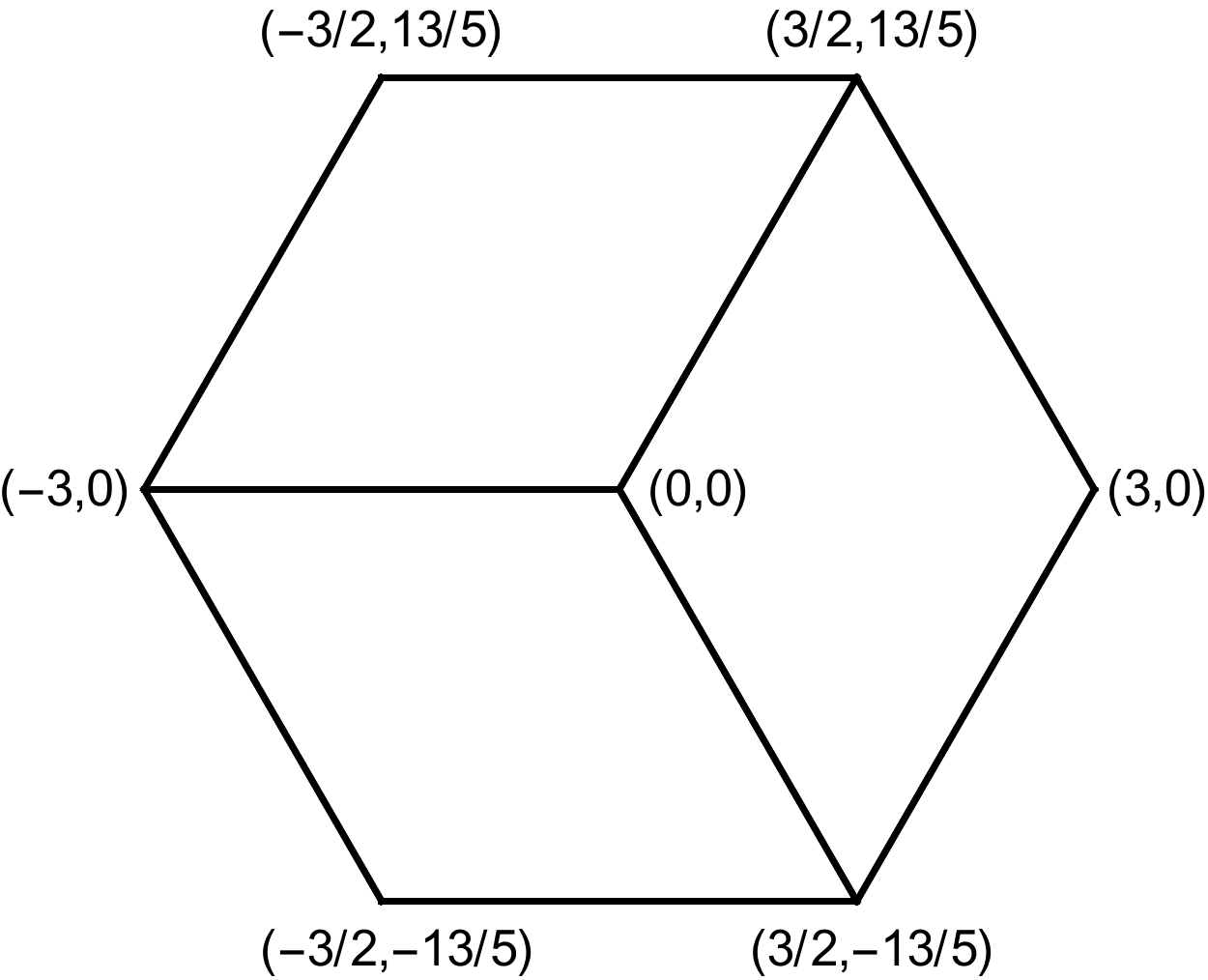} &
\includegraphics[width=.3\textwidth,clip]{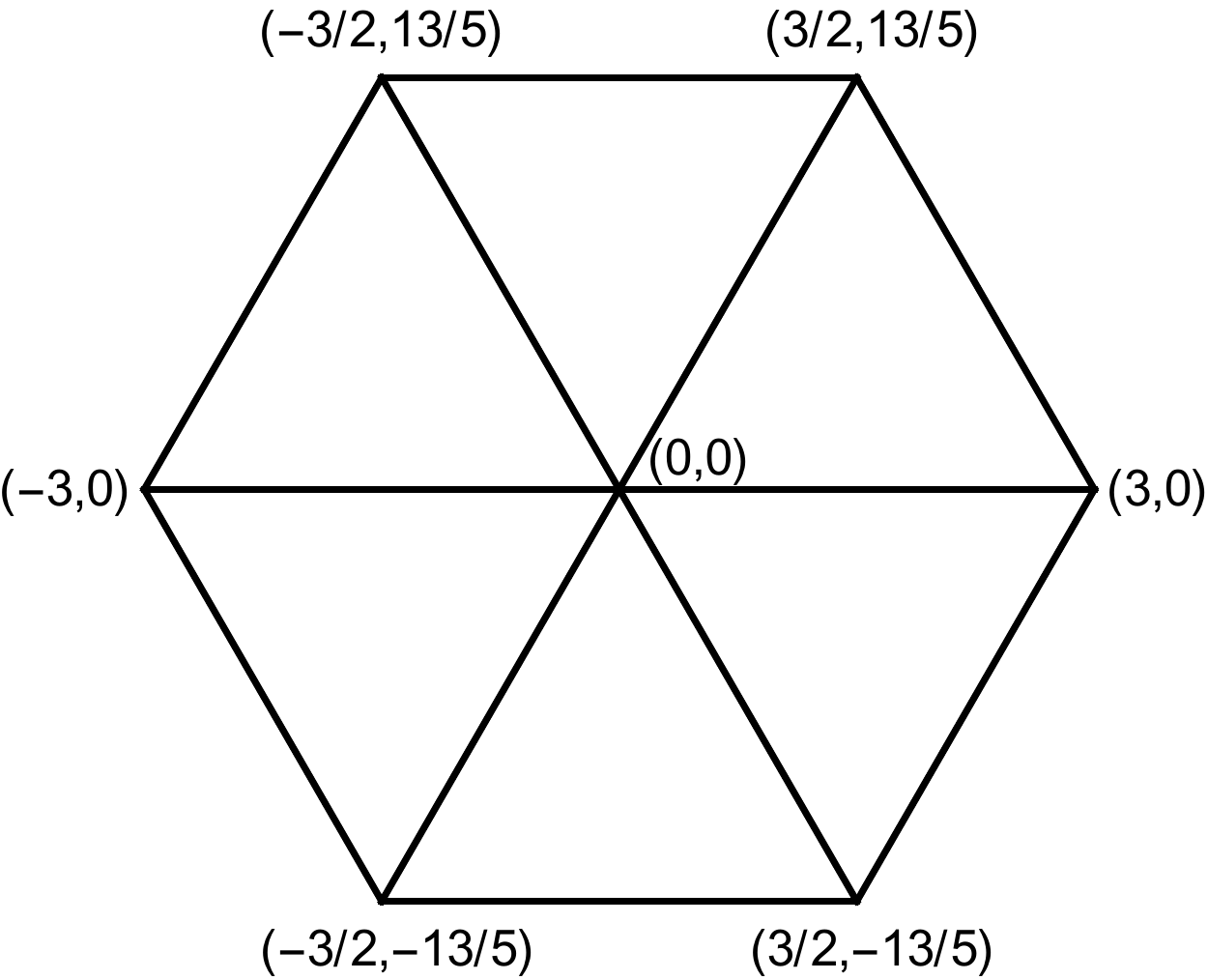} &
\includegraphics[width=.3\textwidth,clip]{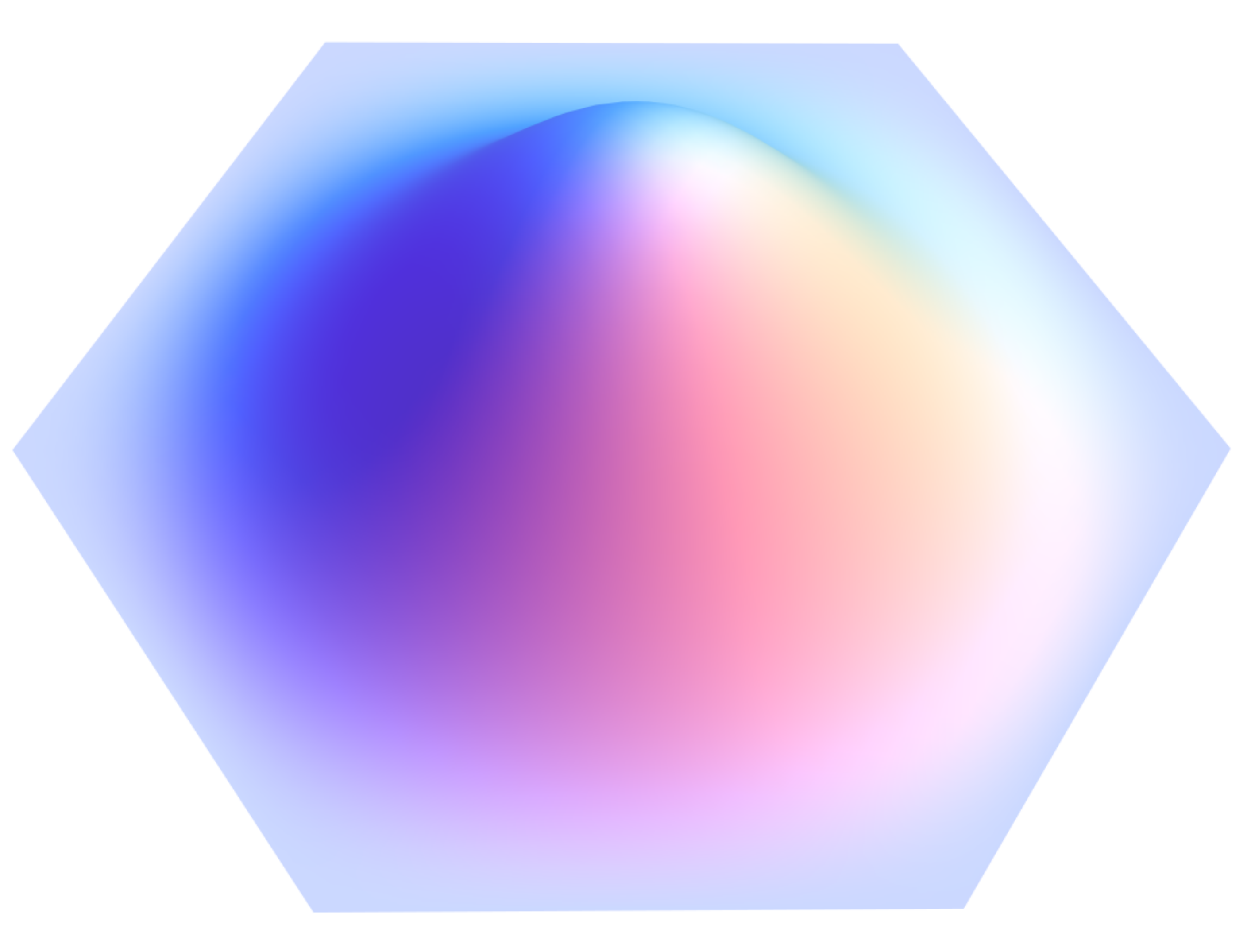} \\
Quadrilateral mesh & Triangular mesh & Exact solution
\end{tabular}
\begin{tabular}{cc}
\includegraphics[width=.45\textwidth,clip]{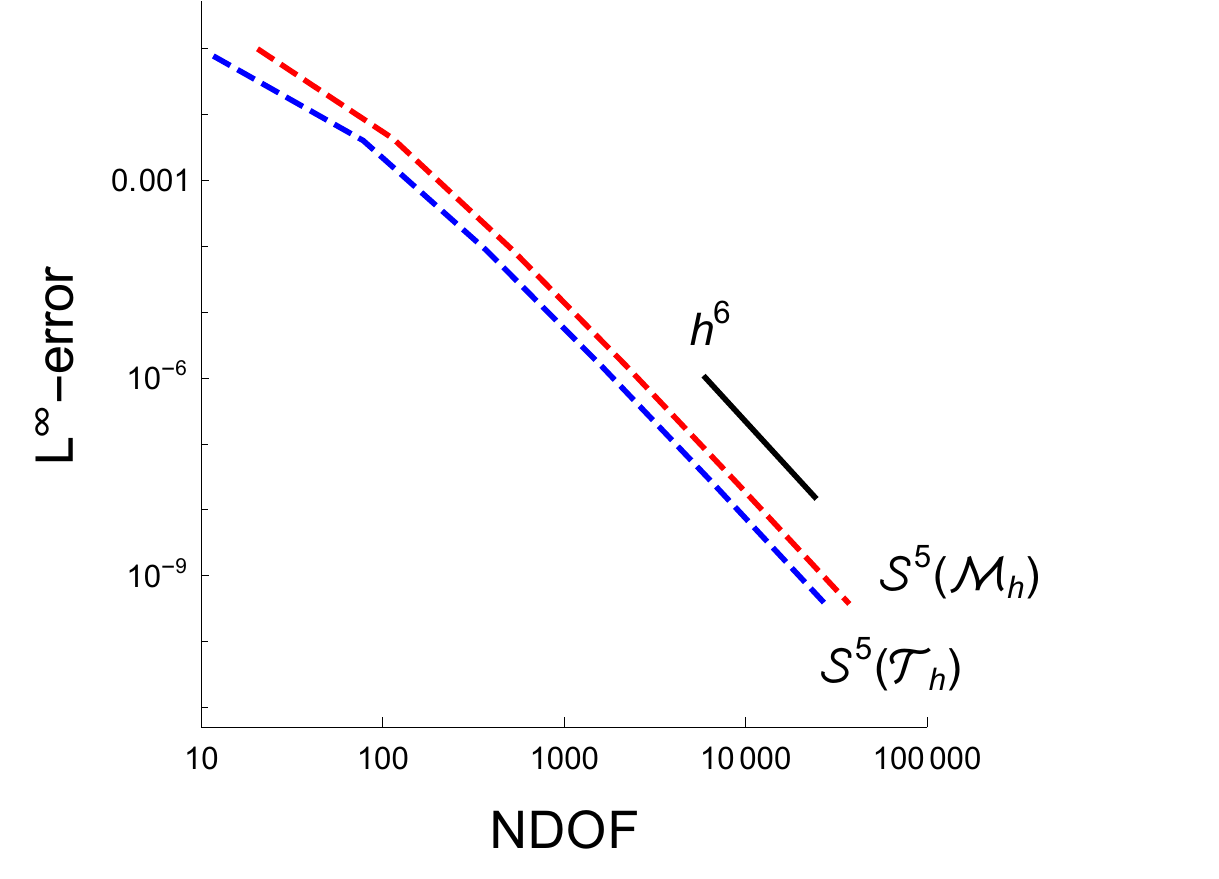} &
\includegraphics[width=.45\textwidth,clip]{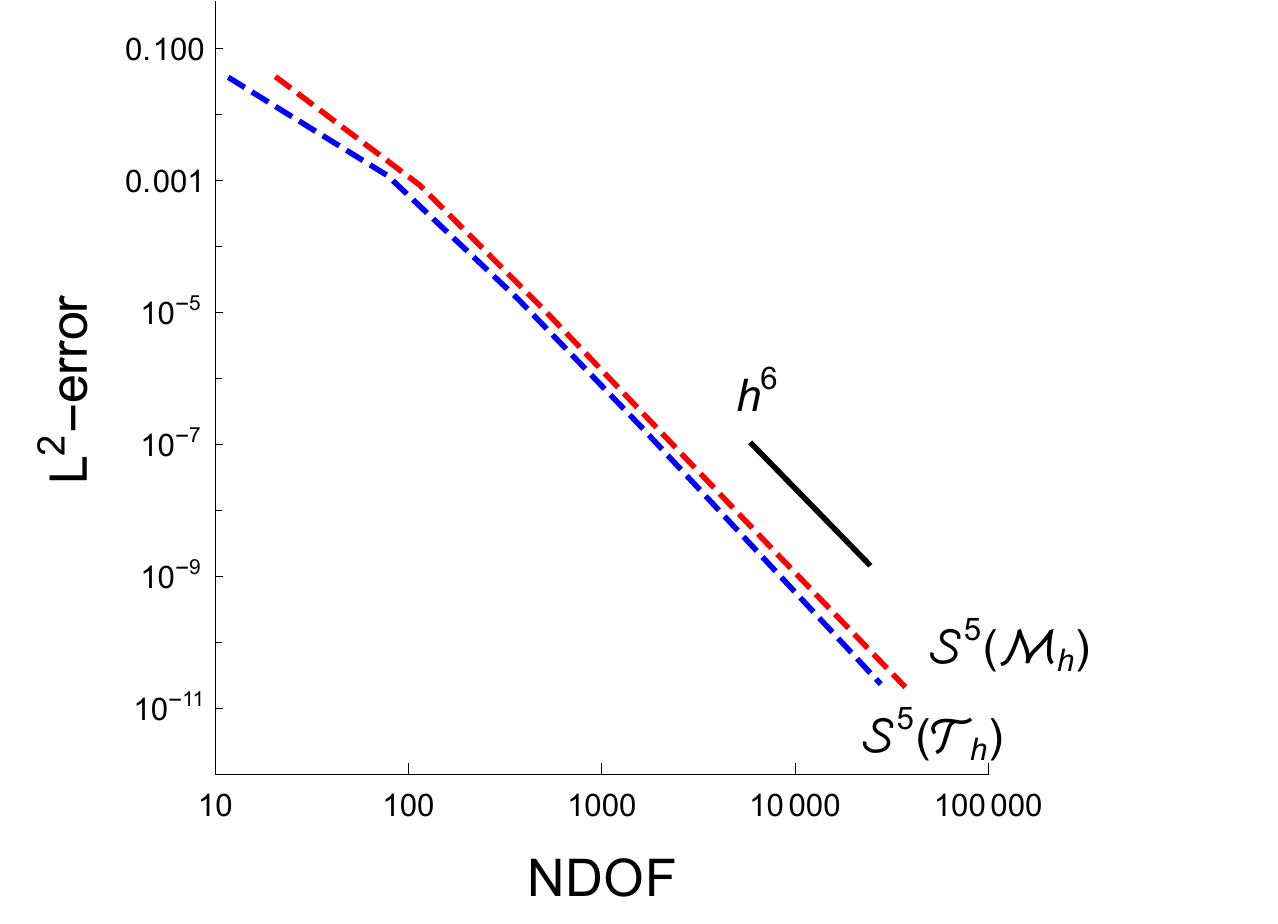} \\
$L^{\infty}$ error & Rel. $L^2$ error \\
\includegraphics[width=.45\textwidth,clip]{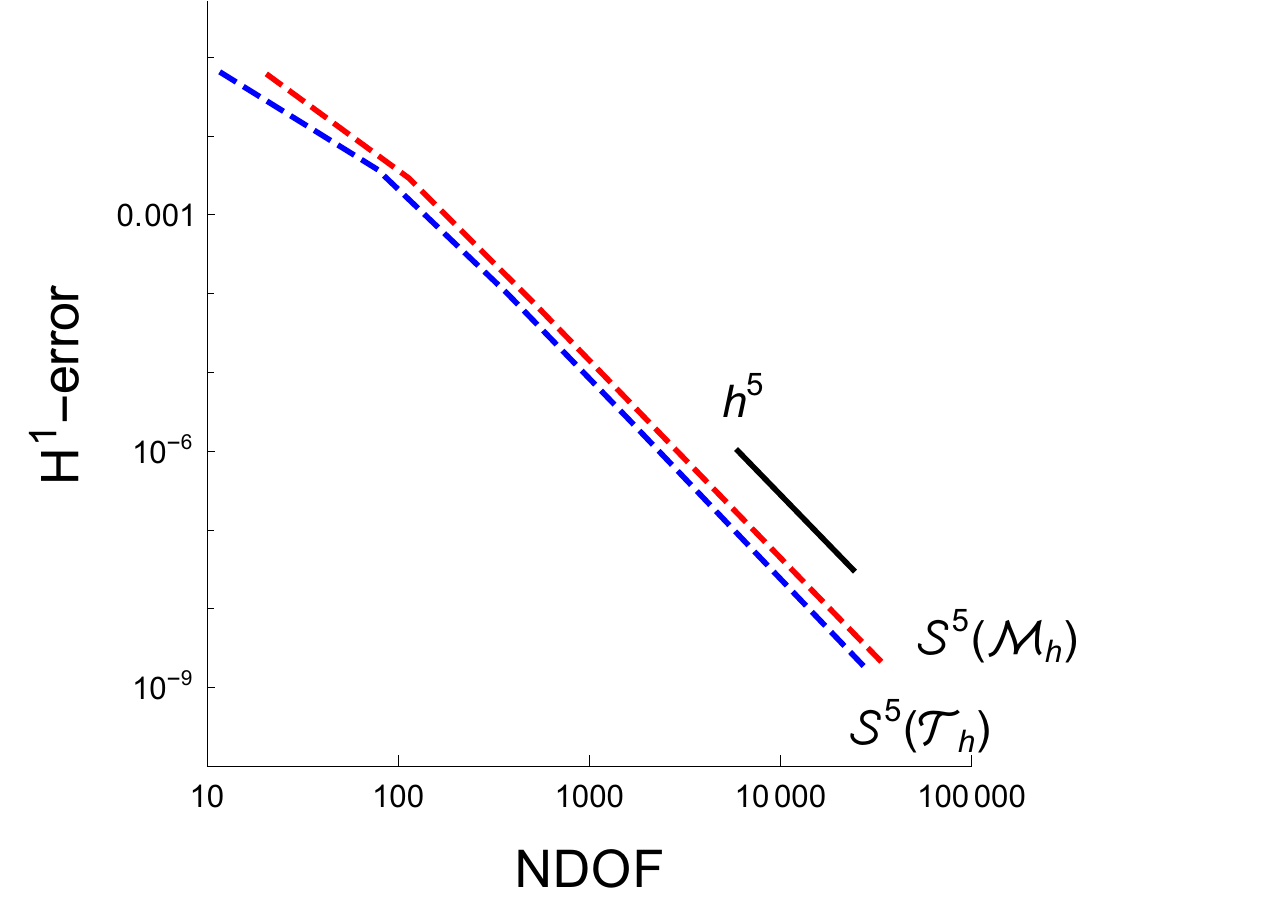} &
\includegraphics[width=.45\textwidth,clip]{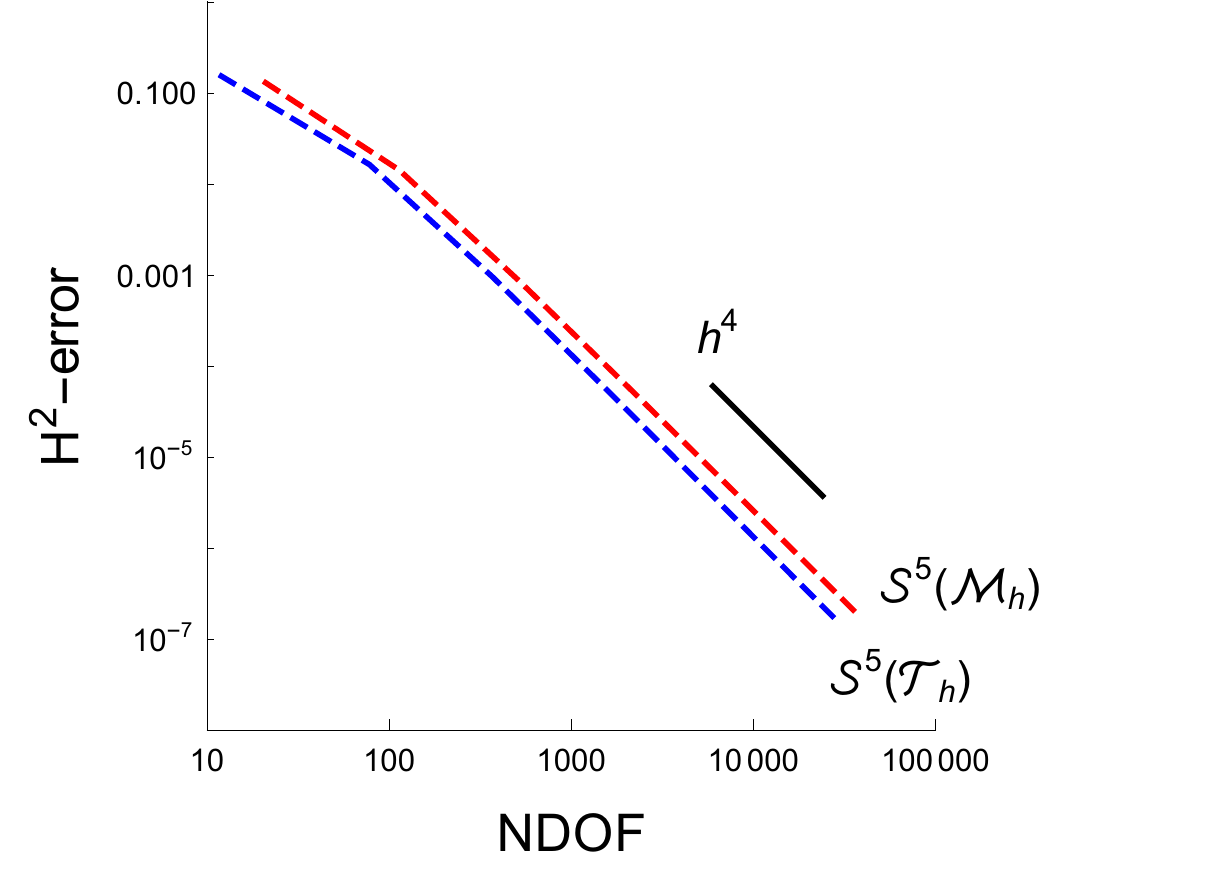} \\
Rel. $H^1$ error & Rel. $H^2$ error
\end{tabular}
\caption{Comparison of using the BS  quadrilateral spaces~$\globalspace^5(\Mesh_h)$ with the Argyris triangle spaces~$\globalspace^5(\mathcal{T}_h)$ 
for solving the biharmonic equation~\eqref{eq:problem_biharmonic} on the same computational domain defined either by a quadrilateral (top row, left) or a triangle mesh~(top row, middle). Exact solution~\eqref{eq:exact_trianglequad2} (top row, right) and the resulting $L^{\infty}$ and relative $L^{2}$, $H^1$, $H^2$-errors (middle and bottom row). 
See Example~\ref{ex:example_trianglequad}.}
\label{fig:example_trianglequad2}
\end{figure}
\end{example}

\newpage
\section{Conclusion} \label{sec:conclusion}

We have described the construction of a novel family of $C^1$ quadrilateral finite elements, extending the BS  quadrilateral construction from~\cite{BrSu05}, possessing similar degrees of freedom as the classical Argyris triangle~\cite{ArFrSc68}. The presented method allows the simple design of polynomial as well as of spline elements. Among others, we have introduced a simple and local basis for the $C^1$ quadrilateral space, and have stated for particular cases explicit formulas for the B\'{e}zier or spline coefficients of the basis functions. We have also studied several properties of the $C^1$ quadrilateral space such as the optimal approximation properties of the space. Furthermore, the  $C^1$ quadrilateral spaces are perfectly suited for solving fourth order PDEs, which has been demonstrated on the basis of several numerical examples solving the biharmonic equation on different quadrilateral meshes. 

Since the classical Argyris triangle space and the BS  quadrilateral space (and variants) presented here possess similar degrees of freedom, we are currently working on an approach to combine the $C^1$ triangle and quadrilateral element to construct a $C^1$ element for a mixed triangle and quadrilateral mesh. Further topics which are worth to study are e.g. the use of the $C^1$ quadrilateral elements for solving other fourth order PDEs such as the Kirchhoff plate problem, the Navier-Stokes-Korteweg equation, problems of strain gradient elasticity, and the Cahn-Hilliard equation, or the extension of our approach to quadrilateral meshes with curved boundaries.

\section*{Acknowledgments}

The research of G. Sangalli is partially supported by the European Research Council through the FP7 Ideas Consolidator Grant \emph{HIGEOM} n.616563, and
by the Italian Ministry of Education, University and Research (MIUR)
through the  ``Dipartimenti di Eccellenza Program (2018-2022) - Dept. of Mathematics, University of Pavia''. The research of M. Kapl is partially supported by the Austrian Science Fund (FWF) through the project P~33023. The research of T. Takacs is partially supported by the Austrian Science Fund (FWF) and the government of Upper Austria through the project P~30926-NBL. This support is gratefully acknowledged. 

\providecommand{\bysame}{\leavevmode\hbox to3em{\hrulefill}\thinspace}
\providecommand{\MR}{\relax\ifhmode\unskip\space\fi MR }
\providecommand{\MRhref}[2]{%
  \href{http://www.ams.org/mathscinet-getitem?mr=#1}{#2}
}
\providecommand{\href}[2]{#2}

\end{document}